\documentclass[12pt]{amsart}

\usepackage{amsmath}
\usepackage{amssymb}
\usepackage{amsfonts}
\usepackage{amsthm}
\usepackage{enumerate}
\usepackage{color}
\usepackage{psfrag}
\usepackage{mathrsfs}
\usepackage{amscd}
\usepackage[hyperindex,backref=page]{hyperref}
\usepackage{graphicx,color}
\usepackage{tikz} 
\usepackage{enumitem}
\usepackage[utf8]{inputenc}

\newtheorem{thm}{Theorem}[section]
\newtheorem{cor}[thm]{Corollary}
\newtheorem{lemma}[thm]{Lemma}
\newtheorem{prop}[thm]{Proposition}

\theoremstyle{definition}

\newtheorem{ex}[thm]{Example}

\newtheorem*{theorem-symbolic Rees}{Theorem \ref{symbolic Rees}}  
\newtheorem*{theorem-limDepth}{Theorem \ref{limDepth}}  
\newtheorem*{maintheorems}{Theorems \ref{degree-six},\ref{degree-four},\ref{degree-three},\ref{degree-two}}

\theoremstyle{remark}

\DeclareMathOperator{\pd}{proj.dim}

\DeclareMathOperator{\depth}{depth}

\DeclareMathOperator{\reg}{reg}
\DeclareMathOperator{\lex}{lex}

\DeclareMathOperator{\set}{set}
\DeclareMathOperator{\Min}{Min}
\def\m{\mathfrak{m}}

\begin{document}


\title[symbolic Rees algebras of complementary edge ideals]{symbolic Rees algebras of complementary edge ideals}

\author{Antonino Ficarra, Somayeh Moradi, and Yuji Muta}

\address[A. Ficarra]{BCAM – Basque Center for Applied Mathematics, Mazarredo 14, 48009 Bilbao, Basque Country – Spain, Ikerbasque, Basque Foundation for Science, Plaza Euskadi 5, 48009 Bilbao, Basque Country – Spain}
\email{aficarra@bcamath.org, antficarra@unime.it}

\address[S. Moradi]{ Department of Mathematics, Faculty of Science, Ilam University, P.O.Box 69315-516, Ilam, Iran}
\email{so.moradi@ilam.ac.ir}

\address[Y. Muta]{Department of Mathematics, Okayama University, 3-1-1 Tsushima-naka, Kita-ku, Okayama 700-8530 Japan.}
\email{p8w80ole@s.okayama-u.ac.jp}

\begin{abstract}
Let $G$ be a finite simple graph on $[n]$ and let $I_c(G)$ denote its complementary edge ideal in the polynomial ring $S = K[x_1,\dots,x_n]$. We give a combinatorial description, in terms of the structure of $G$, of the minimal generators of the symbolic Rees algebra
$\mathcal{R}_s(I_c(G)) = \bigoplus_{k \geq 0} I_c(G)^{(k)} t^k$, and show that this algebra is generated in degree at most $6$. Moreover, we completely determine the minimal generators of $\mathcal{R}_{s}(I_{c}(G))$ in graph-theoretic terms. We then study in more detail the homological invariants of the symbolic powers $I_c(G)^{(k)}$ for the classes of cycle graphs and complete multipartite graphs. For theses families, we study the behavior of the symbolic depth function  $k\mapsto\depth S/I_c(G)^{(k)}$, we obtain the  limit depth of the symbolic powers and the Waldschmidt constant of $I_c(G)$, and further prove that all the symbolic powers $I_c(G)^{(k)}$ are componentwise linear.  
\end{abstract}


\subjclass[2020]{Primary: 13F55, 13H10, 05C75; Secondary: 13D45, 05C90, 55U10}


\keywords{symbolic power, symbolic Rees algebra, complementary edge ideal, depth, componentwise linearity}


\thanks{}
	

\maketitle


\section{Introduction}

Complementary edge ideals constitute a relatively new class of squarefree monomial ideals that naturally extend the interplay between commutative algebra and graph theory.  
Let $G$ be a finite simple graph on the vertex set $V(G)=[n]$ and 
let  $S=K[x_1,\ldots,x_n]$ be the polynomial ring over a field $K$. The {\it complementary edge ideal} of $G$ is defined as $$I_{c}(G)=\left(\frac{x_{1}x_{2}\cdots x_{n}}{x_{i}x_{j}}: \, \{i,j\}\in E(G)\right).$$ This class of monomial ideals was independently introduced in  \cite{fm2025} and \cite{hqm2026}, where a detailed relationship between the algebraic and homological properties of these ideals and the combinatorial structure of their underlying graphs was established. Every squarefree monomial ideal of $S$ generated in degree $n-2$ arises as a complementary edge ideal, and here is what makes this class both natural and until recently,  unexplored: a classical theorem of Herzog, Hibi, and Zheng \cite{HHZ} shows that a squarefree monomial ideal  with a $2$-linear resolution has linear powers. In a classification of the analogous question for general degree $d$, the first and the second author showed in \cite{fmStanley-Reisner2025} that the equivalence ``linear resolution $\iff$ linear powers", uniform across the base field, holds precisely for $d\in\{0,1,2,n-2,n-1,n\}$. Beyond the trivial cases and the classical case $d=2$, the case $d=n-2$ singles out, as the one genuinely new family this classification points to.

Algebraic properties of complementary edge ideals and their ordinary powers have been studied in several papers. In \cite{fm2025} and \cite{hqm2026} combinatorial characterizations have been obtained for important properties such as Cohen--Macaulayness, Gorensteinness, sequentially Cohen--Macaulayness, matroidality, levelness, and linear resolutions of complementary edge ideals. Moreover, in \cite{fm2025} the regularity of ordinary powers of these ideals were described explicitly in terms of graph invariants. Continuing this line of study, the first and the second author studied the structure of the ordinary Rees algebra of complementary edge ideals, its Koszulness and normality criteria in terms of the combinatorial structure of the underlying graph in \cite{fmRees2025}. They also determined the asymptotic behavior of the depth of ordinary powers and a bound for the corresponding depth stability index. Lu, Wang, and Zhu \cite{lwz2025} studied the projective dimension of ordinary powers and the homological shift algebra of complementary edge ideals, and the first author \cite{f2026} recently gave a complete determination of the associated primes of all ordinary powers of this ideal,  showed that they satisfy the persistence property, and further computed the $v$-function of these ideals. Roy and Saha \cite{rs2025} determined the conditions for equality of the ordinary and symbolic powers $I_c(G)^k=I_c(G)^{(k)}$ for all $k$, and Bhardwaj, Chau, and Javadekar~\cite{bcj2026} and Lama~\cite{l2026} characterized licci complementary edge ideals. 

This body of work has so far been concerned mainly with the ordinary powers and the ordinary Rees algebra of $I_c(G)$. Apart from the characterization of the equality between ordinary and symbolic powers obtained in \cite{rs2025}, the symbolic powers of complementary edge ideals and their symbolic Rees algebras remain largely unexplored. 
In this paper, we aim to fill this gap. We describe the generators of the symbolic Rees algebra of $I_c(G)$ for an arbitrary graph $G$ in terms of the graph structure explicitly, and furthermore for the families of cycle graphs and complete multipartite graphs we study homological invariants and properties of symbolic powers of  $I_c(G)$. The regularity and depth of symbolic powers $I_c(G)^{(k)}$ are also studied by the authors in the work~\cite{fmm2026}.    

The symbolic Rees algebra $\mathcal{R}_s(I)=\bigoplus_{k\geq 0}I^{(k)}t^k$ of an ideal $I$ in a Noetherian ring encodes the entire family of symbolic powers and provides a natural algebraic framework for studying the asymptotic and homological behavior of these powers. Nagata's early examples \cite{n1959} already signaled the subtlety of symbolic powers, giving an ideal $I$ in a polynomial ring whose symbolic Rees algebra is not finitely generated. Roberts \cite{R1985} later exhibited a prime ideal in a polynomial ring whose symbolic Rees algebra fails to be Noetherian. 
Against this backdrop, for monomial ideals the situation is far more favorable. In their foundational work, Herzog, Hibi, and N.V. Trung \cite{HHT} proved that the symbolic Rees algebras of monomial ideals are always finitely generated, and if further the ideal is squarefree, the algebra is normal, and Cohen-Macaulay (see also \cite{{l1988}}). An active line of research is then to investigate the generators of the algebra $\mathcal{R}_s(I)$ in terms of the combinatorial objects underlying the given monomial ideal $I$.  
Symbolic Rees algebras of edge ideals and cover ideals of graphs are well studied in the literature, initiated by the work of Simis, Vasconcelos, and Villarreal \cite{SVR},
see for example ~\cite{DV2010,HHT,MRV,MGR2017,V2008}. Notably, Villarreal~\cite{V2008} showed that for any perfect graph $G$, the minimal generators of $\mathcal{R}_{s}(I(G))$ of degree $k$
are in one to one correspondence with the cliques (complete subgraphs) of $G$ of size $k+1$. 

In the case of complementary edge ideals, however, a remarkable phenomenon occurs: independently of the size of the graph $G$, the symbolic Rees algebras  $\mathcal{R}_{s}(I_{c}(G))$ is generated in degree at most 6, and that this bound is sharp.
More precisely, building on Lemmas~\ref{function lemma1} and~\ref{function lemma2}, we show the following theorem as one of the main results of the paper. 

\begin{theorem-symbolic Rees} 
Let $G$ be a graph on the vertex set $[n]$. Then we have 
$$I_{c}(G)^{(k)}=\displaystyle\sum_{t=1}^{6}I_{c}(G)^{(t)}I_{c}(G)^{(k-t)}\mbox{ for all }k\geq7.$$
In particular, the graded $S$-algebra $\mathcal{R}_{s}(I_{c}(G))$ is generated in degree at most 6. More precisely, we have $$\mathcal{R}_{s}(I_{c}(G))=S[I_{c}(G)t,I_{c}(G)^{(2)}t^2, I_{c}(G)^{(3)}t^3,I_{c}(G)^{(4)}t^4, I_{c}(G)^{(6)}t^6].$$
\end{theorem-symbolic Rees}

The proof of Theorem~\ref{symbolic Rees} plays a crucial role in determining an explicit description of the generators of $\mathcal{R}_{s}(I_{c}(G))$. In  
Theorems~\ref{degree-six},~\ref{degree-four},~\ref{degree-three}, and~\ref{degree-two} we then exhibit the precise description of the minimal generators of $\mathcal{R}_{s}(I_{c}(G))$ of different degrees, in terms of graph-theoretic information. 

\begin{maintheorems}
	Let $G$ be a graph on $[n]$. Then
    \begin{enumerate}
        \item[(a)] $ut^6$ is a minimal generator of  $\mathcal{R}_{s}(I_{c}(G))$   if and only if $u={\bf x}_A^2{\bf x}_B^3{\bf x}_C^4$, where  $A$ is a clique of $G$ with $|A|\geq 3$, $B=\{j\in[n]\setminus A:\, \{i,j\}\in E(G) \text{ for any } i\in A\}\neq \emptyset,$ $(G_{B})^c$ is a non-bipartite graph without isolated vertices,  $C=[n]\setminus (A\cup B)$, and one of the following conditions holds: 
	\begin{enumerate}
		\item [\textup{(i)}] $|A|\geq 4$.
		\item [\textup{(ii)}]   $|A|=3$ and  $G_{A}$ is not a split-dominating triangle of $G$.
	\end{enumerate}  
        
        \item[(b)] $ut^4$ is a minimal generator of  $\mathcal{R}_{s}(I_{c}(G))$ if and only if $u=x_rx_s{\bf x}_B^2{\bf x}_C^3$, where $\{r,s\}\in E(G)$ is not a dominating edge of $G$, $B=N_G(r)\cap N_G(s)\neq \emptyset$, $B$ is not a clique of $G$ and $C=[n]\setminus (\{r,s\}\cup B)$.
        
        \item[(c)] $ut^3$ is a minimal generator of  $\mathcal{R}_{s}(I_{c}(G))$ if and only if $u={\bf x}_A{\bf x}_B^2$, where $A$ is a maximal clique of $G$  and $B=[n]\setminus A$ such that either $|A|\geq 4$, or $|A|=3$ and $A$ contains no cone vertex. 

\item[(d)] $ut^2$ is a minimal generator of  $\mathcal{R}_{s}(I_{c}(G))$ if and only if $u$ is of one of the following  forms:
\begin{enumerate}
    \item[\textup{(i)}]  $u={\bf x}_A{\bf x}_B^2$, where $A=N_G(i)$ for some vertex $i$ with $\deg_G(i)\geq 3$ and $B=[n]\setminus N_G[i]$.  
    \item[\textup{(ii)}] $u={\bf x}_{[n]}$, if $G$ has no cone, and if the matching number of $G$ is one, when $n=4$.   
\end{enumerate}   
    \end{enumerate}
\end{maintheorems}

Consequently, Corollaries~\ref{no-degree-six},~\ref{no-degree-four}, and~\ref{no-degree-three} provide explicit criteria, expressed in terms of the existence of certain subgraph structures, for the absence of minimal generators in degrees 
$6$, $4$ and $3$, respectively.
Moreover, Corollary~\ref{degree-at-most-two} determines combinatorial conditions under which $\mathcal{R}_{s}(I_{c}(G))$ is generated in degree at most $2$.
Finally, in Corollary~\ref{equlaityPowes}, we recover~\cite[Theorem 2.9]{rs2025}, which characterizes graphs $G$ for which $I_c(G)^{(k)}=I_c(G)^k$ for all $k\geq 1$, which is the case when  $\mathcal{R}_{s}(I_{c}(G))$ is standard graded. 

\smallskip

In Section~\ref{cycles}, we concentrate on cycle graphs, and in Theorem~\ref{main-cycle} we determine several algebraic invariants of symbolic powers of $I=I_c(G)$ when $G=C_n$ is a cycle on $n$ vertices. Namely, we give explicit descriptions for the graded Betti numbers of the symbolic powers $I^{(k)}$.
Moreover, we show that $\depth S/I^{(k)}=\lim_{k\rightarrow\infty}\depth S/I^{(k)}=n-3$, for $n\geq 4$ and all $k\ge1$. 
For any $n\geq5$,  the Hilbert series of the symbolic fiber $\mathcal{F}_s(I)=\mathcal{R}_s(I)/\mathfrak{m}\mathcal{R}_s(I)$  is shown to be of the form $$
        F(\mathcal{F}_{s}(I),z)=\dfrac{1+(n-2)z+z^2}{(1-z)^{3}(1+z)}.
        $$
Furthermore, the Waldschmidt constant of $I$ is obtained, and it is shown that $I^{(k)}$ has linear quotients for all $k\geq 1$. 

Section~\ref{CompleteBipartiteG} is devoted to the study of the symbolic powers $I_c(G)^{(k)}$, when $G$ is a complete multipartite graph. The main objective of this section is the behavior of the depth function $k\mapsto\depth S/I_c(G)^{(k)}$, the Waldschmit constant and the componentwise linearity of symbolic powers of $I_c(G)$. 
Firstly, 
Theorem~\ref{multipartitre-Rees} presents the structure of $\mathcal{R}_{s}(I_{c}(G))$ for this family of graphs. When $I$ is a squarefree monomial ideal, then it follows from \cite[Theorem 4.7]{HT2010}  that  $\depth S/I^{(k)}$ stabilizes for large enough $k$. Furthermore, it was shown in \cite[Theorem 2.4]{hktt2017} that  this stable value is equal to $\dim(S)-\ell_s(I)$, where $\ell_s(I)$ denotes the symbolic analytic spread of $I$, that is the Krull dimension of the fiber $\mathcal{F}_s(I)$. By interpreting $\ell_s(I)$ as the Krull dimension of an appropriate Veronese subalgebra of $\mathcal{F}_s(I)$, and studying the Hiblert polynomial of this subalgebra for $I=I_c(G)$, we obtain

\begin{theorem-limDepth}
Let $G=K_{d_1,\dots,d_m}$ be a complete multipartite graph on $n\geq 3$ vertices. Let $s=|\{i\,\,: d_{i}=1\}|$ and $t=|\{i\,\,: d_{i}\geq2\}|$. Then 
$$\lim_{k\rightarrow\infty}\depth S/I_{c}(G)^{(k)}=\begin{cases}
n-m-1, & \text{if } d_i\geq 2 \text{ for all } i, \\
n-t-\min\{3,s\}, & \text{otherwise}. 
\end{cases}$$ 
Moreover, this value is a lower bound for  $\depth S/I_{c}(G)^{(k)}$ for all $k\geq 1$.
\end{theorem-limDepth}

In Theorem~\ref{LQ of complete multi} we show that all the symbolic powers $I_{c}(G)^{(k)}$ have linear quotients, where $G=K_{d_1,\dots,d_m}$. This result gives enough tools to determine the sequences of depth of symbolic powers of $I_{c}(G)$ under the assumption that $d_{i}\geq2$ for all $i$ (see Proposition~\ref{depth of complete multi}) as follows.
\begin{equation}\label{depthSequence}
\depth S/I_{c}(G)^{(k)}=
\begin{cases}
n-3, & \text{if } k=1,2,4, \\
n-m-1, & \text{otherwise}. 
\end{cases}   
\end{equation}

It is natural to ask whether the symbolic depth function is non-increasing. For cover ideals of graphs, this is known to be the case \cite[Theorem 3.2]{hktt2017}. On the other hand, the corresponding statement fails for squarefree monomial ideals in general \cite{nt2019,mt2025}, while it remains an open question for symbolic powers of edge ideals. Our formula \eqref{depthSequence} shows that monotonicity also fails for complementary edge ideals. Indeed, if $m\geq 3$, then we have $$\depth S/I_{c}(G)^{(3)}=n-m-1<n-3=\depth S/I_{c}(G)^{(4)}.$$

Finally, Proposition \ref{Waldschmit-constant} determines the Waldschmidt constant of $I_c(G)$ in terms of the number of vertices of $G$. The proof relies crucially on the characterization of the minimal monomial generators of $I_{c}(G)^{(k)}$.


\section{Preliminaries}
In this section, we review  definitions and known results from combinatorial commutative algebra that will be used throughout the paper. We refer the reader to \cite{BHBook, HHBook, VBook} for the detailed information on combinatorial commutative algebra. 

Throughout this paper, $G = (V(G), E(G))$ denotes a finite simple graph on vertex set $V(G) = [n] = \{1,\dots,n\}$, without isolated vertices, and $S = K[x_1,\dots,x_n]$ denotes the polynomial ring over a field $K$.  

We write $G^c$ for the complement of $G$, i.e.\ the graph on $[n]$ with $\{i,j\} \in E(G^c)$ if and only if $\{i,j\} \notin E(G)$.
For any $i\in [n]$, we set $N_G(i)=\{j\in [n]:\; \{i,j\}\in E(G)\}$ and $N_G[i]=N_G(i)\cup\{i\}$. The induced subgraph of $G$ on a subset $A\subset [n]$ is denoted by $G_A$.

For $F \subseteq [n]$ we write ${\textbf x}_F = \prod_{i \in F} x_i$. 	
The \emph{complementary edge ideal} of $G$ is the squarefree monomial ideal
 	\[
 	I_c(G) = \big({\textbf x}_{[n]\setminus\{i,j\}} : \{i,j\} \in E(G)\big) \subseteq S.
 	\]

Let $\mathcal{T}(G)$ denotes the set of all triangles ($K_3$-subgraphs) of $G$. The following result determines the minimal primary decomposition of a complementary edge ideal. 

\begin{thm}[\cite{fm2025}, Theorem 2.1, \cite{hqm2026}, Theorem 1.1]\label{min prime}
Let $G$ be a finite simple graph without isolated vertices. Then 
$$I_{c}(G)=(\bigcap_{e\in E(G^{c})}P_{e})\cap(\bigcap_{T\in\mathcal{T}(G)}P_{T}),$$
where $P_A=(x_i:\, i\in A)$ for any $A\subseteq [n]$. 
\end{thm}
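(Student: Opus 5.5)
Since $I_c(G)$ is squarefree, it equals the intersection of its minimal primes, so the plan is to determine these primes. A monomial prime $P_A$ contains $I_c(G)$ exactly when every generator ${\bf x}_{[n]\setminus\{i,j\}}$, with $\{i,j\}\in E(G)$, has a variable in $A$. This means $A\not\subseteq\{i,j\}$ for every edge $\{i,j\}$. In particular $A$ is nonempty, and every associated prime of $I_c(G)$ is a minimal prime of this form. So the whole statement reduces to a combinatorial description of the minimal subsets $A\subseteq[n]$ that are contained in no edge of $G$.

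We treat these minimal sets by size.
\begin{enumerate}
\item[(1)] If $|A|=1$, say $A=\{i\}$, the condition fails, because $i$ lies on some edge (there are no isolated vertices).
\item[(2)] If $|A|=2$, the condition holds precisely when $A$ is not an edge, that is, $A=e\in E(G^c)$. Such an $A$ is automatically minimal, since its singletons fail by (1).
\item[(3)] If $|A|\ge 3$, then $A$ is never contained in a $2$-element set, so the condition always holds. Minimality forces every $2$-subset of $A$ to fail the condition, so every pair in $A$ is an edge of $G$. Conversely, take a $3$-set all of whose pairs are edges; its $2$-subsets and singletons all fail, so it is minimal.
\end{enumerate}

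It remains to show that no minimal set has $|A|\ge 4$. If $|A|\ge4$, then any $3$-subset $A'\subset A$ already satisfies the condition, so $A$ is not minimal. Hence the minimal sets are exactly the non-edges of $G$ (the edges of $G^c$) and the vertex sets of triangles of $G$. Intersecting the corresponding primes $P_e$ and $P_T$ gives the stated decomposition, which is irredundant because the minimal sets are pairwise incomparable.

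The only subtle point is the role of the hypothesis that $G$ has no isolated vertices. It is used only in step (1), to exclude primes $(x_i)$. Without it, an isolated vertex $i$ would give ${\bf x}_{[n]\setminus\{i,j\}}\in(x_i)$ for every edge $\{i,j\}$ not containing $i$, which is all edges, so $(x_i)$ would be a minimal prime. Apart from this, the argument is elementary.
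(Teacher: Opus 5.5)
The paper gives no proof of this statement; it quotes it from the two cited sources. There is nothing in the paper to compare your argument against, so it has to stand on its own, and it does.

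Your proof is correct and complete. For a squarefree monomial ideal, the minimal primes are exactly the $P_A$ with $A$ minimal among the sets meeting the support of every generator. Your case analysis identifies these minimal sets precisely:
\begin{itemize}
\item $|A|=1$ is excluded because there are no isolated vertices;
\item $|A|=2$ gives exactly the non-edges;
\item $|A|=3$ gives exactly the triangles;
\item $|A|\ge 4$ is never minimal.
\end{itemize}
Equivalently, you have shown that the Alexander dual of $I_c(G)$ is generated by the ${\bf x}_e$ with $e\in E(G^c)$ and the ${\bf x}_T$ with $T\in\mathcal{T}(G)$.

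Two cosmetic points:
\begin{itemize}
\item In your last paragraph, ``every edge $\{i,j\}$ not containing $i$'' should read ``every edge $\{j,\ell\}$ of $G$, none of which contains $i$.''
\item In the degenerate case $G=K_2$ one has $I_c(G)=S$, and there is no admissible $A$ at all. Both sides are then $S$ under the empty-intersection convention. Your argument handles this, but it is worth saying explicitly.
\end{itemize}
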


Let $I \subseteq S$ be an ideal, and let $\operatorname{Ass}(I)$ denote the set of its associated primes. For $k\in \mathbb{N}$, the $k$-th \emph{symbolic power} of $I$ is defined to be
 	\[
 	I^{(k)} = \bigcap_{P \in \operatorname{Ass}(I)} \big(I^k S_P \cap S\big).
 	\]
We note that an alternative definition of symbolic powers exists, obtained by replacing the set of associated primes $\operatorname{Ass}(I)$ with the set of minimal primes $\operatorname{Min}(I)$ in the above intersection. When 
$I$ has no embedded primes, as is the case for squarefree monomial ideals, the two definitions coincide.  For a squarefree monomial ideal $I$ with the set of minimal prime ideals $\operatorname{Min}(I) = \{P_1,\dots,P_r\}$, the symbolic powers admit the explicit description $I^{(k)} = P_1^{k} \cap \cdots \cap P_r^{k}$.
This together with Theorem~\ref{min prime} yields

\begin{equation}\label{sym-comp}
I_{c}(G)^{(k)}=(\bigcap_{e\in E(G^{c})}P_{e}^{k})\cap(\bigcap_{T\in\mathcal{T}(G)}P_{T}^{k}).
\end{equation}

The \emph{symbolic Rees algebra} of an ideal $I \subseteq S$ is the graded $S$-subalgebra of $S[t]$
 	\[
 	\mathcal{R}_s(I) = \bigoplus_{k \geq 0} I^{(k)} t^k \subseteq S[t],
 	\]
 with the convention $I^{(0)} = S$. By a theorem of Lyubeznik \cite{l1988}, if $I$ is a squarefree monomial ideal then $\mathcal{R}_s(I)$ is a finitely generated $K$-algebra, generated by a unique minimal set of monomials of $S[t]$. This fact was later  extended to all monomial ideals by Herzog, Hibi, and N.V. Trung \cite{HHT} via Gordan's lemma.
It follows that for a monomial ideal $I$,   $\mathcal{F}_s(I)$ is finite dimensional. The {\em symbolic analytic spread} of $I$ is then defined to be $\ell_s(I)=\dim \mathcal{F}_s(I)$.

When $I$ is a squarefree monomial ideal, it is known by \cite[Theorem 4.7]{HT2010}  that  $\depth S/I^{(k)}$ stabilizes for large enough $k$. The stable value of this depth sequence is called the {\em limit depth} of symbolic powers of $I$. The following result by Hoa, Kimura, Terai and T.N. Trung \cite[Theorem 2.4]{hktt2017} will be used in the paper.
\begin{thm}\label{limit}
 Let $I\subset S$ be a squarefree monomial ideal. Then
\begin{enumerate}
 \item[\textup{(1)}] $\depth S/I^{(k)}\geq \dim(S)-\ell_s(I)$ for all $k\geq 1$. 
\item [\textup{(2)}]$\lim_{k\rightarrow\infty}\depth S/I^{(k)}=\dim(S)-\ell_s(I)$.
\end{enumerate}
\end{thm}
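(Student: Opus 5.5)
The plan is to prove both parts at once by studying the associated graded ring of the symbolic filtration. Set $\mathcal{R}=\mathcal{R}_s(I)$ and
$$G=\mathcal{R}/\mathcal{R}(1)=\bigoplus_{k\geq 0} I^{(k)}/I^{(k+1)},$$
where $\mathcal{R}(1)=\bigoplus_k I^{(k+1)}t^k$. Then $\mathcal{F}_s(I)=\mathcal{R}/\mathfrak{m}\mathcal{R}$, and one also has $\mathcal{F}_s(I)=G/\mathfrak{m}G$. We may assume $K$ is infinite: depth and $\ell_s$ are unchanged under the faithfully flat extension $K\to K(u)$.

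\emph{Step 1: $G$ is Cohen--Macaulay of dimension $n$.} Since $I$ is squarefree monomial, $\mathcal{R}$ is a finitely generated normal affine semigroup ring. This is \cite{HHT}, \cite{l1988}: it is the ring of lattice points of a rational cone. By Hochster's theorem $\mathcal{R}$ is Cohen--Macaulay of dimension $n+1$. Choose $d$ so that the Veronese $\mathcal{R}^{(d)}$ is standard graded, i.e.\ $I^{(dk)}=J^k$ with $J=I^{(d)}$. Then $\mathcal{R}$ is a finite module over $\mathcal{R}^{(d)}$, so $\dim \mathcal{F}_s(I)=\dim \mathcal{F}(J)=\ell(J)$.

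To pass from $\mathcal{R}$ to $G$, I would use the extended symbolic Rees algebra $\mathcal{R}'=\mathcal{R}[t^{-1}]$. It is again Cohen--Macaulay, being normal and semigroup-like. Moreover $t^{-1}$ is a regular element on $\mathcal{R}'$ with $\mathcal{R}'/t^{-1}\mathcal{R}'\cong G$. Hence $G$ is Cohen--Macaulay of dimension $n$.

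\emph{Step 2: the lower bound (1).} Because $G$ is Cohen--Macaulay and graded,
$$\operatorname{grade}(\mathfrak{m}G)=\operatorname{ht}(\mathfrak{m}G)=\dim G-\dim G/\mathfrak{m}G=n-\ell_s(I).$$
The ideal $\mathfrak{m}G$ is generated by degree-$0$ elements, namely the images of $x_1,\dots,x_n$. Since $K$ is infinite, prime avoidance lets us choose general linear forms $y_1,\dots,y_r\in S_1$, with $r=n-\ell_s(I)$, forming a $G$-regular sequence. These elements have degree $0$ in $G$, so they form a regular sequence on every graded component $I^{(k)}/I^{(k+1)}$, viewed as an $S$-module. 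Hence $\depth I^{(k)}/I^{(k+1)}\geq r$ for all $k$. Now induct on $k$ using the exact sequences
$$0\to I^{(k)}/I^{(k+1)}\to S/I^{(k+1)}\to S/I^{(k)}\to 0,$$
starting from $S/I^{(1)}$, which is itself the degree-$0$ component of $G$. The depth lemma then gives $\depth S/I^{(k)}\geq r$ for every $k\geq1$.

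\emph{Step 3: the limit (2).} By \cite[Theorem 4.7]{HT2010} the limit exists, so it suffices to show that $\depth S/I^{(k)}\leq r$ for infinitely many $k$; I take $k=dm$. For the standard graded ideal $J$, Brodmann's theorem gives $\depth S/J^m\leq n-\ell(J)$ for $m\gg0$, and $\ell(J)=\ell_s(I)$ by Step 1. Together with Step 2, the sequence is eventually equal to $n-\ell_s(I)$.

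I expect the main obstacle to be Step 1, namely transferring Cohen--Macaulayness from $\mathcal{R}$ to $G$. One has to check that the extended algebra $\mathcal{R}[t^{-1}]$ is again a normal semigroup ring, or else argue directly via $\mathcal{R}(1)$ and the sequence $0\to\mathcal{R}(1)\to\mathcal{R}\to G\to 0$ as in Eisenbud--Huneke. The rest is routine bookkeeping with regular sequences and depth.
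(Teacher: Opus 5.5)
The paper does not prove this statement; it quotes it from \cite[Theorem 2.4]{hktt2017}, so there is no in-paper argument to compare against. Your proposal is a correct, self-contained proof. It follows the Eisenbud--Huneke mechanism:
\begin{itemize}
\item Cohen--Macaulayness of the symbolic associated graded ring $G$ gives the uniform lower bound in (1), via a $G$-regular sequence of linear forms and the depth lemma along $0\to I^{(k)}/I^{(k+1)}\to S/I^{(k+1)}\to S/I^{(k)}\to 0$.
\item Brodmann's inequality for the Veronese ideal $J=I^{(d)}$ gives $\depth S/I^{(dm)}\le n-\ell_s(I)$ for $m\gg 0$.
\item Existence of the limit is taken from \cite{HT2010}, as the paper itself does.
\end{itemize}
Compared with a bare citation, your route shows that the squarefree hypothesis enters only through normality of the extended symbolic Rees algebra.

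The step you flagged as the main obstacle is actually immediate. The monomials of $\mathcal{R}[t^{-1}]=\bigoplus_{k\in\mathbb{Z}}I^{(k)}t^k$, with $I^{(k)}=S$ for $k\le 0$, are exactly the lattice points of the rational cone $\{(a,k)\in\mathbb{R}^{n+1}: a_i\ge 0,\ \sum_{i\in P}a_i\ge k \text{ for all } P\in\Min(I)\}$. For $k\le 0$ the second constraint is vacuous. So Hochster's theorem applies directly to $\mathcal{R}[t^{-1}]$, and you never need Cohen--Macaulayness of $\mathcal{R}$ itself.

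Three assertions you state without justification each deserve a line:
\begin{itemize}
\item The identification $G/\mathfrak{m}G=\mathcal{F}_s(I)$ requires $I^{(k+1)}\subseteq\mathfrak{m}I^{(k)}$. This holds because for any monomial $u\in I^{(k+1)}$ and any $x_i\mid u$, one has $u/x_i\in I^{(k)}$.
\item The equality $\dim G=n$ holds because $\mathcal{R}[t^{-1}]$ is an affine domain of dimension $n+1$ and $t^{-1}$ is a nonzero nonunit.
\item The formula $\operatorname{ht}(\mathfrak{m}G)=\dim G-\dim G/\mathfrak{m}G$ should be read in the $x$-degree grading. There $G_0=K$ and $\mathfrak{m}G$ is homogeneous, so you may localize at the homogeneous maximal ideal and use the local Cohen--Macaulay identity.
\end{itemize}
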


The set of minimal monomial generators of a monomial ideal $I$ is denoted by $\mathcal{G}(I)$.
A monomial ideal $I$ with   $\mathcal{G}(I)=\{u_1,\dots,u_r\}$ is said to have \emph{linear quotients} if there is an ordering $u_1,\dots,u_r$ of the generators such that, for every $2 \leq k \leq r$, the colon ideal $(u_1,\dots,u_{k-1}) : u_k$
 is generated by a subset of the variables $x_1,\dots,x_n$.  If $I$ has linear quotients with respect to an ordering $u_1,\dots,u_r$ of its minimal generators, then $I$ is componentwise linear, and its graded Betti numbers are determined explicitly by the sets $\set(u_k) = \{i : x_i \in (u_1,\dots,u_{k-1}):u_k\}$. In particular the projective dimension and Castelnuovo--Mumford regularity of $S/I$ can be read off from this combinatorial data, see~\cite[Corollary 8.2.2]{HHBook}.

For a homogeneous ideal $I \subseteq S$, the \emph{initial degree} of $I$ is $\alpha(I) = \min\{d : I_d \neq 0\}$.
 	The \emph{Waldschmidt constant} of $I$ 
    is defined as
 	\[
 	\widehat{\alpha}(I) = \lim_{k \to \infty} \frac{\alpha(I^{(k)})}{k}.
 	\]


\section{Symbolic Rees algebra of $I_{c}(G)$}\label{sym-Rees}

In this section we investigate the symbolic Rees algebra $\mathcal{R}_{s}(I_{c}(G))$. We show that it is generated in degree at most $6$ and establish its minimal generators in combinatorial terms. We further give criteria for $\mathcal{R}_{s}(I_{c}(G))$ to be generated in smaller degrees. 

The following two lemmas are essential to achieve our goal.

\begin{lemma}\label{function lemma1}
Let $k\geq7$ and $t\in[6]$. Let $f:\mathbb{N}\rightarrow\mathbb{N}$ be a map satisfying the following conditions:
\begin{enumerate}
\item[\textup{(1)}] $f(0)=0$, 
\item[\textup{(2)}] $0\leq f(s+1)-f(s)\leq1$ for all $s\in\mathbb{N}$, 
\item[\textup{(3)}] $f(s)=t$ for all $s\geq k$, 
\item[\textup{(4)}] $f(s)+f(k-s)=t$ for all $0\leq s\leq k$. 
\end{enumerate}
Then the following statements hold: 
\begin{enumerate}[label=(\alph*)]
\item[\textup{(a)}] If $x,y\in\mathbb{N}$ with $x\geq y$, then we have $f(x)-f(y)\leq x-y$. 
\item[\textup{(b)}] If $x+y\geq k$ for some $x,y\in\mathbb{N}$, then we have $t\leq f(x)+f(y)\leq t+x+y-k$. 
\item[\textup{(c)}] Suppose that $\alpha+\beta+\gamma=\ell$ and $f(\alpha)+f(\beta)+f(\gamma)=t$ for some $\alpha,\beta,\gamma\in\mathbb{N}$. If $x\geq\alpha, y\geq\beta$, and $z\geq\gamma$ for some $x,y,z\in\mathbb{N}$, then we have $t\leq f(x)+f(y)+f(z)\leq t+x+y+z-\ell$. 
\end{enumerate}
\end{lemma}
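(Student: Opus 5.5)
The whole lemma follows from two elementary consequences of hypotheses (1)--(2): $f$ is non-decreasing, and $f$ is $1$-Lipschitz. I will prove (a) first and then use it, together with monotonicity, to squeeze the sums in (b) and (c) between the value $t$ attained at a suitable ``anchor'' point and that value plus the total displacement from the anchor.

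For (a), I telescope. If $x\geq y$, then $f(x)-f(y)=\sum_{s=y}^{x-1}\bigl(f(s+1)-f(s)\bigr)$. By (2) each summand lies in $\{0,1\}$, so $0\leq f(x)-f(y)\leq x-y$. This gives (a) and also records that $f$ is non-decreasing, which I will use repeatedly.

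For (b), the plan is to choose an anchor pair $(s,k-s)$ dominated coordinatewise by $(x,y)$, so that hypothesis (4) applies. Put $s=\min\{x,k\}$, so that $0\leq s\leq k$ and $s\leq x$. I claim $k-s\leq y$. If $x\geq k$, then $k-s=0\leq y$; otherwise $s=x$ and $k-x\leq y$ because $x+y\geq k$. By monotonicity and (4), $f(x)+f(y)\geq f(s)+f(k-s)=t$. By (a) applied twice,
\[
f(x)+f(y)\leq f(s)+(x-s)+f(k-s)+\bigl(y-(k-s)\bigr)=t+x+y-k .
\]
Hypothesis (3) is not actually needed here; it is consistent with the conclusion, since for $x\geq k$ we get $f(y)\leq y$, which is (a) with $f(0)=0$.

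For (c), I read $\ell$ as defined by $\ell=\alpha+\beta+\gamma$, with the anchor triple $(\alpha,\beta,\gamma)$ given by hypothesis. The argument is identical to (b). Monotonicity gives $f(x)+f(y)+f(z)\geq f(\alpha)+f(\beta)+f(\gamma)=t$. Then (a) gives
\[
f(x)+f(y)+f(z)\leq t+(x-\alpha)+(y-\beta)+(z-\gamma)=t+x+y+z-\ell .
\]
There is no real obstacle in this lemma. The only step needing a little care is the choice of the anchor $s=\min\{x,k\}$ in (b), which makes the anchor lie in the range $0\leq s\leq k$ where (4) is valid while remaining dominated by $(x,y)$.
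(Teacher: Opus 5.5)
Your proof is correct and follows the paper's argument: telescoping for (a), and for (b) and (c) comparing with an anchor where the sum equals $t$, using monotonicity for the lower bound and (a) for the upper bound. The only difference is cosmetic. In (b) the paper treats $x\geq k$ separately via hypothesis (3) and reduces the upper bound to $x,y<k$ by symmetry, whereas your choice $s=\min\{x,k\}$ handles all cases at once through (4) at $s=k$, so (3) is indeed unnecessary there.
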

\begin{proof}
(a) Fix $x,y\in\mathbb{N}$ with $x\geq y$. Then, by condition (2) we have $$f(x)-f(y)=\sum_{r=y}^{x-1}(f(r+1)-f(r))\leq\sum_{r=y}^{x-1}1=x-y.$$

Note that, by condition (1), (a), and the assumption that $k\geq7$, it follows that $f(s)\leq s$ for any $s\in\mathbb{N}$. 

(b) Suppose that $x+y\geq k$. If $x\geq k$, then, by condition (3), we have $f(x)=t$, which implies $t\leq f(x)+f(y)$. Suppose that $x<k$. Then, since $x+y\geq k$, we have $y\geq k-x$. By (a), we have $f(y)\geq f(k-x)$, and hence, we obtain $t= f(x)+f(k-x)\leq f(x)+f(y)$ by condition (4). Thus, we get $t\leq f(x)+f(y)$. Next, we prove that $ f(x)+f(y)\leq t+x+y-k$. If $x\geq k$, then, by condition (3), we have $f(x)=t$, and thus, we obtain $f(x)+f(y)\leq t+y\leq t+x+y-k$. Hence, we may assume that $x<k$ and $y<k$. In this case, since $x+y\geq k$, we have $k-x\leq y$, which implies $f(y)-f(k-x)\leq y-(k-x)$ by (a). By condition (4), we obtain $f(x)+f(y)\leq f(x)+f(k-x)+y-(k-x)=t+x+y-k$, as required. 

(c) Fix $x\geq\alpha$, $y\geq\beta$, and $z\geq\gamma$. Then, by the assumption, we get $f(x)+f(y)+f(z)\geq f(\alpha)+f(\beta)+f(\gamma)=t$. Now, by (a), we have $f(x)-f(\alpha)\leq x-\alpha$, $f(y)-f(\beta)\leq y-\beta$, and $f(z)-f(\gamma)\leq z-\gamma$. Hence, we obtain $f(x)+f(y)+f(z)-\{f(\alpha)+f(\beta)+f(\gamma)\}\leq x+y+z-(\alpha+\beta+\gamma)$, which implies the desired inequality. 
\end{proof}

\begin{lemma}\label{function lemma2}
Let $k\geq7$ and let $S\subset[k]$ with $|S|\leq6$. Define the map $f_{S}:\mathbb{N}\rightarrow\mathbb{N}$ by
\[
f_{S}(s)=
\begin{cases}
|S\cap [s]|, & \text{if } 0\leq s<k,\\
|S|, & \text{if } s\geq k.
\end{cases}
\]
Assume that $s\in S$ if and only if $k+1-s\in S$. Then $f_{S}$ satisfies the conditions of Lemma \ref{function lemma1}, setting $t=|S|$. 
\end{lemma}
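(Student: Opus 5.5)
We verify the four conditions of Lemma~\ref{function lemma1} for $f=f_S$ with $t=|S|$ directly from the definition. Note that $t\le 6$ is needed to land in the range $t\in[6]$; if $S=\emptyset$ the statement is degenerate ($f\equiv 0$) and all conditions hold trivially. Condition (1) holds since $f_S(0)=|S\cap\emptyset|=0$. Condition (3) holds by definition for $s\geq k$.

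For condition (2), we split into cases. If $s+1<k$, then $f_S(s+1)-f_S(s)=|S\cap\{s+1\}|\in\{0,1\}$. If $s\geq k$, the difference is $0$. The only boundary case is $s=k-1$. There $f_S(k)-f_S(k-1)=|S|-|S\cap[k-1]|=|S\cap\{k\}|\in\{0,1\}$, using $S\subset[k]$.

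Condition (4) is the main point, and it is where the symmetry hypothesis enters. For $s=0$ or $s=k$ we get $f_S(0)+f_S(k)=0+|S|=t$. For $1\le s\le k-1$ both arguments lie in $[0,k)$, so
\[
f_S(s)+f_S(k-s)=|S\cap[s]|+|S\cap[k-s]|.
\]
The involution $j\mapsto k+1-j$ preserves $S$ by hypothesis, and it maps $[k-s]$ bijectively onto $\{s+1,\dots,k\}$. Hence
\[
|S\cap[k-s]|=|S\cap\{s+1,\dots,k\}|,
\]
and the sum equals $|S\cap[k]|=|S|=t$.

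The only subtle step is handling the boundary indices $s=k-1$ in (2) and $s\in\{0,k\}$ in (4), where the two branches of the definition of $f_S$ meet. In each case the consistency follows from $S\subset[k]$, so $f_S(k)=|S\cap[k]|$ would agree with the first branch anyway. This means $f_S(s)=|S\cap[\min(s,k)]|$ uniformly, and stating this reformulation first makes all the checks one-liners.
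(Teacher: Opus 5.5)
Your proposal is correct and follows the paper's proof essentially step for step. You check (1)--(3) directly, and you derive (4) from the involution $j\mapsto k+1-j$; you apply it to $S\cap[k-s]$, whereas the paper applies it to $S\cap[s]$, which is an immaterial difference.
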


\begin{proof}
We verify  conditions (1) to (4) of Lemma \ref{function lemma1}. Set $f=f_{S}$. Clearly, (1) and (3) hold. Hence, it is enough to verify that (2) and (4) hold. 

(2) Fix $s\in\mathbb{N}$. If $s< k$, then we have $f(s)=|S\cap[s]|\leq |S\cap[s+1]|\leq|S\cap[s]|+1\leq f(s)+1$, which implies that $0\leq f(s+1)-f(s)\leq1$. Also, if $s\geq k$, then we have $f(s)=|S|=f(s+1)$, and hence, the desired inequality hold. Suppose that $s=k-1$. Then, by the definition, we have $f(s)=|S\cap[k-1]|$ and $f(s+1)=f(k)=|S|$. Hence, we obtain $f(s+1)-f(s)=|S|-|S\cap[k-1]|=|S\cap\{k\}|\in\{0,1\}$, as required. 

(4) Fix  $0\leq s\leq k$. If $s=0,k$, then we have $f(s)+f(k-s)=|S|$. Suppose that $0<s<k$. Then we have $f(s)=|S\cap[s]|$ and $f(k-s)=|S\cap[k-s]|$. By the assumption that $s\in S$ if and only if $k+1-s\in S$, we have a bijection $S\cap[s]\rightarrow S\cap[k+1-s,k+2-s,\ldots,k]$, which maps $r$ to $k+1-r$ for each $r\in S\cap[s]$. Therefore, we see that $f(s)+f(k-s)=|S\cap[s]|+|S\cap[k-s]|=|S\cap[k]|=|S|$, as required. 
\end{proof}

\begin{thm}\label{symbolic Rees}
Let $G$ be a graph on the vertex set $[n]$. Then we have 
$$I_{c}(G)^{(k)}=\displaystyle\sum_{t=1}^{6}I_{c}(G)^{(t)}I_{c}(G)^{(k-t)}\mbox{ for all }k\geq7.$$
In particular, the graded $S$-algebra $\mathcal{R}_{s}(I_{c}(G))$ is generated in degree at most 6. More precisely, we have $$\mathcal{R}_{s}(I_{c}(G))=S[I_{c}(G)t,I_{c}(G)^{(2)}t^2, I_{c}(G)^{(3)}t^3,I_{c}(G)^{(4)}t^4, I_{c}(G)^{(6)}t^6].$$
\end{thm}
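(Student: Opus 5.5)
The plan is to work exponent by exponent using the primary decomposition \eqref{sym-comp}. A monomial $u={\bf x}^{a}$ lies in $I_c(G)^{(k)}$ if and only if two conditions hold:
\begin{itemize}
\item[(N)] $a_i+a_j\geq k$ for every non-edge $\{i,j\}\in E(G^c)$;
\item[(T)] $a_i+a_j+a_l\geq k$ for every triangle $\{i,j,l\}\in\mathcal{T}(G)$.
\end{itemize}
The inclusion $\supseteq$ in the displayed equality is immediate, since $I^{(t)}I^{(k-t)}\subseteq I^{(k)}$. For $\subseteq$, fix a monomial $u\in I_c(G)^{(k)}$ with $k\geq 7$. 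We may replace each $a_i$ by $\min\{a_i,k\}$, because the excess divides off harmlessly. The goal is to factor $u=vw$ with $v\in I_c(G)^{(t)}$ and $w\in I_c(G)^{(k-t)}$ for some $t\in[6]$.

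The factorization comes from Lemma~\ref{function lemma2}. Choose a set $S\subset[k]$ with $|S|=t\leq 6$ that is symmetric under $s\mapsto k+1-s$. Put $v=\prod_i x_i^{f_S(a_i)}$ and $w=u/v$. Since $0\leq f_S(a_i)\leq a_i$, both are honest monomials.

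The non-edge conditions follow from Lemma~\ref{function lemma1}(b). For a non-edge with $a_i+a_j\geq k$ it gives
$$t\leq f(a_i)+f(a_j)\leq t+a_i+a_j-k.$$
This says exactly that $v$ satisfies (N) at level $t$, and that $w$, whose exponents are $a_i-f(a_i)$, satisfies (N) at level $k-t$. Both conclusions hold independently of the choice of $S$.

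The triangle conditions are handled by Lemma~\ref{function lemma1}(c), applied with $\ell=k$. For each triangle $\{i,j,l\}$ we need a witness: $\alpha\leq a_i$, $\beta\leq a_j$, $\gamma\leq a_l$ with $\alpha+\beta+\gamma=k$ and $f(\alpha)+f(\beta)+f(\gamma)=t$. Using the symmetry relation $f(\alpha)=t-f(k-\alpha)$, the condition $f(\alpha)+f(\beta)+f(\gamma)=t$ rewrites as
$$|S\cap[1,\gamma]| = |S\cap(\beta,\beta+\gamma]|.$$
So the whole proof reduces to choosing a symmetric $S$ of size at most $6$ such that every triangle of $G$ admits such a witness.

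This choice is the main obstacle, and I would attack it by a case analysis on the profile of the truncated exponents. The relevant data are:
\begin{itemize}
\item which vertices have $a_i\geq k/2$ and which have $a_i<k/2$ (non-edges force at most one endpoint of each non-edge to lie below $k/2$, so the small-exponent vertices form a clique);
\item for that clique, the minimal triangle sums.
\end{itemize}
I would first try the simplest candidate sets in turn:
\begin{itemize}
\item $S=\{(k+1)/2\}$ when $k$ is odd, giving $t=1$;
\item $S=\{k/2,k/2+1\}$ or $S=\{1,k\}$, giving $t=2$;
\item $S=\{1,(k+1)/2,k\}$, giving $t=3$;
\end{itemize}
and then sets of size $4$ and $6$ built from $1,k$ and points near $k/3$, $k/2$ and $2k/3$. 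The points near thirds should handle triangles whose three exponents are all close to $k/3$, which is what a clique of small-exponent vertices forces. The claim to verify is that one of these choices always admits witnesses $(\alpha,\beta,\gamma)$ for every triangle. A good guide for which $S$ to pick is to start from a degree-$6$ pattern ${\bf x}_A^2{\bf x}_B^3{\bf x}_C^4$ and scale it to level $k$.

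For the ``more precisely'' part, note that the function lemmas use $k\geq 7$ only to guarantee $f(s)\leq s$, so the same machinery applies to $k=5$. There I would show $I^{(5)}=\sum_{t=1}^{4}I^{(t)}I^{(5-t)}$ by exhibiting an admissible symmetric $S\subset[5]$, such as $\{3\}$, $\{1,5\}$ or $\{2,4\}$, in every case. This rules out degree-$5$ generators and yields the stated presentation of $\mathcal{R}_s(I_c(G))$.
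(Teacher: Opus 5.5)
Your setup is the paper's: the factorization $u=vw$ with $v=\prod_i x_i^{f_S(a_i)}$, Lemma~\ref{function lemma1}(b) for non-edges, and Lemma~\ref{function lemma1}(c) for triangles. The gap is the step you call the main obstacle: producing one symmetric $S$ with $|S|\le 6$ that works for \emph{all} triangles at once. That step is the whole content of the theorem, and you neither prove it nor reduce it to something checkable.

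The missing idea is to let $\alpha\le\beta\le\gamma$ be the three smallest exponents of $u$ over all of $[n]$. The sorted exponent vector of every triple dominates $(\alpha,\beta,\gamma)$ coordinatewise. So when $\alpha+\beta+\gamma=k$, a single $S$ adapted to this one triple serves every triangle simultaneously, and the problem becomes arithmetic in one triple. The paper then gives $S$ explicitly as a function of $(\alpha,\beta,\gamma)$:
\begin{itemize}
\item $\{\alpha+1,k-\alpha\}$ if $\alpha<\beta$;
\item $\{2\alpha+1,\gamma\}$ if $\alpha=\beta<\gamma$ and $\gamma\ge 2\alpha+1$;
\item $\{\alpha,\alpha+1,\alpha+\gamma,\alpha+\gamma+1\}$ if $\alpha=\beta<\gamma\le 2\alpha$;
\item $\{1,\frac{3\alpha+1}{2},3\alpha\}$ resp.\ $\{1,2,\alpha+1,2\alpha,3\alpha-1,3\alpha\}$ if $\alpha=\beta=\gamma$ is odd resp.\ even.
\end{itemize}

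The sets you list explicitly depend only on $k$, and they do not suffice. Take $k=10$ and a triangle carrying the three smallest exponents $(3,3,4)$. The witness is forced to be $(3,3,4)$ itself. Then $\{5,6\}$ gives $f$-sum $0$ and $\{1,10\}$ gives $3$, whereas $\{3,4,7,8\}$ works. Your size-$4$ and size-$6$ sets ``near $k/3$, $k/2$, $2k/3$'' are never specified, so nothing is actually verified.

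The regime $\alpha+\beta+\gamma>k$ also needs separate care, because you require witnesses of sum exactly $k$. For $k=7$ and triangle exponents $(2,3,3)$, every triple below $(2,3,3)$ of sum $7$ has $f_{\{1,7\}}$-sum $3\neq 2$. So your default $S=\{1,k\}$ has no witness there. The paper keeps $S=\{1,k\}$ in this regime and checks $2\le b_i+b_j+b_l\le 2+a_i+a_j+a_l-k$ directly. Alternatively, you could lower $(\alpha,\beta,\gamma)$ to a triple of sum $k$ and reuse the exact-sum construction.

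Your observation about $k=5$ is correct and worth keeping. The stated presentation omits $I_c(G)^{(5)}t^5$, so it needs $I_c(G)^{(5)}=\sum_{t=1}^{4}I_c(G)^{(t)}I_c(G)^{(5-t)}$, which the paper's proof (written for $k\ge 7$) leaves implicit. The same argument at $k=5$ only produces $|S|\in\{1,2\}$, and your sets $\{3\}$, $\{1,5\}$, $\{2,4\}$ do cover every sorted triple of sum $5$. But this too rests on the global-minimum reduction above.
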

\begin{proof}
Fix $k\geq7$ and ${\bf x}^{\bf a}\in I_{c}(G)^{(k)}$. From (\ref{sym-comp}), we see that ${\bf x}^{\bf a}=x_{1}^{a_{1}}x_{2}^{a_{2}}\cdots x_{n}^{a_{n}}\in I_{c}(G)^{(k)}$ if and only if $a_{i}+a_{j}\geq k$ for any $\{i,j\}\notin E(G)$ and $a_{i}+a_{j}+a_{\ell}\geq k$ for any pairwise distinct vertices $i,j$ and $\ell$. It is enough to prove that there exist $t\in[6]$ and ${\bf b}=(b_{1},\ldots, b_{n})\in\mathbb{N}^{n}$ such that $0\leq b_{i}\leq a_{i}$ for all $i$, $t\leq b_{i}+b_{j}\leq t+a_{i}+a_{j}-k$ for each $\{i,j\}\notin E(G)$, and $t\leq b_{i}+b_{j}+b_{\ell}\leq t+a_{i}+a_{j}+a_{\ell}-k$ for each pairwise distinct vertices $\{i,j,\ell\}$. Indeed, if this is true, then we have ${\bf x}^{\bf b}\in I_{c}(G)^{(t)}$ and ${\bf x}^{{\bf a}-{\bf b}}\in I_{c}(G)^{(k-t)}$, which implies ${\bf x}^{\bf a}={\bf x}^{\bf b}{\bf x}^{{\bf a}-{\bf b}}\in I_{c}(G)^{(t)}I_{c}(G)^{(k-t)}$. Take the three smallest values among $a_{1},\ldots, a_{n}$, counted with multiplicity, and denote them by $\alpha\leq\beta\leq\gamma$. Then we have $\alpha+\beta+\gamma\geq k$. We distinguish the following two cases: 

\textbf{Case 1.} Suppose $\alpha+\beta+\gamma>k$. In this case, define the map $f:\mathbb{N}\rightarrow\{0,1,2\}$ as 
\[
f(s)=
\begin{cases}
0, & \text{if } s=0,\\
1, & \text{if } 1\leq s<k,\\
2, & \text{if } s\geq k.
\end{cases}
\]
Set $b_{i}=f(a_{i})$ and ${\bf b}=(b_{1},\ldots, b_{n})$. Then we can easily verify that $f$ satisfies the conditions of Lemma \ref{function lemma1}, which implies that $0\leq b_{i}\leq a_{i}$ for all $i$, $2\leq b_{i}+b_{j}\leq 2+a_{i}+a_{j}-k$ for each $\{i,j\}\notin E(G)$. 
Fix pairwise distinct vertices $i,j,\ell$. We may assume that $a_{i}\leq a_{j}\leq a_{\ell}$. Note that $k+1\leq\alpha+\beta+\gamma\leq a_{i}+a_{j}+a_{\ell}$. Suppose on the contrary that $b_{i}+b_{j}+b_{\ell}\leq1$. Then we may assume that $b_{i}\leq 1$, $b_{j}=0$, and $b_{\ell}=0$. These together with the definition of $f$ imply $a_{i}\leq k-1$, $a_{j}=0$, and $a_{\ell}=0$. Hence, we obtain $k+1\leq a_{i}+a_{j}+a_{\ell}\leq k-1$, which is a contradiction. Therefore, we get $2\leq b_{i}+b_{j}+b_{\ell}$. Set $p=|\{x\,: x\in\{a_{i},a_{j},a_{k}\}\mbox{ with }x\geq k\}|$ and $q=|\{x\,: x\in\{a_{i},a_{j},a_{\ell}\}\mbox{ with }1\leq x\leq k-1\}|$. Then we can check that $f(a_{i})+f(a_{j})+f(a_{\ell})=2p+q$ holds. If $p=0$, then we have $f(a_{i})+f(a_{j})+f(a_{\ell})=q\leq 3\leq2+a_{i}+a_{j}+a_{\ell}-k$. Also, if $p=1$, then $a_{i}+a_{j}+a_{\ell}\geq k+q$, and so we have $f(a_{i})+f(a_{j})+f(a_{\ell})=2+q\leq2+a_{i}+a_{j}+a_{\ell}-k$. Finally, suppose $2\leq p\leq 3$. In this case, we clearly have $a_{i}+a_{j}+a_{\ell}\geq2k$, which gives us $2+a_{i}+a_{j}+a_{\ell}-k\geq2+k\geq9$, by the assumption $k\geq7$. Hence, we obtain $f(a_{i})+f(a_{j})+f(a_{\ell})\leq 6<9\leq2+a_{i}+a_{j}+a_{\ell}-k$. 
Therefore, $2\leq b_{i}+b_{j}+b_{k}=f(a_{i})+f(a_{j})+f(a_{\ell})\leq 2+a_{i}+a_{j}+a_{\ell}-k$, and thus we get  ${\bf x}^{\bf a}={\bf x}^{\bf b}{\bf x}^{{\bf a}-{\bf b}}\in I^{(2)}I^{(k-2)}$, as desired. 

\textbf{Case 2.} Suppose $\alpha+\beta+\gamma=k$. In view of Lemma~\ref{function lemma2}, it suffices to prove that there exists a set $S\subset[k]$ satisfying the following conditions:
\begin{enumerate}
\item $|S|\leq 6$,
\item $f_{S}(\alpha)+f_{S}(\beta)+f_{S}(\gamma)=|S|$,
\item $s\in S$ if and only if $k+1-s\in S$.
\end{enumerate}
Indeed, assume that such a subset $S\subset[k]$ exists. Then by setting $b_{i}=f_{S}(a_{i})$ for every $i$, and using Lemma \ref{function lemma1} and \ref{function lemma2}, we obtain that $0\leq b_{i}\leq a_{i}$ for all $i$, $|S|\leq b_{i}+b_{j}\leq |S|+a_{i}+a_{j}-k$ for each $\{i,j\}\notin E(G)$, and $|S|\leq b_{i}+b_{j}+b_{\ell}\leq |S|+a_{i}+a_{j}+a_{\ell}-k$ for each pairwise distinct vertices $\{i,j,\ell\}$. Hence, ${\bf x}^{\bf a}={\bf x}^{\bf b}{\bf x}^{{\bf a}-{\bf b}}\in I^{(|S|)}I^{(k-|S|)}$, with $|S|\leq6$, as desired. To show the existence of such a set $S$ we distinguish the following cases. 

(i) Suppose that $\alpha<\beta$. In this case, we set $S=\{\alpha+1, k-\alpha\}\subset[k]$.

(ii) Suppose that $\alpha=\beta<\gamma$ and $\gamma\geq2\alpha+1$. In this case, we set $S=\{2\alpha+1, \gamma\}\subset[k]$.  

(iii) Suppose that $\alpha=\beta<\gamma$ and $\gamma\leq2\alpha$. In this case, we set $S=\{\alpha,\alpha+1,\alpha+\gamma,\alpha+\gamma+1\}\subset[k]$. 

(iv) Suppose that $\alpha=\beta=\gamma$ and $\alpha$ is odd. In this case, we set $S=\{1,\frac{3\alpha+1}{2},3\alpha\}\subset[k]$.  

(vi) Suppose that $\alpha=\beta=\gamma$ and $\alpha$ is even. In this case, we set $S=\{1,2,\alpha+1,2\alpha,3\alpha-1,3\alpha\}\subset[k]$.  

Then one can easily verify that the set $S$  satisfies the above conditions (1) to (3), with $|S|\in\{2,3,4,6\}$. Hence, the assertion follows. 
\end{proof}

The following example shows that the degree bound for generators of $\mathcal{R}_{s}(I_{c}(G))$ established in Theorem \ref{symbolic Rees} is indeed sharp. This gives an instance for the results developed in the remainder of this section, giving precise descriptions of minimal generators of $\mathcal{R}_{s}(I_{c}(G))$ in terms of $G$. 

\begin{ex}
Let $G$ be the graph with the vertex set $[7]$ and the edge set $E(G)=\binom{[7]}{2}\setminus\{\{5,6\},\{5,7\},\{6,7\}\}$. Notice that the induced subgraph of $G$ on $[4]$ is a complete graph, and each of the vertices $5,6$ and $7$ are adjacent to all vertices in $[4]$ (compare to Theorem~\ref{degree-six}). By a straightforward calculation,  we have $$(x_{1}x_{2}x_{3}x_{4})^2. (x_{5}x_{6}x_{7})^{3}\in I_{c}(G)^{(6)}\setminus\displaystyle\sum_{t=1}^{5}I_{c}(G)^{(t)}I_{c}(G)^{(6-t)}.$$ 
\end{ex}

A triangle $T$ in a graph $G$ is called a {\em split-dominating triangle}, if there exists $i_1\in T$ such that $V(G)=N_G(i_1)\cup[N_G(i_2)\cap N_G(i_3)]$, where $\{i_2,i_3\}=V(T)\setminus \{i_1\}$. A \emph{clique} of $G$ is a subset $Q \subseteq V(G)$ such that $G_{Q}$ is a complete graph.  

\begin{thm}\label{degree-six} 
		Let $G$ be a graph on $[n]$. Then $ut^6$ is a minimal generator of  $\mathcal{R}_{s}(I_{c}(G))$   if and only if $u={\bf x}_A^2{\bf x}_B^3{\bf x}_C^4$, where  $A$ is a clique of $G$ with $|A|\geq 3$, $$B=\{j\in[n]\setminus A:\, \{i,j\}\in E(G) \text{ for any } i\in A\}\neq \emptyset,$$ $(G_{B})^c$ is a non-bipartite graph without isolated vertices,  $C=[n]\setminus (A\cup B)$, and one of the following conditions holds: 
	\begin{enumerate}
		\item [\textup{(i)}] $|A|\geq 4$.
		\item [\textup{(ii)}]   $|A|=3$ and  $G_{A}$ is not a split-dominating triangle of $G$.
	\end{enumerate}  
\end{thm}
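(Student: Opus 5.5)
The plan is to work entirely with exponent vectors. By \eqref{sym-comp}, ${\bf x}^{\bf a}\in I_c(G)^{(k)}$ if and only if $a_i+a_j\geq k$ for every non-edge $\{i,j\}$ of $G$ and $a_i+a_j+a_\ell\geq k$ for every triple of distinct vertices. A monomial $ut^6$ is a minimal generator of $\mathcal{R}_s(I_c(G))$ exactly when:
\begin{enumerate}
\item[(a)] $u$ is a minimal monomial generator of $I_c(G)^{(6)}$;
\item[(b)] $u\notin\sum_{t=1}^{5}I_c(G)^{(t)}I_c(G)^{(6-t)}$.
\end{enumerate}
Condition (b) means there is no ${\bf b}\leq{\bf a}$ such that ${\bf b}$ satisfies the inequalities for $t$ and ${\bf a}-{\bf b}$ satisfies them for $6-t$. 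So the proof splits into three parts: a reduction pinning down the three smallest exponents, a minimality analysis producing the sets $A,B,C$, and a decomposability analysis producing the conditions on $(G_B)^c$ and on $A$.

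\emph{Step 1: reduction to $\alpha=\beta=\gamma=2$.} Let $\alpha\leq\beta\leq\gamma$ be the three smallest exponents of ${\bf a}$. I would rerun the proof of Theorem~\ref{symbolic Rees} with $k=6$, checking each use of $k\geq 7$.
\begin{itemize}
\item In Lemma~\ref{function lemma1}, $k\geq 7$ only serves to guarantee $f(s)\leq s$. Since ${\bf b}\leq{\bf a}$ is all that is needed, I would recheck this directly for the explicit functions used.
\item In Case 1 of the theorem, $k\geq 7$ is used only in the bound $6\leq 2+k$, which still holds for $k=6$.
\item In Case 2, subcases (i)--(iv) produce symmetric sets $S\subset[6]$ with $|S|\in\{2,3,4\}$, hence proper decompositions.
\end{itemize}
Thus a non-decomposable $u$ must have $\alpha=\beta=\gamma$ with $3\alpha=6$, i.e.\ $\alpha=\beta=\gamma=2$.

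\emph{Step 2: minimality forces $u={\bf x}_A^2{\bf x}_B^3{\bf x}_C^4$.} From Step 1 every $a_i\geq 2$. Lowering any $a_i>4$ to $4$ preserves all the inequalities, so every $a_i\in\{2,3,4\}$. Put $A=\{i:a_i=2\}$, which has $|A|\geq 3$.
\begin{itemize}
\item Two vertices of $A$ with exponent sum $4<6$ must be adjacent, so $A$ is a clique.
\item A vertex with exponent $3$ must be adjacent to every vertex of $A$, since $2+3<6$.
\item Minimality of an exponent $4$ at $j$: lowering it to $3$ must fail, which happens precisely when $j$ misses some vertex of $A$. Hence the exponent-$3$ vertices are exactly the common neighbours $B$ of $A$ outside $A$.
\item Minimality of an exponent $3$ at $j$: lowering it to $2$ must fail, which happens precisely when $j$ has a non-neighbour in $B$. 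This is where ``$(G_B)^c$ has no isolated vertices'' comes from.
\end{itemize}
If $B=\emptyset$, then I expect ${\bf x}_A^2{\bf x}_C^4$ to factor, for example via $b={\bf 1}_A+2\cdot{\bf 1}_C$ with $t=3$. Conversely, these conditions give membership in $I_c(G)^{(6)}$ together with minimality.

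\emph{Step 3: decomposability, which I expect to be the main obstacle.} I would classify all possible ${\bf b}$. Every triple inside $A$ is tight: its exponent sum is exactly $6$. Hence ${\bf b}$ must sum to exactly $t$ on each triple of $A$.
\begin{itemize}
\item If $|A|\geq 4$, this forces ${\bf b}$ to be constant on $A$. So $t=3$ with $b_A\equiv 1$, or ${\bf b}$ is a trivial split.
\item If $|A|=3$, other distributions such as $(0,1,1)$ or $(0,0,1)$ on $A$ are also possible.
\end{itemize}
The tight non-edges inside $B$ (exponent sum $3+3=6$) force $b_i+b_j=t$ on every edge of $(G_B)^c$. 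The tight triples $A$--$A$--$B$ (sum $2+2+3=7$) leave slack $1$.

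For $t=3$, $b_A\equiv 1$ (the case that occurs when $|A|\geq 4$), the remaining constraints make $b_B\in\{1,2\}$ with $b_i+b_j=3$ across every edge of $(G_B)^c$. So $b_B$ is a proper $2$-colouring of $(G_B)^c$, and such a factorization exists if and only if $(G_B)^c$ is bipartite; one then extends by $b_C=2$. This yields the non-bipartite condition. I still need to check that $t=2$ (and symmetrically $t=4$) forces the same bipartition obstruction or an immediate contradiction.

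For $|A|=3$ with $A=\{i_1,i_2,i_3\}$, I would analyse the uneven ${\bf b}$, for instance $b_{i_1}=0$ and $b_{i_2}=b_{i_3}=1$ with $t=2$. The requirement that ${\bf a}-{\bf b}$ satisfies the non-edge inequalities at $i_1$ should translate, vertex by vertex, into every vertex lying in $N_G(i_1)\cup\bigl(N_G(i_2)\cap N_G(i_3)\bigr)$. That is exactly the split-dominating condition, so this step must show that such a factor exists if and only if $G_A$ is split-dominating. Organizing this bookkeeping cleanly, so that the converse (non-decomposability under (i) or (ii) together with non-bipartiteness) covers every $t\in[5]$ and every admissible ${\bf b}$, is the step I expect to be hardest.
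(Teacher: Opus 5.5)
\paragraph{What is already correct.}
Your Steps 1 and 2 are correct and match the paper. The paper likewise implicitly reruns the case analysis of Theorem~\ref{symbolic Rees} at $k=6$ to force $\alpha=\beta=\gamma=2$. It then reads off $A$, $B$, $C$, the condition $B\neq\emptyset$ and the no-isolated-vertex condition from minimality.

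For $|A|\geq 4$ your argument is complete and tidier than the paper's. Tightness of every triple in $A$ makes ${\bf b}$ constant on $A$, so only $t=3$ survives. A $t=3$ factorization is then exactly a proper $2$-colouring of $(G_B)^c$, extended by $b_C=2$. This gives the bipartite condition in both directions and disposes of $t\in\{1,2,4,5\}$ at once. The paper instead reduces, via Theorem~\ref{symbolic Rees}, to showing $u\notin I^{(2)}I^{(4)}+(I^{(3)})^2+\mathfrak{m}I^{(6)}$. It then uses an ad hoc four-vertex argument for the first product and an odd-cycle parity argument for the second.

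\paragraph{The gap: the case $|A|=3$.}
This is precisely where condition (ii) enters, and here you only state what must be shown. The following pieces are missing.

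(1) \emph{Necessity of (ii).} You need an explicit factorization when $V(G)=N_G(i_1)\cup(N_G(i_2)\cap N_G(i_3))$. Take $b_{i_1}=0$, $b_{i_2}=b_{i_3}=1$, $b_p=1$ for $p\in N_G(i_1)\setminus A$, and $b_p=2$ for $p\notin N_G[i_1]$. Then ${\bf x}^{\bf b}\in I^{(2)}$, and $u/{\bf x}^{\bf b}\in I^{(4)}$ exactly because every $p\notin N_G[i_1]$ is adjacent to both $i_2$ and $i_3$.

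(2) \emph{Sufficiency for $t=2$ (and $t=4$ by symmetry).} Up to permutation, $b_A$ is either $(0,1,1)$ or $(0,0,2)$, and you only sketch the first.
\begin{itemize}
\item For $(0,1,1)$, your attribution is slightly off. It is the ${\bf b}$-inequality at $i_1$ that gives $b_p\geq 2$, hence $c_p\leq 2$, for $p\notin N_G[i_1]$. The ${\bf c}$-inequalities at $i_2,i_3$, where $c=1$, then force $p\in N_G(i_2)\cap N_G(i_3)$.
\item The pattern $b_A=(0,0,2)$, $c_A=(2,2,0)$ must be excluded with a different special vertex. Every $q\notin N_G[i_3]$ needs $c_q\geq 4$, so $q\in C$ and $b_q=0$. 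The ${\bf b}$-inequalities at $i_1,i_2$ then force $q\in N_G(i_1)\cap N_G(i_2)$, i.e.\ $G_A$ is split-dominating at $i_3$.
\end{itemize}
This second pattern is why (ii) has to quantify over all three vertices of the triangle.

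(3) \emph{The case $t=3$.} Here ${\bf b}$ need not be constant on $A$ (e.g.\ $b_A=(0,1,2)$), so your $2$-colouring argument does not apply verbatim. Use instead, as the paper does, that the tight non-edges inside $B$ force $b_i+b_j=3$ along every edge of $(G_B)^c$, whatever $b_A$ is. This is impossible around an odd cycle.

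(4) \emph{The cases $t\in\{1,5\}$.} Take a triple of $A$ containing the two zero entries of ${\bf b}$ (resp.\ ${\bf c}$). It forces $b_p\geq 1$ (resp.\ $c_p\geq 1$) for all $p\notin A$. Then the other factor violates the degree-$5$ inequality on any non-edge of $G$ inside $B$. Alternatively, Theorem~\ref{symbolic Rees} gives $I\,I^{(5)}\subseteq I^{(2)}I^{(4)}+(I^{(3)})^2$.
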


\begin{proof}
Set $V(G)=[n]$ and $I=I_c(G)$. First assume that 
$ut^6$ is a minimal generator of  $\mathcal{R}_{s}(I_{c}(G))$.  Let $u={\textbf x}^{\textbf a}$, and let $\alpha,\beta$ and $\gamma$ be as in the proof of Theorem \ref{symbolic Rees}. 	
Considering the proof of Theorem~\ref{symbolic Rees},   
the minimality of $ut^6$ implies that condition (vi) of the proof happens, that is $\alpha+\beta+\gamma=k=6$,  $\alpha=\beta=\gamma$ and $\alpha$ is even. Then we have $\alpha=\beta=\gamma=2$. Let $A=\{i:\, a_i=2\}$, $B=\{i:\, a_i=3\}$, and $C=[n]\setminus (A\cup B)$. Since $\alpha=\beta=\gamma=2$, we have $m=|A|\geq 3$.
Moreover, from $u\in I^{(6)}$ we obtain that $A$ is a clique, and $\{i,j\}\in E(G)$ for any $i\in A$ and any $j\in B$. If $j\in [n]\setminus A$ be such that $\{i,j\}\in E(G)$ for any $i\in A$, then $a_j=3$, otherwise if $a_j\geq 4$, then $u/x_j\in I^{(6)}$, a contradiction. Thus $B=\{j\in[n]\setminus A:\, \{i,j\}\in E(G) \text{ for any } i\in A\}$.  
Also the minimality of $ut^6$ yields that $a_i=4$ for any $i\in C$. Thus $u={\textbf x}_A^2{\textbf x}_B^3{\textbf x}_C^4$.
Now, by contradiction, assume that 
$B=\emptyset$. Then $ut^6=(({\textbf x}_A {\textbf x}_C^2)t^3)^2\in (I^{(3)}t^3)^2,$
which contradicts to the minimality of $ut^6$.
Next, assume on the contrary that $(G_{B})^c$ is a bipartite graph. Then $B$ has a partition $B=B_1\sqcup B_2$ such that $G_{B_1}$ and $G_{B_2}$ are complete graphs. Then   
$$ut^6=(\prod_{i\in A} x_i \prod_{i\in B_1} x_i^2 \prod_{i\in B_2} x_i  \prod_{i\in C} x_i^2)t^3(\prod_{i\in A} x_i \prod_{i\in B_1} x_i \prod_{i\in B_2}x_i^2\prod_{i\in C} x_i^2)t^3\in (I^{(3)}t^3)^2,$$
a contradiction. 
Thus $(G_{B})^c$ is not a bipartite graph, as claimed. Suppose by contradiction that $(G_{B})^c$ has an isolated vertex, say $i$. Then $\{i,j\}\in E(G)$ for any $j\in A\cup B$. This implies that $u/x_i\in I^{(6)}$, which means $ut^6\in \m I^{(6)}t^6$, a contradiction. Thus $(G_{B})^c$ has no isolated vertices.  If $m\geq 4$, then condition (i) holds and we are done. So we may assume that $m=3$. We need to show that $G_{A}$ is not  a split-dominating triangle of $G$. Suppose on the contrary that $G_{A}$ is split-dominating. So we may assume that $A=[3]$, and $V(G)=N_G(1)\cup[N_G(2)\cap N_G(3)]$.
Set $$w=(x_2x_3 \prod_{i\in N_G(1)\setminus A} x_i\prod_{i\notin N_G(1)} x_i^2)t^2(x_1^2x_2x_3\prod_{i\in B} x_i^2 \prod_{i\in N_G(1)\setminus (A\cup B)} x_i^3  \prod_{i\notin N_G(1)} x_i^2)t^4.$$
	Since for any $i\notin N_G(1)$ we have $i\in N_G(2)\cap N_G(3)$, it is seen that $w\in I^{(2)}t^2.I^{(4)}t^4$ and $w$ divides $ut^6$, a contradiction. Hence, condition (ii) holds.
	
	\smallskip

Conversely, consider a monomial $u={\textbf x}_A^2{\textbf x}_B^3{\textbf x}_C^4$,  where  $A$ is a clique of $G$ with $m=|A|\geq 3$, $B=\{j\in[n]\setminus A:\, \{i,j\}\in E(G) \text{ for any } i\in A\}\neq\emptyset$, such that $(G_{B})^c$ is not a bipartite graph,  and $C=[n]\setminus (A\cup B)$.
It is easily seen that $u\in I^{(6)}$. In light of Theorem~\ref{symbolic Rees}, in order to show that $ut^6$ is a minimal generator of  $\mathcal{R}_{s}(I_{c}(G))$,  it suffices to show that $u\notin I^{(2)}I^{(4)}+(I^{(3)})^2+\m I^{(6)}$. First we show that $u\notin \m I^{(6)}$. For any $i\in A$, we have $a_i+a_j+a_r=6$ for some distinct $j,r\neq i$. So $u/x_i\notin I^{(6)}$. 
Since $(G_{B})^c$ has no isolated vertices, for any $i\in B$, there exists $j\in B$ with $j\neq i$ such that $\{i,j\}\notin E(G)$. Since $a_i+a_j=6$, we conclude that $u/x_i\notin I^{(6)}$. Now, let $i\in C$. Then there exists $j\in A$ such that $\{i,j\}\notin E(G)$. This together with the equality $a_i+a_j=6$ gives $u/x_i\notin I^{(6)}$. Altogether, we conclude that $u\notin \m I^{(6)}$.  
Next we show that $u\notin I^{(2)}I^{(4)}+(I^{(3)})^2$.

First assume that (i) holds. 
Suppose to the contrary that $u\in I^{(2)}I^{(4)}$.  Then $u=vw$ for some monomials $v\in I^{(2)}$ and $w\in I^{(4)}$. Let $v={\bf x}^{\bf b}$ and $w={\bf x}^{\bf c}$.
By (i) we have $m\geq 4$. Without loss of generality assume that $[4]\subseteq [m]$.
Then $b_1+b_2+b_3\geq 2$ and $c_1+c_2+c_3\geq 4$. Note that $b_i+c_i=2$ for any $1\leq i\leq 4$. Thus    $c_1+c_2+c_3=6-(b_1+b_2+b_3)\leq 4$. So  $c_1+c_2+c_3=4$ and $b_1+b_2+b_3=2$. Hence, we have $b_i=0$ for some $i\in[3]$. We may assume that $b_1=0$ and so $b_2+b_3=2$. Similar argument shows that $b_2+b_3+b_4=2$, which implies that $b_4=0$. So $2\leq b_2+b_1+b_4=b_2\leq 2$ and hence, $b_2=2$. Thus $b_3=2-b_2=0$, and $b_1+b_3+b_4=0<2$,  a contradiction. Thus $u\notin I^{(2)}I^{(4)}$.  

Next, assume by contradiction that  $u=vw$ for some monomials $v,w\in I^{(3)}$. As before, let $v={\bf x}^{\bf b}$ and $w={\bf x}^{\bf c}$. Since $B\neq\emptyset$ and	$(G_{B})^c$ is not bipartite,  $G_{B}$ has an induced subgraph of the form $C_{2r+1}^c$. We
may assume that $C_{2r+1}$ is the cycle $5,6,\ldots,2r+5$. 
For any $5\leq i<2r+5$  since $\{i,i+1\}\notin E(G)$,  we have $b_i+b_{i+1}\geq 3$ and $c_i+c_{i+1}\geq 3$. On the other hand, $b_i+c_i=b_{i+1}+c_{i+1}=3$,  which gives  
	$b_i+b_{i+1}=c_i+c_{i+1}=3$ for any $5\leq i<2r+5$. This gives
	$b_i=b_{i+2}$ and $c_i=c_{i+2}$ for any $5\leq i<2r+4$. In particular,  
	$b_5=b_{2r+5}$ and $c_5=c_{2r+5}$. Since $\{5,2r+5\}\notin E(G)$, we have $2b_5=b_5+b_{2r+5}\geq 3$ and $2c_5=c_5+c_{2r+5}\geq 3$. Hence, $b_5\geq 2$ and $c_5\geq 2$, which contradicts to $b_5+c_5=3$. Thus $u\notin (I^{(3)})^2$. So we have shown that   $u={\textbf x}_A^2{\textbf x}_B^3{\textbf x}_C^4t^6$ is a minimal generator of $\mathcal{R}_{s}(I_{c}(G))$.
	
Next, assume that condition (ii) holds. We may assume that $A=[3]$  and $C_{2r+1}$ is the cycle $4,5,\ldots,2r+4$, where $C_{2r+1}^c$ is an induced subgraph of $G_{B}$.	 
We show that $u\notin I^{(2)}I^{(4)}+(I^{(3)})^2$.
By an argument identical to the one used in the case (i), we have $u\notin (I^{(3)})^2$.
	Now, by contradiction assume that $u\in I^{(2)}I^{(4)}$. Then $u=vw$ for some monomials $v\in I^{(2)}$ and $w\in I^{(4)}$. Let $v={\bf x}^{\bf b}$ and $w={\bf x}^{\bf c}$. We have $b_1+b_2+b_3=2$ and $c_1+c_2+c_3=4$. So $b_i=0$ for some $i\in[3]$. We may assume that $b_1=0$ and $b_2\geq 1$. Then $b_2+b_3=2$, $c_1=2$, $c_2+c_3=2$ and $c_2\leq 1$. Then either $b_2=b_3=1$ or $b_2=2$ and $b_3=0$. First we consider the case that $b_2=b_3=1$. Then $c_2=c_3=1$.
	Since $\{1,2,3\}$ is not split-dominating, there exists $p\in S$ such that $p\notin N_G(1)\cup[N_G(2)\cap N_G(3)]$. By the choice of $p$ we have $b_p=b_p+b_1\geq 2$. Thus $c_p=4-b_p\leq 2$ and so $c_2+c_p\leq 3$ and $c_3+c_p\leq 3$. This together with $p\notin N_G(2)\cap N_G(3)$ gives a contradiction to $w\in I^{(4)}$. Now, consider the case that $b_2=2$ and $b_3=0$.
	Since $\{1,2,3\}$ is not split-dominating, there exists $q\in S$ such that $q\notin N_G(2)\cup[N_G(1)\cap N_G(3)]$. So $b_q=b_q+b_1=b_q+b_3\geq 2$. Thus $c_q=4-b_q\leq 2$ and $c_2+c_q=c_q\leq 2$. This together with $q\notin N_G(2)$ contradicts to $w\in I^{(4)}$, which concludes the proof.
\end{proof}

The following corollary is an immediate consequence of Theorem~\ref{degree-six} and Theorem~\ref{symbolic Rees}. Recall that for graphs $G$ and $H$ on disjoint vertex sets, the {\em join} of $G$ and $H$ is the graph with the vertex set $V(G)\cup V(H)$ and the edge set $E(G)\cup E(H)\cup\{\{i,j\}:\, i\in V(G),\, j\in V(H)\}$ and is denoted by $G\vee H$. By $K_n$, $C_n$, respectively $P_n$, we denote a complete graph, a cycle graph, respectively a path graph, on $n$ vertices.

\begin{cor}\label{no-degree-six}
	Let $G$ be a graph. Then the graded $S$-algebra $\mathcal{R}_{s}(I_{c}(G))$ is generated in degree at most 4 if and only if 
	\begin{enumerate}
		\item [\textup{(i)}] $G$ is $(K_4\vee C_{2r+1}^c)$-free, and
		\item [\textup{(ii)}] For any induced subgraph $K_3\vee C_{2r+1}^c$ in $G$, the corresponding triangle $K_3$ is a split-dominating triangle of $G$.
	\end{enumerate}     
	 In particular, if $G$ is  $(K_3\vee C_{2r+1}^c)$-free, then $\mathcal{R}_{s}(I_{c}(G))$ is generated in degree at most 4. 
\end{cor}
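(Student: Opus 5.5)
By Theorem~\ref{symbolic Rees}, $\mathcal{R}_{s}(I_{c}(G))=S[I_{c}(G)t,I_{c}(G)^{(2)}t^2,I_{c}(G)^{(3)}t^3,I_{c}(G)^{(4)}t^4,I_{c}(G)^{(6)}t^6]$. Hence $\mathcal{R}_{s}(I_{c}(G))$ is generated in degree at most $4$ if and only if it has no minimal generator of the form $ut^6$. So the plan is to translate the nonexistence of the data in Theorem~\ref{degree-six} into the two forbidden-subgraph conditions (i) and (ii).

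First, suppose a minimal generator $ut^6$ exists. By Theorem~\ref{degree-six} we get a clique $A$ with $|A|\geq 3$ and the common neighbourhood $B\neq\emptyset$ of $A$ outside $A$. Moreover, $(G_B)^c$ is non-bipartite without isolated vertices. Choose a shortest odd cycle in $(G_B)^c$; it is an induced odd cycle $C_{2r+1}$ of $(G_B)^c$, and its vertex set $D\subseteq B$ induces $C_{2r+1}^c$ in $G$. Every vertex of $D$ is adjacent to every vertex of $A$ and $A$ is a clique, so $A\cup D$ induces $K_{|A|}\vee C_{2r+1}^c$.
\begin{itemize}
\item If $|A|\geq 4$, any four vertices of $A$ together with $D$ give an induced $K_4\vee C_{2r+1}^c$, violating (i).
\item If $|A|=3$, then $G_A$ is not split-dominating, and $A\cup D$ induces a $K_3\vee C_{2r+1}^c$ whose triangle fails (ii).
\end{itemize}
So (i) and (ii) imply that there is no degree-six generator.

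Conversely, suppose (i) fails or (ii) fails. Then there is an induced $K_m\vee C_{2r+1}^c$ with clique $A_0$, where either $m=4$, or $m=3$ and $A_0$ is not split-dominating. We must manufacture data satisfying Theorem~\ref{degree-six}. This is the main obstacle, because the conditions there involve the full common neighbourhood $B$ and require that $(G_B)^c$ has no isolated vertices.

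In the case $m=4$, take $A_0$ itself. Its common neighbourhood $B$ contains the odd cycle $D$, so $(G_B)^c$ is non-bipartite. If $(G_B)^c$ has an isolated vertex $j$, then $j$ is adjacent to all of $A_0\cup B$. Set $A:=A_0\cup\{j\}$, which is still a clique. Its common neighbourhood is $B\setminus\{j\}$, which still contains $D$ because $j\notin D$ (vertices of $D$ are not isolated in $(G_B)^c$). Iterate this; it terminates, and it ends with $|A|\geq 4$ and all hypotheses of Theorem~\ref{degree-six}(i) satisfied.

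In the case $m=3$, the same enlargement applies. If some vertex is added, we land in case (i) with $|A|\geq4$. Otherwise $A=A_0$ is a triangle that is not split-dominating, with $B$ satisfying the conditions, so Theorem~\ref{degree-six}(ii) applies. Either way a degree-six minimal generator exists, which gives the equivalence. Finally, the ``in particular'' statement follows because $(K_3\vee C_{2r+1}^c)$-freeness implies both (i) and (ii): any induced $K_4\vee C_{2r+1}^c$ contains an induced $K_3\vee C_{2r+1}^c$, and (ii) holds vacuously.
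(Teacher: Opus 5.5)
Your proposal is correct and follows the route the paper intends: the paper only says the corollary is an immediate consequence of Theorems~\ref{symbolic Rees} and~\ref{degree-six}, and your argument supplies the details this leaves implicit. Both of your key steps are valid. Taking a shortest, hence induced, odd cycle in $(G_B)^c$ handles one direction. For the other, absorbing the isolated vertices of $(G_B)^c$ into the clique keeps $D$ inside the common neighbourhood, and can only move you into case (i) of Theorem~\ref{degree-six}.
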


An edge $\{i,j\}$ of a graph $G$ is called a {\em dominating edge} of $G$, if $\{i,j\}$ is a dominating set of $G$, that is $[n]=N_G(i)\cup N_G(j)$.

\begin{thm}\label{degree-four} 
		Let $G$ be a graph on $[n]$. Then $ut^4$ is a minimal generator of  $\mathcal{R}_{s}(I_{c}(G))$ if and only if $u=x_rx_s{\bf x}_B^2{\bf x}_C^3$, where $\{r,s\}\in E(G)$ is not a dominating edge of $G$, $B=N_G(r)\cap N_G(s)\neq \emptyset$, $B$ is not a clique of $G$ and $C=[n]\setminus (\{r,s\}\cup B)$.
\end{thm}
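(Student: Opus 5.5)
We follow the template of the proof of Theorem~\ref{degree-six}, using the decomposition established in the proof of Theorem~\ref{symbolic Rees}. Throughout, write $u={\bf x}^{\bf a}$ and let $\alpha\le\beta\le\gamma$ be the three smallest exponents. Recall that ${\bf x}^{\bf a}\in I^{(k)}$ if and only if $a_i+a_j\ge k$ for every non-edge $\{i,j\}$ and $a_i+a_j+a_\ell\ge k$ for all distinct $i,j,\ell$.

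\textbf{Necessity.} Suppose $ut^4$ is a minimal generator. The argument of Theorem~\ref{symbolic Rees} (Lemmas~\ref{function lemma1} and~\ref{function lemma2}) works verbatim for $k=4$ whenever the chosen set $S$ satisfies $|S|<4$. So first we would check that minimality rules out every case except (iii).
\begin{itemize}
\item Case 1 ($\alpha+\beta+\gamma>4$) gives $u\in I^{(2)}I^{(2)}$.
\item Cases (i) and (ii) give a set $S$ of size $2$.
\item Case (iv) is impossible, since $3\alpha=4$ has no integer solution.
\item Case (vi) is impossible for the same reason.
\end{itemize}
This leaves case (iii): $\alpha+\beta+\gamma=4$ with $\alpha=\beta<\gamma\le 2\alpha$, which forces $\alpha=\beta=1$ and $\gamma=2$.

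One point needs care. The proof of Lemma~\ref{function lemma1} used $k\ge 7$ only to obtain $f(s)\le s$, and this also follows from (1) and (a). So we should confirm that the small $k$ causes no failure.

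With $\alpha=\beta=1$, exactly two exponents equal $1$; call their indices $r,s$. Since $a_r+a_s=2<4$, the pair $\{r,s\}$ must be an edge. For $j\notin\{r,s\}$ we have $a_j\ge 2$. We then argue as follows.
\begin{itemize}
\item If $j\in N(r)\cap N(s)$, then $a_j=2$ by minimality: otherwise $u/x_j\in I^{(4)}$, because the triple condition $1+1+a_j\ge4$ holds and the pair conditions hold automatically.
\item If $j\notin N(r)\cap N(s)$, then $j$ is non-adjacent to $r$ or to $s$, so $a_j\ge 3$, and minimality gives $a_j=3$.
\end{itemize}
This yields $u=x_rx_s{\bf x}_B^2{\bf x}_C^3$. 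Here we must double-check that the pair conditions inside $B\cup C$ hold with exponents $2$ and $3$; they do, since $2+2=4$.

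Next we show each condition in the statement is forced, by exhibiting a factorization in $I^{(2)}I^{(2)}$ (or $I\cdot I^{(3)}$) otherwise.
\begin{itemize}
\item If $B=\emptyset$, then $u=x_rx_s{\bf x}_C^3$, and we would try writing it as $(x_r{\bf x}_C)(x_s{\bf x}_C^2)$ with suitable multidegrees. The cleanest choice is $u=(x_rx_s{\bf x}_C^2)\cdot({\bf x}_C)$, with the first factor in $I^{(3)}$ and the second in $I$. This must be verified.
\item If $B$ is a clique, then $u=(x_rx_s{\bf x}_B{\bf x}_C^2)({\bf x}_B{\bf x}_C)$, with the factors in $I^{(3)}$ and $I^{(1)}$ respectively. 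This holds since ${\bf x}_{B\cup C}$ is a generator ${\bf x}_{[n]\setminus\{r,s\}}$.
\item If $\{r,s\}$ is dominating, then $C=(N(r)\setminus N(s))\cup(N(s)\setminus N(r))$, and we split $u=(x_r{\bf x}_B{\bf x}_{C_s}{\bf x}_{C_r}^2)(x_s{\bf x}_B{\bf x}_{C_r}{\bf x}_{C_s}^2)$, where $C_r=N(r)\setminus N(s)$ and $C_s=N(s)\setminus N(r)$. We then check that both factors lie in $I^{(2)}$.
\end{itemize}
These verifications are routine inequality checks against the characterization of $I^{(k)}$ recalled above.

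\textbf{Sufficiency.} It suffices to show $u\notin\m I^{(4)}+I\,I^{(3)}+(I^{(2)})^2$, since $I^{(4)}$-generators arise only from these products by Theorem~\ref{symbolic Rees}'s reduction and minimality in lower degrees.
\begin{itemize}
\item For $u\notin\m I^{(4)}$: dividing by $x_r$ breaks the triple $\{r,s,b\}$ for any $b\in B$. A vertex $c\in C$ is a non-neighbour of $r$ or $s$, so $a_c+1=4$ is tight. A vertex $b\in B$ lies in a tight triple $\{r,s,b\}$.
\item For $u=vw$ with $v\in I$ and $w\in I^{(3)}$: $v$ has $0/1$ exponents and $v={\bf x}_{[n]\setminus\{i,j\}}$ for an edge $\{i,j\}$. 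Then $b_r+b_s+b_x\ge1$ forces constraints. Since $B$ is not a clique, choose a non-edge $b,b'$ in $B$; then $c_b+c_{b'}\ge3$ with $a_b=a_{b'}=2$ forces $b_b+b_{b'}\le1$, i.e. $v$ omits $b$ or $b'$. The triple $\{r,s,\cdot\}$ conditions on $w$ (sum $\ge3$, with $a_r=a_s=1$) then force $v$ to omit $r$ or $s$ as well, so $v$ would omit three vertices, a contradiction.
\item For $u=vw$ with $v,w\in I^{(2)}$: we need $b_r+b_s+b_x\ge2$ and $c_r+c_s+c_x\ge2$ for every $x$. Using $x\in B$ with total $4$ gives tightness, so $b_r+b_s+b_x=2$ for all $x\in B$; hence $b_x$ is constant on $B$. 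A non-edge in $B$ forces $b_x=c_x=1$ on $B$, so $b_r+b_s=1$, say $b_r=1$, $b_s=0$. Non-domination gives $p\notin N(r)\cup N(s)$ with $a_p=3$, and then $b_p+b_s\ge2$ and $c_p+c_r\ge2$ give $b_p\ge2$ and $c_p\ge2$, contradicting $b_p+c_p=3$.
\end{itemize}

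\textbf{Main obstacle.} The main difficulty is the $I\,I^{(3)}$ exclusion, together with verifying that the case analysis of Theorem~\ref{symbolic Rees} transfers to $k=4$.
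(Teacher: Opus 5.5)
Your overall route is the paper's. You reduce to case (iii) of the proof of Theorem~\ref{symbolic Rees} to get $\alpha=\beta=1$, $\gamma=2$, pin down the exponents by minimality, and rule out the degenerate configurations by explicit factorizations. You then show $u\notin\m I^{(4)}+I\,I^{(3)}+(I^{(2)})^2$.

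Your factorizations for $B$ a clique and for $\{r,s\}$ dominating are the paper's, once you set $C_r=C\cap N_G(r)$ and $C_s=C\cap N_G(s)$. As literally written, $N_G(r)\setminus N_G(s)$ contains $s$. The $B=\emptyset$ factorization is correct but unnecessary: $\gamma=2$ is attained at a vertex that your own dichotomy places in $N_G(r)\cap N_G(s)$. On the small-$k$ worry, the only other use of $k\geq 7$ (the bound $2p+q\leq 2+a_i+a_j+a_\ell-k$ in Case 1) already holds for $k\geq 2$. Your $(I^{(2)})^2$ exclusion is correct and is essentially the paper's argument.

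There is a slip in the $I\,I^{(3)}$ exclusion as written. Reducing to $v={\bf x}_{[n]\setminus\{i,j\}}$ with $\{i,j\}\in E(G)$ is fine, since you can absorb the cofactor of a generator into $w$. You then correctly get that $v$ omits exactly one vertex of the non-edge, say $b$. But the triple $\{r,s,b\}$ only gives $c_r+c_s\geq 1$, so $v$ omits $r$ or $s$. That is two omitted vertices, not three, and no contradiction: $\{b,s\}$ is an edge, so ${\bf x}_{[n]\setminus\{b,s\}}$ is a genuine generator of $I$.

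The contradiction comes from the other vertex $b'$ of the non-edge, which $v$ does not omit. Then $c_{b'}=1$, and the triple condition $c_r+c_s+c_{b'}\geq 3$ forces $c_r=c_s=1$. So $v$ omits both $r$ and $s$, hence the three vertices $b,r,s$, which is impossible.

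With this fix your argument closes, and it is slightly more economical than the paper's. The paper invokes the non-dominating hypothesis at this step (a vertex $p\in C$ adjacent to neither $r$ nor $s$). Your corrected version shows $u\notin I\,I^{(3)}$ using only that $B$ is not a clique. Non-domination is then needed only to exclude $(I^{(2)})^2$, and it is in fact necessary there, by your own factorization in the dominating case.
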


\begin{proof}
Let $ut^4$ be a minimal generator of  $\mathcal{R}_{s}(I_{c}(G))$. Write $u={\bf x}^{\bf a}$, and let $\alpha,\beta$ and $\gamma$ be as in the proof of Theorem \ref{symbolic Rees}. 	
Considering the proof of Theorem~\ref{symbolic Rees},   
the minimality of $ut^4$ implies that condition (iii) of the proof happens, that is $\alpha+\beta+\gamma=k=4$,  $\alpha=\beta<\gamma$ and $\gamma\leq 2\alpha$. Then we have $\alpha=\beta=1$ and $\gamma=2$. Let $\alpha=a_r$ and $\beta=a_s$. Since $a_r+a_s=2<4$, we have $\{r,s\}\in E(G)$. Moreover, for any $\ell\neq r,s$, we have $a_\ell\geq 4-(a_r+a_s)=2$. Set  $B=\{i:\, a_i=2\}$, and $C=[n]\setminus (\{r,s\}\cup B)$. The minimality of $ut^4$ implies that $a_i=3$ for any $i\in C$. Thus  
$u=x_rx_s{\bf x}_B^2{\bf x}_C^3$. Since $\gamma=2$, we have $B\neq \emptyset$. For any $i\in B$, from the equalities $a_i+a_r=3$ and $a_i+a_s=3$ and that $u\in I^{(4)}$ we obtain $i\in N_G(r)\cap N_G(s)$. Conversely, for any $i\in N_G(r)\cap N_G(s)$, if $i\notin B$, then $a_i=3$ and $u/x_i\in I^{(4)}$, which contradicts to the minimality of $ut^4$. Thus $B=N_G(r)\cap N_G(s)$. By contradiction assume that $B$ is a clique. Then $ut^4=({\bf x}_B{\bf x}_C)t.(x_rx_s{\bf x}_B{\bf x}_C^2)t^3=({\bf x}_{[n]\setminus\{r,s\}})t.(x_rx_s{\bf x}_B{\bf x}_C^2)t^3\in It.I^{(3)}t^3$, contradicting to the minimality of $ut^4$. Finally, we need to show that $\{r,s\}$ is not a dominating edge of $G$. Suppose on the contrary that $[n]=N_G(r)\cup N_G(s)$. We set $C_1=C\cap N_G(r)$ and $C_2=C\setminus C_1$. Our assumption implies that $C_2\subseteq N_G(s)$.
Then $ut^4=(x_r{\bf x}_B{\bf x}_{C_1}^2{\bf x}_{C_2})t^2.(x_s{\bf x}_B{\bf x}_{C_1}{\bf x}_{C_2}^2)t^2\in I^{(2)}t^2.I^{(2)}t^2$, a contradiction. Thus  $\{r,s\}$ is not a dominating edge, as claimed.

Conversely, let $u=x_rx_s{\bf x}_B^2{\bf x}_C^3$, where $\{r,s\}\in E(G)$ is not a dominating edge of $G$, $B=N_G(r)\cap N_G(s)\neq \emptyset$ is not a clique of $G$ and $C=[n]\setminus (\{r,s\}\cup B)$. Then it is easily seen that $u\in I^{(4)}$.  We show that $ut^4$ is a minimal generator of $\mathcal{R}_{s}(I_{c}(G))$. To this aim we need to show that $u\notin \m I^{(4)}+I.I^{(3)}+ (I^{(2)})^2$. It is straightforward to check that $u\notin \m I^{(4)}$. By contradiction assume that $u=vw$ for monomials $v\in I$ and $w\in I^{(3)}$. Write $v={\bf x}^{\bf b}$ and $w={\bf x}^{\bf c}$.
Without loss of generality we may assume that $\{r,s\}=\{1,2\}$. So $b_1+c_1=b_2+c_2=1$. Since $B$ is not a clique, there exist $j_1,j_2\in B$ such that $\{j_1,j_2\}\notin E(G)$. This yields that $c_{j_1}+c_{j_2}\geq 3$. So we may assume that $c_{j_1}\geq 2$. Since $j_1\in B$, we obtain $c_{j_1}=2$, and so $b_{j_1}=0$. Therefore, $b_1+b_2=b_1+b_2+b_{j_1}\geq 1$. Without loss of generality assume that $b_1\geq 1$. Then $b_1=1$ and $c_1=0$. 
Thus $c_2\geq 3-(c_1+c_{j_1})=1$ and so  $c_2=1$ and $b_2=0$. 
Since $\{1,2\}\in E(G)$ is not a dominating edge of $G$, there exists $p\in C$ such that $\{p,1\},\{p,2\}\notin E(G)$.  It follows that $b_p=b_p+b_2\geq 1$. Therefore, $c_p=3-b_p\leq 2$. We conclude that $c_p+c_1\leq 2$, which contradicts to $\{p,1\}\notin E(G)$. Thus $u\notin I.I^{(3)}$.

Finally, we show that $u\notin (I^{(2)})^2$. Suppose on the contrary that $u=vw$ for some monomials $v,w\in I^{(2)}$. Write $v={\bf x}^{\bf b}$ and $w={\bf x}^{\bf c}$. We have $b_1+c_1=b_2+c_2=1$. If $b_1=b_2=0$, then $b_i=b_i+b_1+b_2\geq 2$ for any $i\in B$. Hence, $c_i=0$ for any $i\in B$. Since $B\neq \emptyset$ is not a clique, there exist $j_1,j_2\in B$ such that $\{j_1,j_2\}\notin E(G)$. But $c_{j_1}+c_{j_2}=0$, which is a contradiction. Similarly, if  $c_1=c_2=0$, the same argument gives a contradiction. So we may assume that $b_1=1$ and $b_2=0$. Thus $c_1=0$. By assumption, there exists $p\in C$ such that $\{p,1\},\{p,2\}\notin E(G)$.
This yields that $b_p=b_p+b_2\geq 2$ and 
$c_p=c_p+c_1\geq 2$. Thus $3=b_p+c_p\geq 4$, a contradiction. So $u\notin (I^{(2)})^2$, as desired. This completes the proof. 
\end{proof}

Recall that a graph $G$ is called a {\em diamond} if it is isomorphic to $K_{4}-e$, for some $e\in E(K_4)$. The shared edge of the two triangles in a diamond is called the {\em central edge} of the diamond. Moreover, $G$ is said to be {\it diamond-free} if it contains no induced subgraph which is a diamond.  

\begin{cor}\label{no-degree-four}
Let $G$ be a graph. Then the graded $S$-algebra $\mathcal{R}_{s}(I_{c}(G))$ has no minimal generator of degree $4$ if and only if for any induced diamond $D$ in $G$, the central edge of $D$ is a dominating edge of $G$.     
\end{cor}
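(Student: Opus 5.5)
By Theorem~\ref{degree-four}, $\mathcal{R}_{s}(I_{c}(G))$ has a minimal generator of degree $4$ if and only if there is an edge $\{r,s\}\in E(G)$ such that $B=N_G(r)\cap N_G(s)$ is nonempty and not a clique, and $\{r,s\}$ is not a dominating edge of $G$. So the plan is to show that the existence of such an edge is equivalent to the existence of an induced diamond whose central edge is not dominating. Taking contrapositives on both sides then gives the corollary.

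First, suppose $\{r,s\}\in E(G)$ satisfies the conditions of Theorem~\ref{degree-four}. Since $B$ is not a clique, there are $j_1,j_2\in B$ with $\{j_1,j_2\}\notin E(G)$. The edges among $\{r,s,j_1,j_2\}$ are $rs$, $rj_1$, $rj_2$, $sj_1$ and $sj_2$, and $j_1j_2$ is the only missing pair. Hence $G_{\{r,s,j_1,j_2\}}\cong K_4-e$ is an induced diamond. Its two triangles $\{r,s,j_1\}$ and $\{r,s,j_2\}$ share the edge $\{r,s\}$, so $\{r,s\}$ is the central edge, and by hypothesis it is not dominating. This produces an induced diamond whose central edge is not dominating.

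Conversely, let $D$ be an induced diamond on $\{r,s,j_1,j_2\}$ with central edge $\{r,s\}$ and missing edge $\{j_1,j_2\}$, and suppose $\{r,s\}$ is not dominating. Then $j_1,j_2\in N_G(r)\cap N_G(s)=B$. So $B\neq\emptyset$, and $B$ is not a clique because $\{j_1,j_2\}\notin E(G)$. Theorem~\ref{degree-four} now yields the minimal generator $x_rx_s{\bf x}_B^2{\bf x}_C^3t^4$ with $C=[n]\setminus(\{r,s\}\cup B)$, which is of degree $4$.

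No step here is a real obstacle. The only point needing care is identifying the central edge: one checks that in an induced diamond the unique non-edge is $\{j_1,j_2\}$, so the central edge $\{r,s\}$ is exactly the pair adjacent to both other vertices, and it matches the edge in Theorem~\ref{degree-four}.
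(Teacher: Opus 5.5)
Your proof is correct and follows essentially the same route as the paper: in both directions you use Theorem~\ref{degree-four} together with the observation that $r,s$ and two non-adjacent vertices $j_1,j_2\in N_G(r)\cap N_G(s)$ span an induced diamond with central edge $\{r,s\}$, and conversely. Your explicit check that the central edge is exactly the pair complementary to the unique non-edge is a detail the paper leaves implicit.
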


\begin{proof}
 By Theorem~\ref{degree-four}, any possible minimal generator of $\mathcal{R}_{s}(I_{c}(G))$ of degree $4$ is of the form $ut^4=x_rx_s{\bf x}_B^2{\bf x}_C^3t^4$, where $\{r,s\}$ is not a dominating edge of $G$, $B=N_G(r)\cap N_G(s)\neq \emptyset$, and   $G_{B}$ is not a complete graph. This means that $B$ contains two non-adjacent vertices $j_1,j_2$, and so the induced subgraph of $G$ on $\{r,s,j_1,j_2\}$ is a diamond with the central edge $\{r,s\}$. 
Now, assume that the central edge of any induced diamond in $G$ is a dominating edge of $G$. Then no monomial of the form $ut^4=x_rx_s{\bf x}_B^2{\bf x}_C^3t^4$, with the conditions as above, is a minimal generator of $\mathcal{R}_{s}(I_{c}(G))$. So  $\mathcal{R}_{s}(I_{c}(G))$  has no generator of degree $4$. 

Conversely, assume that $G$ has an induced diamond $D=\{i_1,i_2,i_3,i_4\}$ whose central edge $\{i_1,i_2\}$ is not dominating. Then $i_3,i_4\in B=N_G(i_1)\cap N_G(i_2)$ and $\{i_3,i_4\}\notin E(G)$. So $B$ is not a clique. Thus by Theorem~\ref{degree-four}, the monomial $ut^4=x_{i_1}x_{i_2}{\bf x}_B^2{\bf x}_C^3t^4$ with $C=[n]\setminus (\{i_1,i_2\}\cup B)$ is a  minimal generator of $\mathcal{R}_{s}(I_{c}(G))$ of degree $4$.
\end{proof}

\begin{cor}\label{diamond-free}
Let $G$ be a diamond-free graph. Then the graded $S$-algebra $\mathcal{R}_{s}(I_{c}(G))$ is generated in degree at most 3. More precisely, we have $$\mathcal{R}_{s}(I_{c}(G))=S[I_{c}(G)t,I_{c}(G)^{(2)}t^2, I_{c}(G)^{(3)}t^3].$$
\end{cor}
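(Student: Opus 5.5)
By Theorem~\ref{symbolic Rees}, $\mathcal{R}_{s}(I_{c}(G))$ is generated by $S$ together with its minimal generators of degrees $1,2,3,4,6$. So it suffices to show that for a diamond-free graph $G$ there are no minimal generators of degree $4$ and none of degree $6$. The equality $\mathcal{R}_{s}(I_{c}(G))=S[I_{c}(G)t,I_{c}(G)^{(2)}t^2,I_{c}(G)^{(3)}t^3]$ then follows directly from the description in Theorem~\ref{symbolic Rees}.

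Degree $4$ is handled by Corollary~\ref{no-degree-four}. A diamond-free graph has no induced diamond at all, so the condition there holds vacuously. Hence there is no minimal generator of degree $4$. Alternatively, one can argue from Theorem~\ref{degree-four}: a generator there needs an edge $\{r,s\}$ with $B=N_G(r)\cap N_G(s)$ not a clique. Two non-adjacent $j_1,j_2\in B$ would give an induced diamond on $\{r,s,j_1,j_2\}$, which is impossible.

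For degree $6$, apply Theorem~\ref{degree-six}. A minimal generator $ut^6$ requires a clique $A$ with $|A|\geq 3$ and a set $B\neq\emptyset$ of vertices outside $A$ adjacent to every vertex of $A$. It also requires $(G_B)^c$ to have no isolated vertices, so $B$ contains two non-adjacent vertices $j_1,j_2$. Pick any two vertices $i_1,i_2\in A$. They are adjacent, and both are adjacent to $j_1$ and to $j_2$, while $j_1j_2\notin E(G)$. Thus $\{i_1,i_2,j_1,j_2\}$ induces a diamond, contradicting diamond-freeness. Hence there are no degree-$6$ minimal generators.

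The only point needing care is verifying that the four chosen vertices really induce a diamond, i.e., exactly one non-edge. This is immediate from the definitions of $A$ and $B$, so I expect no real obstacle; the work is entirely carried by the earlier theorems.
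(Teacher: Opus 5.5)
Your proposal is correct and follows the paper's proof: you rule out minimal generators in degrees $4$ and $6$ and then conclude with Theorem~\ref{symbolic Rees}, using Corollary~\ref{no-degree-four} for degree $4$. The only difference is in degree $6$. You apply Theorem~\ref{degree-six} directly and exhibit the induced diamond on $\{i_1,i_2,j_1,j_2\}$, while the paper goes through Corollary~\ref{no-degree-six} with the remark that diamond-free graphs are $(K_3\vee C_{2r+1}^c)$-free, which rests on the same observation.
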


\begin{proof}
It is easily seen that if $G$ is diamond-free, then it is $(K_3\vee C_{2r+1}^c)$-free. So by Corollary~\ref{no-degree-six}, the algebra $\mathcal{R}_{s}(I_{c}(G))$ has no minimal generator of degree $6$. Moreover, Corollary~\ref{no-degree-four} shows that $\mathcal{R}_{s}(I_{c}(G))$ admits no minimal generator of degree $4$ either. Thus by Theorem~\ref{symbolic Rees}, $\mathcal{R}_{s}(I_{c}(G))$ is generated in degree at most 3. 
\end{proof}

A vertex $i$ in a graph $G$ is called a {\em cone} vertex, if $N_G[i]=V(G)$. 

\begin{thm}\label{degree-three} 
		Let $G$ be a graph on $[n]$. Then $ut^3$ is a minimal generator of  $\mathcal{R}_{s}(I_{c}(G))$ if and only if $u={\bf x}_A{\bf x}_B^2$, where $A$ is a maximal clique of $G$  and $B=[n]\setminus A$ such that one of the following conditions holds:
\begin{enumerate}
    \item[\textup{(i)}] $|A|\geq 4$.
    \item[\textup{(ii)}] $|A|=3$ and $A$ contains no cone vertex. 
\end{enumerate}      
\end{thm}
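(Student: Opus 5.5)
I will follow the pattern of the proofs of Theorems~\ref{degree-six} and~\ref{degree-four}. For a monomial ${\bf x}^{\bf a}$, membership in $I^{(k)}$ (with $I=I_c(G)$) means $a_i+a_j\geq k$ for every non-edge $\{i,j\}$ and $a_i+a_j+a_\ell\geq k$ for all distinct $i,j,\ell$. By Theorem~\ref{symbolic Rees} and its proof, the generator $ut^3$ is minimal if and only if
$$u\in I^{(3)}\setminus\big(\m I^{(3)}+I\cdot I^{(2)}\big).$$

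\emph{Forward direction.} Let $u={\bf x}^{\bf a}$ and let $\alpha\le\beta\le\gamma$ be the three smallest exponents.
\begin{enumerate}
\item If $\alpha+\beta+\gamma>3$, Case~1 of the proof of Theorem~\ref{symbolic Rees} gives a splitting. It needs $k\geq7$ only for the bound with $p\geq2$, and I will check directly that the argument still works for $k=3$. This shows $u\in I^{(2)}I$, so we may assume $\alpha+\beta+\gamma=3$.
\item If $\alpha<\beta$, the set $S=\{\alpha+1,3-\alpha\}$ from case~(i) again yields a splitting of type $(2,1)$. This rules out $(\alpha,\beta,\gamma)=(0,1,2)$ and $(0,0,3)$.
\item The case $\alpha=\beta<\gamma$ with $\alpha=0$ forces $\gamma=3$, which is case~(ii) and again splits.
\item Hence $\alpha=\beta=\gamma=1$ (case~(iv) with $\alpha$ odd, $|S|=3$).
\end{enumerate}
Now set $A=\{i: a_i=1\}$, so $|A|\ge3$. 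Every pair $i,j\in A$ has $a_i+a_j=2<3$, so $A$ is a clique. No exponent is $0$, since otherwise $\alpha=0$. Minimality then forces $a_i=2$ for $i\notin A$: exponents $\ge3$ could be lowered to $2$, because the triple condition with two indices of $A$ gives $1+1+2=4\geq3$ and pairs with $A$ give $1+2=3$. Thus $u={\bf x}_A{\bf x}_B^2$ with $B=[n]\setminus A$. If $A$ were not maximal, some $j\in B$ would be adjacent to all of $A$. Lowering $a_j$ to $1$ keeps all pair conditions, and every triple sum is at least $3$ since all exponents are $\geq1$. So $u/x_j\in I^{(3)}$, a contradiction.

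It remains to exclude $|A|=3$ with a cone vertex $i_1\in A$, say $A=\{i_1,i_2,i_3\}$. I will exhibit the factorization
$$u={\bf x}_{[n]\setminus\{i_2,i_3\}}\cdot x_{i_2}x_{i_3}{\bf x}_B,$$
where the first factor lies in $I$ because $\{i_2,i_3\}$ is an edge. For the second factor $v=x_{i_2}x_{i_3}{\bf x}_B$ I must check $v\in I^{(2)}$:
\begin{itemize}
\item its exponent at $i_1$ is $0$, and all pairs involving $i_1$ are edges because $i_1$ is a cone;
\item triples containing $i_1$ use two other exponents, each equal to $1$, so their sum is $2$;
\item the remaining pairs and triples have all exponents equal to $1$, so pair sums and triple sums are at least $2$.
\end{itemize}
Hence $u\in I\cdot I^{(2)}$, a contradiction.

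\emph{Converse.} Take $u={\bf x}_A{\bf x}_B^2$ with $A$ a maximal clique. First, $u\in I^{(3)}$: every non-edge meets $B$, giving a pair sum $\geq3$, and all triple sums are $\geq3$. Second, $u\notin\m I^{(3)}$:
\begin{itemize}
\item lowering $a_i$ for $i\in A$ produces a triple inside $A$ with sum $2$;
\item lowering $a_j$ for $j\in B$ violates a non-edge $\{i,j\}$ with $i\in A$, which exists by maximality of $A$.
\end{itemize}

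The main obstacle is showing $u\ne vw$ with $v={\bf x}^{\bf b}\in I$ and $w={\bf x}^{\bf c}\in I^{(2)}$. On $A$ we have $b_i+c_i=1$, and every triple in $A$ needs $b$-sum $\ge1$ and $c$-sum $\ge2$. This forces each triple of $A$ to carry exactly one $b$-entry equal to $1$.
\begin{itemize}
\item If $|A|\ge4$: take a triple containing the $b$-positive vertex and another triple avoiding it (possible with four elements, or with two positive $b$'s in a 4-set) to get a contradiction, as in case~(i) of Theorem~\ref{degree-six}.
\item If $|A|=3$: exactly one $i_1\in A$ has $b_{i_1}=1$, $c_{i_1}=0$. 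Since $i_1$ is not a cone, there is $p\in B$ with $\{i_1,p\}\notin E(G)$. Then $c_p\ge2$, so $b_p=0$. Now the triple $\{p,i_2,i_3\}$ has $b$-sum $0<1$, contradicting $v\in I$.
\end{itemize}
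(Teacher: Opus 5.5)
Your proposal is correct and follows essentially the paper's route. You reduce, via the case analysis in the proof of Theorem~\ref{symbolic Rees}, to $\alpha=\beta=\gamma=1$, identify $A$ as a maximal clique with exponent $2$ on $B$, and exclude the cone case with the factorization ${\bf x}_{[n]\setminus\{i_2,i_3\}}\cdot{\bf x}_{[n]\setminus\{i_1\}}$; for the converse you use $b_i+c_i=1$ on $A$ together with a non-neighbour of the non-cone vertex. One small slip: $(0,0,3)$ has $\alpha=\beta$, so it is covered by case~(ii), as in your step~3, not by case~(i) as step~2 claims.
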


\begin{proof}
Let $ut^3$ be be a minimal generator of  $\mathcal{R}_{s}(I_{c}(G))$. Write $u={\bf x}^{\bf a}$, and let $\alpha,\beta$ and $\gamma$ be as in the proof of Theorem \ref{symbolic Rees}. 	
Considering the proof of Theorem~\ref{symbolic Rees},   
the minimality of $ut^3$ implies that condition (iv) of the proof happens, that is $\alpha+\beta+\gamma=k=3$,  and $\alpha=\beta=\gamma$. So we obtain $\alpha=\beta=\gamma=1$. We set $A=\{i:\, a_i=1\}$ and $B=[n]\setminus A$. From $u\in I^{(3)}$ it is easily seen that  $A$ is a clique of $G$, and since $\alpha=\beta=\gamma=1$, we have $|A|\geq 3$. Suppose on the contrary that $A$ is not a maximal clique. Then there exists $j\in B$ such that $A\cup\{j\}$ is a clique. We have $a_j\geq 2$. Since $\{i,j\}\in E(G)$ for any $i\in A$, it follows that $u/x_j\in I^{(3)}$. This contradicts to the minimality of $ut^3$. Hence, $A$ is a maximal clique, as desired. The minimality of $ut^k$ implies that $a_j=2$ for any $j\in B$. So $u={\bf x}_A{\bf x}_B^2$.
If $|A|\geq 4$, then (i) holds and we are done. Now, suppose $|A|=3$. Suppose on the contrary that $i_1\in A$ is a cone. Let $A\setminus \{i_1\}=\{i_2,i_3\}$. Then $$ut^3=(({\bf x}_A/x_{i_2}x_{i_3}){\bf x}_B)t.(x_{i_2}x_{i_3}{\bf x}_B)t^2=({\bf x}_{[n]\setminus \{i_2,i_3\}})t.(x_{i_2}x_{i_3}{\bf x}_B)t^2\in It.I^{(2)}t^2,$$
which contradicts to the minimality of $ut^3$.

Conversely, consider a monomial $u={\bf x}_A{\bf x}_B^2$, where $A$ is a maximal clique of $G$ satisfying (i) or (ii) and $B=[n]\setminus A$. Then it is easily seen that $u\in I^{(3)}$.  We need to show that $u\notin \m I^{(3)}+I.I^{(2)}$. For any $i\in A$, there exist distinct $j,r\in A\setminus\{i\}$ such that $a_i+a_j+a_r=3$. Hence, $u/x_i\notin  I^{(3)}$. Now, consider $i\in B$. Since $A$ is a maximal clique, there exists $j\in A$ such that $\{i,j\}\notin E(G)$. Since $a_i+a_j=3$, we obtain $u/x_i\notin  I^{(3)}$. Hence, $u\notin \m I^{(3)}$.    

Suppose now by contradiction that $u\in I.I^{(2)}$ and let $u=vw$ for some monomials $v\in I$ and $w\in I^{(2)}$. We may assume that $ A=[m]$ for some $m\geq 3$. Write $v={\bf x}^{\bf b}$ and $w={\bf x}^{\bf c}$. Then $b_i+c_i=1$ for any $i\in A$. Since $c_1+c_2+c_3\geq 2$, and $c_1,c_2,c_3\leq 1$, we may assume that $c_1=c_2=1$. This implies that $b_1=b_2=0$, $b_i=1$, and $c_i=0$ for any $3\leq i\leq m$. If $m\geq 4$, then $c_1+c_3+c_4=c_1=1<2$, which contradicts to  $w\in I^{(2)}$. Suppose now that $m=3$. Then by (ii), $A$ contains no cone. 
For any $i\in B$ we have $b_i=b_i+b_1+b_2\geq 1$, and hence $c_i=2-b_i\leq 1$. Since $3$ is not a cone, there exists $i\in B$ such that $\{3,i\}\notin E(G)$. But $c_3+c_i=c_i\leq 1$, which contradicts to $w\in I^{(2)}$. This concludes the proof.   
\end{proof}

The \emph{clique number} of a graph $G$, denoted $\omega(G)$, is the size of the largest clique in $G$. The following corollary follows from Theorem~\ref{degree-three}. Observe that when $\omega(G)=3$, then  $G$ has a cone vertex if and only if every triangle of $G$ contains a cone vertex. 

\begin{cor}\label{no-degree-three}
Let $G$ be a graph. Then the graded $S$-algebra $\mathcal{R}_{s}(I_{c}(G))$ has no minimal generator of degree $3$ if and only if $G$ is  triangle-free, or $\omega(G)=3$ and $G$ has a cone vertex.
\end{cor}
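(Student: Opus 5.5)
By Theorem~\ref{degree-three}, $\mathcal{R}_{s}(I_{c}(G))$ has a minimal generator of degree $3$ if and only if $G$ has a maximal clique $A$ with either $|A|\geq 4$, or $|A|=3$ and $A$ containing no cone vertex. So the plan is to show that no such clique exists exactly when $G$ is triangle-free, or $\omega(G)=3$ and $G$ has a cone vertex. I would prove the two directions separately.

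For sufficiency, first suppose $G$ is triangle-free. Then every clique has size at most $2$, so neither (i) nor (ii) of Theorem~\ref{degree-three} can hold. Next suppose $\omega(G)=3$ and $G$ has a cone vertex $c$. Then no clique has size $\geq 4$, which rules out (i). For (ii), let $A$ be any maximal clique with $|A|=3$. Since $c$ is adjacent to every other vertex, $A\cup\{c\}$ is a clique, so maximality forces $c\in A$. Hence $A$ contains a cone vertex and (ii) fails as well.

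For necessity, assume there is no degree-$3$ minimal generator and that $G$ is not triangle-free, so $\omega(G)\geq 3$. If $\omega(G)\geq 4$, then a maximum clique is maximal with size at least $4$, and condition (i) produces a generator, a contradiction. Hence $\omega(G)=3$. Any triangle is then a maximal clique of size $3$, so the failure of (ii) means every triangle contains a cone vertex. In particular $G$ has a cone vertex. This is the observation stated just before the corollary, and the argument shows it only needs the forward implication.

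I expect no real obstacle. The only point needing care is that when $\omega(G)=3$, maximal cliques of size $3$ are exactly the triangles, and a cone vertex must lie in every maximal clique.
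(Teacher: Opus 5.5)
Your proposal is correct and follows the paper's route. The paper reads the corollary off directly from Theorem~\ref{degree-three}, using the observation (stated just before the corollary) that when $\omega(G)=3$, $G$ has a cone vertex if and only if every triangle contains one; your maximality argument and your case split $\omega(G)\le 2$, $\omega(G)\ge 4$, $\omega(G)=3$ supply exactly the details that deduction leaves implicit.
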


The following result characterizes graphs $G$ for which $\mathcal{R}_{s}(I_{c}(G))$ is generated in degree at most $2$. 
 
\begin{cor}\label{degree-at-most-two}
Let $G$ be a graph. Then $\mathcal{R}_{s}(I_{c}(G))$ is generated in degree at most $2$ if and only if either $G$ is triangle-free, or $\omega(G)=3$ and $G$ has a cone vertex.
\end{cor}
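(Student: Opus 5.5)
By Theorem~\ref{symbolic Rees}, $\mathcal{R}_{s}(I_{c}(G))$ is generated by its minimal generators in degrees $1,2,3,4,6$. Hence it is generated in degree at most $2$ if and only if it has no minimal generators of degree $3$, $4$ or $6$. Corollary~\ref{no-degree-three} already says that the absence of degree-$3$ generators is equivalent to: $G$ is triangle-free, or $\omega(G)=3$ and $G$ has a cone vertex. The ``only if'' direction is therefore immediate. For the converse, I will show that under either of these two conditions there are also no minimal generators of degree $4$ or $6$.

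\emph{Degree $6$.} By Theorem~\ref{degree-six}, a degree-$6$ minimal generator requires a clique $A$ with $|A|\geq 3$ and a nonempty set $B$ of common neighbours of $A$ such that $(G_B)^c$ is non-bipartite. If $G$ is triangle-free, there is no such $A$. If $\omega(G)=3$, then $|A|=3$ and $A\cup\{j\}$ is a $K_4$ for any $j\in B$, which is impossible. So $B=\emptyset$ and no degree-$6$ generator exists.

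\emph{Degree $4$.} By Corollary~\ref{no-degree-four}, it suffices to show that the central edge of every induced diamond is dominating. A triangle-free graph contains no diamond, so that case is trivial. Now let $\omega(G)=3$ and let $c$ be a cone vertex. Take an induced diamond on $\{r,s,j_1,j_2\}$ with central edge $\{r,s\}$, so that $j_1,j_2$ are nonadjacent. I would argue as follows.
\begin{itemize}
\item If $c\notin\{r,s,j_1,j_2\}$, then $\{c,r,s,j_1\}$ is a $K_4$, contradicting $\omega(G)=3$.
\item $c\neq j_1,j_2$, since $j_1$ and $j_2$ are not adjacent.
\item So $c\in\{r,s\}$, and then $N_G(r)\cup N_G(s)\supseteq N_G[c]\setminus\{r,s\}$. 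Adding $r\in N_G(s)$ and $s\in N_G(r)$ gives $N_G(r)\cup N_G(s)=[n]$, so $\{r,s\}$ is dominating.
\end{itemize}
Corollary~\ref{no-degree-four} then rules out degree-$4$ generators.

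The only real content is the diamond argument for the cone vertex. The main point to check there is that a cone vertex outside the diamond is forced into a $K_4$, which is what uses $\omega(G)=3$ in an essential way.
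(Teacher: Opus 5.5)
Your proposal is correct and follows the paper's proof essentially step for step: the ``only if'' direction and the degree-$3$ part come from Corollary~\ref{no-degree-three}, and your cone-vertex diamond argument for degree $4$ is exactly the paper's. The only cosmetic difference is that you rule out degree-$6$ generators directly from Theorem~\ref{degree-six} (a common neighbour of a triangle would create a $K_4$), whereas the paper cites Corollary~\ref{no-degree-six}; this rests on the same observation.
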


\begin{proof}
If $G$ is  triangle-free, then it is $(K_3\vee C_{2r+1}^c)$-free and diamond-free. So by Corollary~\ref{no-degree-six}, Corollary~\ref{diamond-free} and  Corollary~\ref{no-degree-three}, $\mathcal{R}_{s}(I_{c}(G))$ is generated in degree at most $2$.

Now, let $\omega(G)=3$, and let $s$ be a cone vertex of $G$. Then by Corollary~\ref{no-degree-six}, $\mathcal{R}_{s}(I_{c}(G))$ has no minimal generator of degree $6$. 
Consider an induced diamond $D$ in $G$ with $V(D)=\{i_1,i_2,i_3,i_4\}$, with the central edge $\{i_1,i_2\}$. Since $\{i_3,i_4\}\notin E(G)$, we have $s\neq i_3,i_4$. If $s\notin \{i_1,i_2\}$, then $i_1,i_2,i_3,s$ form a clique in $G$, which contradicts to $\omega(G)=3$. Hence, $s\in \{i_1,i_2\}$. This means that either $i_1$ or $i_2$ is a cone, yielding that $\{i_1,i_2\}$ is a dominating edge of $G$. So we have shown that for any induced diamond $D$, the central edge of $D$ is a dominating edge of $G$. 
So by Corollary~\ref{no-degree-four}, $\mathcal{R}_{s}(I_{c}(G))$ has no minimal generator of degree $4$. Finally, by Corollary~\ref{no-degree-three}, $\mathcal{R}_{s}(I_{c}(G))$ has no minimal generator of degree $3$. Thus $\mathcal{R}_{s}(I_{c}(G))$ is generated in degree at most $2$.

The converse statement follows from  Corollary~\ref{no-degree-three}.
\end{proof}

One should mention that when $G$ is triangle-free, then 
$\mathcal{R}_{s}(I_{c}(G))$ is just the symbolic Rees algebra of a cover ideal, and it is known by \cite[Theorem 5.1]{HHT} that such algebra is generated in degree at most $2$.

The following theorem determines the generators of $\mathcal{R}_{s}(I_{c}(G))$ of degree $2$. Recall that the \emph{matching number} of a graph $G$ is the maximum size of a set of pairwise disjoint edges in $G$.

\begin{thm}\label{degree-two} 
		Let $G$ be a graph on $[n]$. Then $ut^2$ is a minimal generator of  $\mathcal{R}_{s}(I_{c}(G))$ if and only if $u$ is of one of the following  forms:
\begin{enumerate}
    \item[\textup{(i)}]  $u={\bf x}_A{\bf x}_B^2$, where $A=N_G(i)$ for some vertex $i$ with $\deg_G(i)\geq 3$ and $B=[n]\setminus N_G[i]$.  
    \item[\textup{(ii)}] $u={\bf x}_{[n]}$, if $G$ has no cone, and if the matching number of $G$ is one, whenever $n=4$.   
\end{enumerate}        
\end{thm}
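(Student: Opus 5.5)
The plan is to work directly with the exponent description of $I^{(2)}$, $I=I_c(G)$, coming from (\ref{sym-comp}). A monomial ${\bf x}^{\bf a}$ lies in $I^{(2)}$ if and only if $a_i+a_j\geq 2$ for every $\{i,j\}\notin E(G)$ and $a_i+a_j+a_\ell\geq 2$ for all pairwise distinct $i,j,\ell$. Since $\mathcal{R}_s(I)$ is generated by its components of degree at most $6$ (Theorem~\ref{symbolic Rees}), $ut^2$ is a minimal generator exactly when $u\in I^{(2)}\setminus(\m I^{(2)}+I^2)$. I will also use throughout that $I^2$ is generated in degree $2(n-2)=2n-4$, so any $u$ of smaller degree cannot lie in $I^2$.

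\emph{Necessity.} Let $u={\bf x}^{\bf a}$ be such that $ut^2$ is a minimal generator. Minimality in $I^{(2)}$ forces $a_i\leq 2$ for all $i$, and I split according to the number of zero exponents.
\begin{enumerate}
\item[(a)] If $a_i=a_j=0$ for some $i\neq j$, the triple conditions force $a_\ell\geq 2$ for all other $\ell$, and the pair condition forces $\{i,j\}\in E(G)$. Hence $u$ is divisible by $({\bf x}_{[n]\setminus\{i,j\}})^2\in I^2$, a contradiction.
\item[(b)] If exactly one exponent vanishes, say $a_i=0$, then every non-neighbour $j$ of $i$ has $a_j\geq 2$, hence $a_j=2$. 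Every neighbour $j$ has $a_j\geq 1$. Lowering a neighbour's exponent to $1$ preserves all the conditions: triples not containing $i$ already sum to at least $3$, and triples through $i$ with another positive exponent still sum to at least $2$. So minimality gives $u={\bf x}_A{\bf x}_B^2$ with $A=N_G(i)$ and $B=[n]\setminus N_G[i]$.
\begin{enumerate}
\item[(b1)] If $\deg_G(i)=1$ with $A=\{j\}$, then $u/x_j={\bf x}_B^2\in I^{(2)}$, contradicting minimality.
\item[(b2)] If $\deg_G(i)=2$ with $A=\{j,k\}$, then $u={\bf x}_{[n]\setminus\{i,j\}}{\bf x}_{[n]\setminus\{i,k\}}\in I^2$, again a contradiction.
\end{enumerate}
Hence $\deg_G(i)\geq 3$, which is form (i).
\item[(c)] If no exponent vanishes, then ${\bf x}_{[n]}\mid u$ and ${\bf x}_{[n]}\in I^{(2)}$, so $u={\bf x}_{[n]}$. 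I use two facts here.
\begin{enumerate}
\item[(c1)] ${\bf x}_{[n]}/x_i\in I^{(2)}$ if and only if $i$ is a cone. This holds because all non-edges then avoid $i$, and every triple through $i$ still has sum at least $2$.
\item[(c2)] For degree reasons, ${\bf x}_{[n]}\in I^2$ forces $n\leq 2n-4$ with equality in degree, hence $n=4$. In that case ${\bf x}_{[4]\setminus e}{\bf x}_{[4]\setminus f}={\bf x}_{[4]}$ holds exactly when $e,f$ are disjoint edges, i.e.\ when the matching number is $2$.
\end{enumerate}
Together these give the conditions in (ii).
\end{enumerate}

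\emph{Sufficiency.} For form (i), membership of $u$ in $I^{(2)}$ is a direct check. Moreover $\deg u=|A|+2|B|=2n-2-|A|<2n-4$ because $|A|\geq 3$, so $u\notin I^2$. To see $u\notin\m I^{(2)}$:
\begin{enumerate}
\item[(1)] for $j\in A$, the triple $\{i,j,k\}$ with $k\in A\setminus\{j\}$ has exponent sum $1$ in $u/x_j$;
\item[(2)] for $j\in B$, the non-edge $\{i,j\}$ has exponent sum $1$ in $u/x_j$.
\end{enumerate}
For form (ii), (c1) gives $u\notin\m I^{(2)}$ when $G$ has no cone, and (c2) gives $u\notin I^2$. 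For $n=3$ the degree-$2$ generators of $I^2$ cannot divide a degree-$3$ squarefree $u$ in a way that yields equality; in any case every connected graph on $3$ vertices has a cone, so this case is vacuous.

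The main obstacle is the minimality reduction in case (b): one must argue carefully that the neighbour exponents can be lowered to $1$ and the non-neighbour exponents are exactly $2$. One must also notice that low degree of $i$ produces a factorization in $I^2$ or a non-minimal monomial, rather than a genuinely new generator.
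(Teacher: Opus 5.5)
Your proposal is correct and follows essentially the same route as the paper: the same split by the number of vanishing exponents, the same factorizations ruling out $\deg_G(i)\le 2$, the cone criterion for ${\bf x}_{[n]}/x_i\in I^{(2)}$, and the degree count against $I^2$ being generated in degree $2n-4$. One small slip is in (c2): ${\bf x}_{[n]}\in I^2$ forces $2n-4\le n$ (not $n\le 2n-4$), i.e.\ $n\le 4$, and $n\ge 4$ must then come from the no-cone hypothesis; your first clause about $n=3$ is also wrong (e.g.\ ${\bf x}_{[3]}\in I_c(K_3)^2$), but your remark that every graph on at most $3$ vertices without isolated vertices has a cone already repairs this.
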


\begin{proof}
First assume that $ut^2$ is a minimal generator of  $\mathcal{R}_{s}(I_{c}(G))$, and write $u={\bf x}^{\bf a}$. Since $a_i+a_j+a_\ell\geq 2$ for any distinct integers $i$, $j$ and $\ell$, we have $a_i=0$ for at most two values of $i$. If there exist distinct $i$ and $j$ such that $a_i=a_j=0$, then $a_\ell\geq 2$ for any $\ell\neq i,j$. Hence $({\bf x}_{[n]}/x_ix_j)^2t^2\in I^2t^2$ divides $ut^2$, which contradicts to the minimality of $ut^2$. Now, assume that $a_i=0$ for precisely one index $i$. We show that $u$ is of the form described in (i). Set $A=\{j\in [n]:\, a_j=1\}$ and $B=\{j\in [n]:\, a_j\geq 2\}$. Since  $ut^2$ is a minimal generator, we have $a_j=2$ for any $j\in B$. Hence,  $u={\bf x}_A{\bf x}_B^2$. For any $j\notin N_G[i]$, we have $a_j=a_j+a_i\geq 2$, and so $j\in B$. Conversely, the minimality of $ut^2$ implies that $B\subseteq [n]\setminus N_G[i]$, and thus $B=[n]\setminus N_G[i]$.
Moreover, for any $j\in A$ since $a_i+a_j=1<2$ we have $j\in N_G(i)$. The minimality of $ut^2$ implies that $N_G(i)\subseteq A$ and so $A=N_G(i)$. 
By contradiction suppose that  $\deg_G(i)\leq 2$. If $\deg_G(i)=1$, say $N_G(i)=\{r\}$, then $(u/x_r)t^2=({\bf x}_{[n]\setminus  \{i,r\}})^2t^2\in I^2t^2$ divides $ut^2$, a contradiction. Let $\deg_G(i)=2$, and $N_G(i)=\{r,s\}$. Then $ut^2=x_rx_s.({\bf x}_{[n]\setminus \{i,r,s\}})^2t^2=({\bf x}_{[n]\setminus  \{i,r\}})t.({\bf x}_{[n]\setminus  \{i,s\}})t\in I^2t^2$, a contradiction. Hence, $\deg_G(i)\geq 3$.   
Thus $u$ is of the form described in (i).  
Now, assume that $a_i\neq 0$ for any $i\in[n]$. From the minimality of $ut^2$, it follows that $a_i=1$ for all $i$, and hence $u={\bf x}_{[n]}$. If $G$ has a cone vertex $i$, then $N_G(i)=[n]\setminus \{i\}$ and it is easily seen that ${\bf x}_{[n]\setminus \{i\}}\in I^{(2)}$ divides $u$, which means that $ut^2$ is not a minimal generator of $\mathcal{R}_{s}(I_{c}(G))$. Moreover, one can see that if $n=4$ and $G$ contains two disjoint edges, then $ut^2={\bf x}_{[n]}t^2\in (It)^2$ is not a minimal generator. This shows that (ii) holds for $u$.    

Conversely, let $u$ satisfies $(i)$. Then
it is straightforward to see that $u\in I^{(2)}$ and $u/x_j\notin I^{(2)}$ for any $j\neq i$. By contradiction, if $u\in I^2$, then ${\bf x}_{[n]\setminus \{r,s\}}{\bf x}_{[n]\setminus \{j,\ell\}}$ divides $u={\bf x}_{[n]\setminus \{i\}}{\bf x}_{[n]\setminus N_G[i]}$ for some edges $\{r,s\},\{j,\ell\}\in E(G)$. Comparing the degrees of both monomials it follows that $|N_G[i]|\leq 3$, which means $\deg_G(i)\leq 2$, a contradiction to the assumption in (i). So $u\notin I^2$, and hence $ut^2$ is minimal as desired.  
Finally, let $u$ satisfies (ii). Since $G$ has no cone, for any vertex $i$, we have $\{i,j\}\notin E(G)$ for some $j\neq i$, and so $u/x_i\notin I^{(2)}$. Moreover, since $G$ has no cone we have $n\geq 4$, and one can see that $u\in I^2$ if and only if $n=4$ and $G$ contains two disjoint edges. This shows that $ut^2$ is a minimal generator.  
\end{proof}

As a corollary of Theorem~\ref{degree-two}, we recover~\cite[Theorem 2.9]{rs2025} (see, also, \cite[Corollary 4.1]{f2026}).

\begin{cor}\label{equlaityPowes}
Let $G$ be a graph on $[n]$. Then $I_c(G)^{(k)}=I_c(G)^k$ for all $k\geq 1$ if and only if $G$ is either $K_2$, $K_3$, $P_3$, $2K_2$, $P_4$ or $C_4$.    
\end{cor}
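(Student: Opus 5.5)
Since $I_c(G)^{(1)}=I_c(G)$ for any graph, the equality $I_c(G)^{(k)}=I_c(G)^k$ for all $k\ge 1$ is equivalent to $\mathcal{R}_s(I_c(G))$ being generated in degree $1$, i.e.\ $\mathcal{R}_s(I_c(G))=S[I_c(G)t]$. So the plan is to show that this happens exactly for the six listed graphs, by combining Theorems~\ref{degree-six}, \ref{degree-four}, \ref{degree-three} and~\ref{degree-two}. First, $\mathcal{R}_s(I_c(G))$ is standard graded if and only if it has no minimal generators of degree $2,3,4,6$. If there are no generators of degree $2$ or $3$, Corollary~\ref{degree-at-most-two} tells us $G$ is triangle-free or $\omega(G)=3$ with a cone vertex. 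In that case Corollary~\ref{no-degree-six} and the proof of Corollary~\ref{degree-at-most-two} already exclude degrees $4$ and $6$. Hence the whole question reduces to deciding when Theorem~\ref{degree-two} produces no generator, under that restriction.

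By Theorem~\ref{degree-two}(i), having no degree-$2$ generator of the first type means every vertex has degree at most $2$. Since $G$ has no isolated vertices, $G$ is then a disjoint union of paths (with at least two vertices) and cycles. By Theorem~\ref{degree-two}(ii), ${\bf x}_{[n]}t^2$ must also fail to be a minimal generator. This forces either that $G$ has a cone vertex, or that $n=4$ and $G$ has two disjoint edges, or (implicitly) that $n\le 3$, since a cone-free graph needs $n\ge4$.

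Next I enumerate the graphs with maximum degree at most $2$ that satisfy this.
\begin{itemize}
\item A cone vertex has degree $n-1\le 2$, so $n\le 3$. This gives $K_2$, $P_3$ and $K_3$; each has $\omega\le 3$, and $K_3$ has a cone, so the constraint from degree $3$ holds.
\item If $n=4$ with no cone, the candidates are $2K_2$, $P_4$ and $C_4$, each of which has two disjoint edges. The remaining case $K_{1,3}$ is excluded because it has a vertex of degree $3$.
\item If $n\ge5$ and $G$ has no cone, Theorem~\ref{degree-two}(ii) gives the generator ${\bf x}_{[n]}t^2$, so no such graph qualifies.
\end{itemize}
All six surviving graphs are triangle-free or equal to $K_3$, so Corollary~\ref{degree-at-most-two} ensures there are no generators in degrees $3,4,6$. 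For the converse direction, I check each of the six graphs against the conditions above: no vertex has degree $\ge 3$, and (ii) fails for each.

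The main obstacle is the careful reading of condition (ii) in Theorem~\ref{degree-two}. When $n=4$ and the matching number is $2$, ${\bf x}_{[n]}\in I^2$, so it is not a generator. I need to confirm that the phrase ``matching number one whenever $n=4$'' captures exactly this case. I also need to confirm that for $n\le3$ the only graphs are those with a cone. Both of these I will verify by directly recomputing $u\in I^2$ for small $n$.
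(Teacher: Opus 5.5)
Your proposal is correct and follows essentially the paper's argument. Theorem~\ref{degree-two} forces maximum degree at most $2$ and either a cone vertex or $n=4$ with two disjoint edges, which leaves exactly the six listed graphs, and Corollary~\ref{degree-at-most-two} rules out generators in degrees $3,4,6$ for the converse; the paper simply splits cases by $n$ rather than by maximum degree. One small citation point: the implication from ``no degree-$3$ generators'' to ``triangle-free, or $\omega(G)=3$ with a cone'' is Corollary~\ref{no-degree-three}, and the forward direction does not actually need it.
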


\begin{proof}
We have $I_c(G)^{(k)}=I_c(G)^k$ for all $k\geq 1$ if and only if  $\mathcal{R}_{s}(I_{c}(G))=\mathcal{R}(I_{c}(G))$, that is 
$\mathcal{R}_{s}(I_{c}(G))$ is standard graded. By Corollary~\ref{degree-at-most-two} and Theorem~\ref{degree-two} we conclude that if $\mathcal{R}_{s}(I_{c}(G))$ is generated in degree one, then $n\leq 4$. Indeed if $n\geq 5$, then either $G$ has no cone, or $G$ has a vertex $i$ with $\deg_G(i)\geq 3$. In both cases, $\mathcal{R}_{s}(I_{c}(G))$ has a minimal generator of degree $2$, as Theorem~\ref{degree-two} shows. Hence, $n\leq 4$. Now, let $n=4$. If $G$ has a cone, then $\deg_G(i)\geq 3$ for some  vertex $i$, and so $\mathcal{R}_{s}(I_{c}(G))$ has a minimal generator of degree $2$. So $G$ has no cone and
by Theorem~\ref{degree-two}, $G$ contains two disjoint edges. The only graphs on $4$ vertices with this property are $2K_2,P_4$ and $C_4$. For $n=2,3$, since we assume that $G$ has no isolated vertices, the only possible graphs are $K_2$, $K_3$, and $P_3$. The converse statement also follows from Theorem~\ref{degree-two} and Corollary~\ref{degree-at-most-two}. 
\end{proof}


\section{Cycles}\label{cycles}
In this section, we restrict our attention to cycle graphs. We establish the symbolic Rees algebra of $I_c(G)$, compute the graded Betti numbers of
$I_c(G)^{(k)}$, and prove that all symbolic powers of $I_c(G)^{(k)}$ have linear quotients. We also determine the depth function $k\mapsto\depth S/I_c(G)^{(k)}$ and compute the Hilbert series of the symbolic fiber cone of $I_c(G)$.

Let $G=C_n$ be the cycle graph on $n$ vertices. That is, $G$ is the graph with the vertex set $V(G)=[n]$ and the edge set $E(G)=\{\{1,2\},\{2,3\},\dots,\{n-1,n\},\{n,1\}\}$. Let $S=K[x_1,\dots,x_n]$. We put $I=I_c(G)$ and ${\bf x}=x_1x_2\cdots x_n$.

\begin{thm}\label{main-cycle}
    With the notation introduced, the following statements hold.
    \begin{enumerate}
        \item[\textup{(a)}] We have
        $$
        \mathcal{R}_s(I)=\begin{cases}
        \mathcal{R}(I),&\text{if}\ n=3,4,\\
        S[\{ut\}_{u\in\mathcal{G}(I)},x_1\cdots x_nt^2],&\text{if}\ n\ge5.
        \end{cases}
        $$
        \item[\textup{(b)}] We have
        $$
        \widehat{\alpha}(I)=\begin{cases}
            1,&\text{if}\ n=3,\\
            2,&\text{if}\ n=4,\\
            \frac{n}{2},&\text{if}\ n\ge5.
        \end{cases}
        $$
        \item[\textup{(c)}] $I^{(k)}$ has linear quotients for all $k\ge1$. In particular, it is componentwise linear.\medskip
        \item[\textup{(d)}] For all $n\ge5$ and all $k\ge2$, putting $m=\lfloor\frac{k}{2}\rfloor$, we have\smallskip
        \begin{enumerate}
            \item[\textup{(i)}] If $k=2m$ is even, setting $d_p=2m(n-2)-p(n-4)$ for $0\le p\le m-1$, then
            $$
            \beta_{i,j}(I^{(k)})=\begin{cases}
                1,&\text{if}\ i=0,\ j=mn,\\
                n\big[\binom{1}{i}+(2m-2p-1)\binom{2}{i}\big],&\text{if}\ 0\le p\le m-1,\ j=d_p+i,\\
                0,&\text{otherwise}.
            \end{cases}
            $$
            \item[\textup{(ii)}] If $k=2m+1$ is odd, setting $d_p=(2m+1)(n-2)-p(n-4)$ for $0\le p\le m-1$, then
            $$
            \beta_{i,j}(I^{(k)})=\begin{cases}
                \binom{0}{i}+(n-2)\binom{1}{i}+\binom{2}{i},&\text{if}\ j=mn+n-2+i,\\
                n\big[\binom{1}{i}+(2m-2p)\binom{2}{i}\big],&\text{if}\ 0\le p\le m-1,\ j=d_p+i,\\
                0,&\text{otherwise}.
            \end{cases}
            $$
        \end{enumerate}
        \item[\textup{(e)}] For all $n\geq 4$, we have $\depth S/I^{(k)}=\lim_{k\rightarrow\infty}\depth S/I^{(k)}=n-3$, for all $k\ge1$. In particular $\ell_s(I)=3$.\smallskip
        \item[\textup{(f)}] For all $n\geq5$, the Hilbert series of $\mathcal{F}_{s}(I)$ is of the form
        $$
        F(\mathcal{F}_{s}(I),z)=\dfrac{1+(n-2)z+z^2}{(1-z)^{3}(1+z)}.
        $$
    \end{enumerate}
\end{thm}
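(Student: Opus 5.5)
The first step is to make the symbolic powers explicit. For $n\ge 4$ the cycle $C_n$ has no triangles, so by \eqref{sym-comp} a monomial ${\bf x}^{\bf a}$ lies in $I^{(k)}$ if and only if $a_i+a_j\ge k$ for every non-edge $\{i,j\}$ of $C_n$. For $n=3$ we have $I=(x_1,x_2,x_3)$, a prime, and for $n=4$ Corollary~\ref{equlaityPowes} gives $I^{(k)}=I^k$. These settle (a) for $n=3,4$.

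For $n\ge5$, part (a) follows from the classification of minimal generators. Since $C_n$ is triangle-free, Corollary~\ref{degree-at-most-two} shows that $\mathcal{R}_s(I)$ is generated in degrees $\le 2$. In Theorem~\ref{degree-two}, case (i) needs a vertex of degree $\ge3$, which $C_n$ lacks. Case (ii) applies, because $C_n$ has no cone and $n\ge5$. Hence the only degree-two generator is ${\bf x}t^2$, and
$$I^{(k)}=\sum_{p=0}^{\lfloor k/2\rfloor}{\bf x}^pI^{k-2p}.$$
I would then write down $\mathcal{G}(I^{(k)})$ explicitly. A generator of $I^{q}$ has the form ${\bf x}^q/\prod_{s}(x_{i_s}x_{i_s+1})$. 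Those not already in ${\bf x}I^{q-2}$ should be exactly the monomials ${\bf x}^q/(x_i^{a}x_{i+1}^{b})$ with $a+b=q$, one for each pair of consecutive vertices, identifying $x_i^q$ coming from the two edges at $i$. This leaves $nq$ generators in degree $k(n-2)-p(n-4)=d_p$, where $q=k-2p$, which matches $\beta_{0,d_p}$ in (d). The top level $p=m$ gives ${\bf x}^m$ when $k$ is even and ${\bf x}^mI$ when $k$ is odd; I will check the bookkeeping there against the formula. For (b), taking minimal degrees gives $\alpha(I^{(k)})=\min_p d_p$, attained at $p=\lfloor k/2\rfloor$. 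This yields $kn/2$ up to a bounded error, so $\widehat\alpha(I)=n/2$. For $n=4$ we get $\alpha(I^k)=2k$, and for $n=3$ we get $\alpha=k$.

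For (c) I would order $\mathcal{G}(I^{(k)})$ by decreasing $p$, which is increasing degree. Within a level I would order the $nq$ monomials ${\bf x}^q/(x_i^ax_{i+1}^b)$ by going around the cycle and, for fixed $i$, by increasing $a$. The aim is to show that each colon $(u_1,\dots,u_{r-1}):u_r$ is generated by at most two variables. Typically these are the variable $x_i$ or $x_{i+1}$ that exchanges exponents with the previous generator of the same edge, together with one variable coming from the level $p+1$ generator ${\bf x}^{p+1}\cdots$ that divides $u_r$ times a variable. Counting the sizes of these sets reproduces the binomial patterns $\binom{1}{i}+c\binom{2}{i}$ in (d), since with linear quotients $\beta_{i,d+i}$ equals the sum of $\binom{|\set(u)|}{i}$ over generators $u$ of degree $d$.

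For (e), all sets have size at most $2$ and some have size exactly $2$, so $\pd I^{(k)}=2$ and $\depth S/I^{(k)}=n-3$ for every $k$. Theorem~\ref{limit} then gives $\ell_s(I)=3$. The case $n=4$, where $I^{(k)}=I^k$, is handled by the same ordering or by the known results on ordinary powers. For (f) I would use $\dim_K \mathcal{F}_s(I)_k=\mu(I^{(k)})$ from the generator count above, sum the resulting quasi-polynomial (quadratic in $m$, split by parity of $k$) as a power series in $z$, and check that the sum equals $(1+(n-2)z+z^2)/((1-z)^3(1+z))$.

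I expect the main obstacle to be (c): choosing an ordering that gives linear quotients across different $p$-levels, and computing each $\set(u)$ precisely enough to recover the Betti numbers in (d). I would first verify the colon computations by hand for $k=2,3$ and then induct on $k$ using $I^{(k)}={\bf x}I^{(k-2)}+I^k$.
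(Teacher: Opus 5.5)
Your overall route is the paper's. Corollary~\ref{degree-at-most-two} and Theorem~\ref{degree-two} give (a) and $I^{(k)}=\sum_p{\bf x}^pI^{k-2p}$. From there you write down an explicit $\mathcal G(I^{(k)})$, choose a degree-increasing linear-quotient order, and read (d)--(f) off the set sizes and $\mu(I^{(k)})$. The part that carries (c)--(d) needs repair, however.

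First, your description of the generators is wrong. ${\bf x}^q/(x_i^ax_{i+1}^b)$ with $a+b=q$ has degree $(n-1)q$, not $(n-2)q$. The correct family is indexed by pairs of adjacent \emph{edges}. With $u_j={\bf x}/(x_jx_{j+1})$ it is
${\bf x}^pu_j^au_{j+1}^b=x_j^{p+b}x_{j+1}^{p}x_{j+2}^{p+a}\prod_{i\ne j,j+1,j+2}x_i^{k-p}$, with $a\ge1$, $b\ge0$, $a+b=q$. Your count $nq$ and degree $d_p$ are right for this family.

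Also, ``not already in ${\bf x}I^{q-2}$'' is not yet a proof; two things are needed:
\begin{itemize}
\item ${\bf x}$ \emph{strictly} divides $u_ju_{j'}$ for disjoint edges when $n\ge5$, so a minimal generator uses at most two adjacent edges.
\item Each listed $u$ satisfies $u/x_i\notin I^{(k)}$. This follows from your non-edge criterion, because the non-edges $\{j,j+2\}$ and $\{j+1,i\}$, for $i\notin\{j,j+1,j+2\}$, have exponent sum exactly $k$.
\end{itemize}

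Second, your within-level order is underspecified exactly where it matters: the placement of the pure powers ${\bf x}^pu_j^q$ with $q\ge2$. Such a generator has exponent $p$ at $j$ and $j+1$, while every higher-level generator has all exponents $\ge p+1$. So higher levels contribute only multiples of $x_jx_{j+1}$. The variables $x_j$ and $x_{j+1}$ can come only from the level-$p$ neighbours ${\bf x}^pu_j^{q-1}u_{j+1}$ and ${\bf x}^pu_{j-1}u_j^{q-1}$. In particular, your heuristic that one variable comes from level $p+1$ is false for these generators.

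This has consequences for how you realise your identification:
\begin{itemize}
\item If ${\bf x}^pu_{i+1}^q$ is listed as the $a=0$ member at the start of block $i$ (say $i=1$), both neighbours come after it. Its colon then lies in $(x_{i+1}x_{i+2})$ and linear quotients fail.
\item If $a$ runs over $1,\dots,q$, so that $u_i^q$ closes block $i$, your order does work. For $q\ge2$ the sets are:
  \begin{itemize}
  \item $\{x_{i+1}\}$ for $a=1$, $i<n$;
  \item $\{x_n,x_1\}$ for $(i,a)=(n,1)$;
  \item $\{x_i,x_{i+1}\}$ for $2\le a\le q-1$, and for $a=q$ with $i\ge2$;
  \item $\{x_1\}$ for $(i,a)=(1,q)$.
  \end{itemize}
  For $q=1$ they are $\emptyset$, $\{x_{i+1}\}$ and $\{x_1,x_n\}$. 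In every case this gives $n$ sets of size one and $n(q-1)$ of size two, as (d) requires.
\end{itemize}

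The paper instead lists each block by decreasing $a$ and puts all $n$ pure powers at the end of the level. This gives the colons $(x_{j+1})$, $(x_{j+1},x_{j+2})$ and $(x_j,x_{j+1})$. In both versions the colons are read off directly from the exponent vector above, so no induction on $k$ is needed.

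Finally, for $n=4$ ``the same ordering'' does not transfer, since ${\bf x}=u_1u_3\in I^2$. There, use $I=(x_1,x_3)(x_2,x_4)$, a product of ideals in disjoint variables, to get $\pd S/I^k=3$ directly.
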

\begin{proof}
    (a) It follows from Corollary~\ref{degree-at-most-two} that 
    $\mathcal{R}_s(I)$ is generated in degree at most $2$. Moreover, it follows from Theorem~\ref{degree-two} that the only possible generator of degree $2$ is $x_1\cdots x_nt^2$ which is minimal precisely when $n\geq 5$.
    \smallskip

    (b) If $n\in\{3,4\}$, then all the ordinary and symbolic powers of $I$ coincide. In those cases, $\widehat{\alpha}(I)=\alpha(I)=n-2$. Now, let $n\ge5$. By part (a), for all $k\ge2$ we have
    \begin{equation}\label{eq:decompI_c(C_n)}
        I^{(k)}=\sum_{p=0}^{\lfloor\frac{k}{2}\rfloor}{\bf x}^pI^{k-2p}.
    \end{equation}
    Hence $\alpha(I^{(k)})=n\lfloor\frac{k}{2}\rfloor+(n-2)(k-2\lfloor\frac{k}{2}\rfloor)$, for all $k\ge2$. So, $\widehat{\alpha}(I)=\lim_{k\rightarrow\infty}\alpha(I^{(k)})/k=n/2$.

 \smallskip
 
    (c) By \cite[Corollary 3.2]{fmStanley-Reisner2025} (see also \cite[Theorem 2.2]{hqm2026} or \cite[Corollary 4.8]{fm2025}) $I^{(1)}=I$ has linear quotients. For $n\in\{3,4\}$, $I$ is polymatroidal and $I^{(k)}=I^k$ for all $k\ge1$, so that $I^k$ has linear quotients for all $k\ge1$. Now, let $n\ge5$ and $k\ge2$. When a positive integer $j$ exceeds $n$ we take its remainder modulo $n$, and we maintain this convention throughout the proof. Put $u_j={\bf x}/(x_jx_{j+1})$ for $j\in[n]$, with our convention we have $x_{n+1}=x_1$ and $u_{n+1}=u_1$. We claim that
    \begin{equation}\label{eq:G(I^(k))-C_n}
        \mathcal{G}(I^{(k)})=\{{\bf x}^pu_j^a u_{j+1}^{b}:\ a\ge1,\ p,b\ge0,\ 2p+a+b=k,\ j\in[n]\}\cup\begin{cases}
            \{{\bf x}^{k/2}\},&\textup{if}\ k\ \textup{is even},\\
            \phantom{,}\emptyset,&\textup{otherwise}.
        \end{cases}
    \end{equation}
    
    To this end, first note that $I$ is equigenerated and $\mathcal{G}(I)=\{u_1,\dots,u_n\}$. By formula (\ref{eq:decompI_c(C_n)}), if $u\in\mathcal{G}(I^k)$, then $u={\bf x}^pu_{j_1}\cdots u_{j_{k-2p}}$ for some $p$ and some indices $j_1,\dots,j_{k-2p}\in[n]$. If $k-2p=0$, then $k$ is even, $p=k/2$ and $u={\bf x}^{k/2}$. Suppose that $k-2p>0$. We claim that $\{j_1,\dots,j_{k-2p}\}\subseteq\{j,j+1\}$ for some $j\in[n]$. Suppose that this was not the case, say that $j_{k-2p}$ and $j_{k-2p-1}$ are distinct non-consecutive integers. Then ${\bf x}$ strictly divides $u_{j_{k-2p-1}}u_{j_{k-2p}}$. Hence
    $$
    {\bf x}^{p+1}u_{j_1}\cdots u_{j_{k-2(p+1)}}\in {\bf x}^{p+1}I^{k-2(p+1)}\subset I^{(k)}\ \ \textup{strictly divides}\ \ u,
    $$
    against the fact that $u$ is a minimal generator of $I^{(k)}$. This proves our claim and shows that the right-hand set in (\ref{eq:G(I^(k))-C_n}) is contained in the left-hand set in (\ref{eq:G(I^(k))-C_n}). To conclude that (\ref{eq:G(I^(k))-C_n}) is indeed an equality, we must show that each monomial appearing in the right-hand side of (\ref{eq:G(I^(k))-C_n}) is a minimal generator of $I^{(k)}$. It is clear that if $k$ is even, then ${\bf x}^{k/2}$ is a minimal generator of $I^{(k)}$ because its degree is the initial degree of $I^{(k)}$.
    
    Now, let $u={\bf x}^pu_j^au_{j+1}^b$ with $a\ge1$, $p,b\ge0$, $2p+a+b=k$ and $1\le j\le n$. By the first part of the argument, $u\in I^{(k)}$. To prove that $u$ is a minimal generator of $I^{(k)}$ it is enough to show that $u/x_i\notin I^{(k)}$ for all $i\in[n]$. To this end, up to relabeling, we may assume that $j=1$. Hence $u={\bf x}^{k-p}/(x_1^ax_2^{a+b}x_3^b)=x_1^{c_1}\cdots x_n^{c_n}$ with $c_1=k-p-a$, $c_2=k-p-(a+b)=p$, $c_3=k-p-b$ and $c_j=k-p$ for all $4\le j\le n$. Now we show that for all $x_i$ dividing $u$, we have that $u/x_i$ does not belong to $I^{(k)}$ by distinguishing all possible cases.

    $\mathbf{(i=1)}$ Then $u/x_1=x_1^{c_1-1}x_2^{c_2}\cdots x_n^{c_n}\notin I^{(k)}$ because $\{1,3\}\notin E(G)$ but $(c_1-1)+c_3=k-1$.

    $\mathbf{(i=2)}$ Then $u/x_2=x_1^{c_1}x_2^{c_21-1}x_3^{c_3}\cdots x_n^{c_n}\notin I^{(k)}$ because $\{2,4\}\notin E(G)$ but $(c_2-1)+c_4=k-1$.

    $\mathbf{(i=3)}$ This case is analogous to the case $i=1$, because $c_1+(c_3-1)=k-1$.

    $\mathbf{(4\le i\le n)}$ This case can be handled as the case $i=2$, because for any $4\le j\le n$ we have $a_j=a_4$ and so $c_2+(c_j-1)=c_2+(c_4-1)=k-1$.

    Now, fix $k\ge2$. Having established equation (\ref{eq:G(I^(k))-C_n}), we claim that $I^{(k)}$ has linear quotients with respect to the following order. We put $w_0={\bf x}^{k/2}$ is $k$ is even, and $w_{p,a,b,j}={\bf x}^pu_j^au_{j+1}^b$ with $a\ge1$, $p,b\ge0$, $2p+a+b=k$ and $1\le j\le n$. Then, if $k$ is even we declare $w_0>w_{p,a,b,j}$ for all admissible 4-uples $(p,a,b,j)$ satisfying the above side conditions. For arbitrary $k\ge2$, we declare $w_{q,c,d,h}>w_{p,a,b,j}$ if $q>p$ or else if $q=p$ then we consider the following order
    \begin{equation}\label{matC_n}
        \begin{matrix}
        {\bf x}^pu_1^{k-2p-1}u_2&>&{\bf x}^pu_1^{k-2p-2}u_2^2&>&\dots&>&{\bf x}^pu_1u_2^{k-2p-1}&>&\\
        {\bf x}^pu_2^{k-2p-1}u_3&>&{\bf x}^pu_2^{k-2p-2}u_3^2&>&\dots&>&{\bf x}^pu_2u_3^{k-2p-1}&>&\\
        &&&>&\dots&>&&&\\
        {\bf x}^pu_{n-1}^{k-2p-1}u_n&>&{\bf x}^pu_{n-1}^{k-2p-2}u_n^2&>&\dots&>&{\bf x}^pu_{n-1}u_n^{k-2p-1}&>&\\
        {\bf x}^pu_n^{k-2p-1}u_1&>&{\bf x}^pu_n^{k-2p-2}u_1^2&>&\dots&>&{\bf x}^pu_nu_1^{k-2p-1}&>&\\
        {\bf x}^pu_1^{k-2p}&>&{\bf x}^pu_2^{k-2p}&>&\dots&>&{\bf x}^pu_n^{k-2p}.
    \end{matrix}
    \end{equation}

    For a generator $w\in\mathcal{G}(I^{(k)})$ let $J_w$ be the monomial ideal generated by the monomials $v\in\mathcal{G}(I^{(k)}$ with $v>w$. We must prove that $J_w:w$ is generated by variables. If $k$ is even and $w=w_0={\bf x}^{k/2}$ this is clear. So we may assume that $w=w_{p,a,b,j}$ for some $4$-uple $(p,a,b,j)$ satisfying the side conditions in (\ref{eq:G(I^(k))-C_n}). Notice that $k-2p=a+b\ge1$. We claim that
    \begin{equation}\label{eq:colonsC_n}
        J_w:w=\begin{cases}
        (x_{j+1}),&\textup{if}\ b=1,\\
        (x_{j+1},x_{j+2}),&\textup{if}\ b\ge2,\\
        (x_{j},x_{j+1}),&\textup{if}\ b=0,\ k-2p\ge2,\\
        (x_{j+1}),&\textup{if}\ b=0,\ k-2p=1,\ j<n,\\
        (x_1,x_n),&\textup{if}\ b=0,\ k-2p=1,\ j=n.
    \end{cases}
    \end{equation}

    To prove the claim, we first notice that $w_{p,a,b,j}=x_j^{p+b}x_{j+1}^px_{j+2}^{p+a}(\prod_{i\ne j,j+1,j+2}x_i^{k-p})$. Now we distinguish the possible cases.\smallskip

    $\mathbf{(b=1)}$ Since $a\ge1$, $b=1$ and $k-2(p+1)=a+b-2$, the $4$-tuple $(p+1,a-1,b-1,j)$ satisfies the side conditions in (\ref{eq:G(I^(k))-C_n}). Hence $w_{p+1,a-1,b-1,j}\in\mathcal{G}(I^{(k)})$ and $w_{p+1,a-1,b-1,j}:w_{p,a,b,j}=x_{j+1}$. This shows that $(x_{j+1})\subseteq J_w:w$. Conversely, it is enough to show that for any $w'=w_{q,c,d,h}>w_{p,a,b,j}=w$ the variable $x_{j+1}$ divides $w':w$. If $q>p$, since the exponent of $x_{j+1}$ in $w'$ is at least $q$, while the exponent of $x_{j+1}$ in $w$ is precisely $p$, we see that $x_{j+1}$ divides $w':w$. Otherwise if $q=p$, since $b=1$ from (\ref{matC_n}) we see that $h<j$ and that the exponent of $x_{j+1}$ in $w'$ is either $k-p>p$ (if $h\ge j-2$) or $p+c>p$ (if $h=j-1$). This then shows that $x_{j+1}$ divides $w':w$. Finally, if $k$ is even, then it is immediately seen that $x_{j+1}$ divides ${\bf x}^{k/2}:w$.\smallskip

    $\mathbf{(b\ge2)}$ We have $w_{p+1,a-1,b-1,j}:w_{p,a,b,j}=x_{j+1}$ and $w_{p,a+1,b-1,j}:w_{p,a,b,j}=x_{j+2}$. Hence $(x_{j+1},x_{j+2})\subseteq J_w:w$. Conversely, it is enough to show that for any $w'=w_{q,c,d,h}>w_{p,a,b,j}=w$ then $x_{j+1}$ or $x_{j+2}$ divide $w':w$. If $q>p$ this is clear as argued in the previous case. Suppose that $q=p$. If $h<j$, then $x_{j+1}$ divides $w':w$ as argued in the previous case. If $h=j$, then $w'>w$ implies that $c>a$. Since $k-2p=a+b=c+d$, this then implies that $b>d$ and a quick computation then shows that $x_{j+2}$ divides $w':w$. Finally, if $k$ is even, then either $x_{j+1}$ or $x_{j+2}$ divides ${\bf x}^{k/2}:w$.\smallskip

    $\mathbf{(b=0,\ k-2p\ge2)}$ We have $w_{p,k-2p-1,1,j}:w=x_j$ and ${\bf x}^pu_{j-1}u_j^{k-2p-1}:w=x_{j+1}$. Hence $(x_j,x_{j+1})\subseteq J_w:w$. Conversely, let $w'=w_{q,c,d,h}>w$. If $q>p$, then both $x_j$ and $x_{j+1}$ divide $w':w$. If $q=p$, then either $d\ge1$ or $d=0$ and $h<j$. In both cases, one can easily see that either $x_j$ or $x_{j+1}$ divide $w':w$. Finally, if $k$ is even, then either $x_{j}$ or $x_{j+1}$ divides ${\bf x}^{k/2}:w$.\smallskip

    $\mathbf{(b=0,\ k-2p=1,\ j<n)}$ Then $k=2p+1$ is odd, and a direct computation shows that
    $$
    J_w:w=({\bf x}^pu_1,\dots,{\bf x}^pu_{j-1}):({\bf x}^pu_j)=(x_{j+1}),
    $$
    as desired.\smallskip

    $\mathbf{(b=0,\ k-2p=1,\ j=n)}$ Then $k=2p+1$ is odd, and a direct computation shows that
    $$
    J_w:w=({\bf x}^pu_1,\dots,{\bf x}^pu_{n-1}):({\bf x}^pu_n)=(x_{1},x_{n}),
    $$
    as claimed.\medskip

    (d) The given formulas in (i)-(ii) follow immediately from  \cite[Corollary 8.2.2, Exercise 8.8]{HHBook} and the computations in (\ref{eq:colonsC_n}).\smallskip

    (e) From \cite[Corollary 1.3]{hqm2026} or \cite[Corollary 2.4]{fm2025} and the fact that $C_{n}$ is a triangle free, we have $I^{\vee}=I(C_{n}^{c})$, where $I^{\vee}$ is the Alexander dual ideal of $I$. Then, it is known that $\reg I(C_{n}^{c})=3$, and thus, from \cite[Corollary 0.3]{t1999}, we get $\pd S/I=\reg I^{\vee}=3$. Hence, we obtain $\depth S/I=n-3$. For $k\geq 2$, the statement follows from (d).\smallskip
    
    (f) Note that $F(\mathcal{F}_{s}(I),z)=1+nz+\sum_{k\geq2}\mu(I^{(k)})z^{k}$ since $\mu(I)=n$. From (d) we have that
    $$
    \beta_{0}(I^{(k)})=\mu(I^{(k)})=\begin{cases}
        2n\binom{m+1}{2}+1=nm(m+1)+1,&\text{if}\ k=2m\ \text{is even},\\[7pt]
        n\big[\binom{m+1}{2}+\binom{m+2}{2}\big]=n(m+1)^{2},&\text{if}\ k=2m+1\ \text{is odd}.
    \end{cases}
    $$
    Hence,
    $$
    F(\mathcal{F}_{s}(I),z)=1+nz+\displaystyle\sum_{m\geq1}(nm(m+1)+1)z^{2m}+\displaystyle\sum_{m\geq1}n(m+1)^{2}z^{2m+1}.
    $$
    Using the standard identities $\sum_{i\geq1}\binom{i+1}{2}T^{i}=\frac{T}{(1-T)^{3}}$ and $\sum_{i\geq1}(i+1)^{2}T^{i}=\frac{1+T}{(1-T)^{3}}-1$, we see that 
    \begin{align*}
    F(\mathcal{F}_{s}(I),z)&=1+nz+\dfrac{2nz^{2}}{(1-z^{2})^{3}}+\dfrac{z^{2}}{1-z^{2}}+nz\left(\dfrac{1+z^{2}}{(1-z^{2})^{3}}-1\right) \\
    &=1+\dfrac{z^{2}}{1-z^{2}}+\dfrac{2nz^{2}}{(1-z^{2})^{3}}+\dfrac{nz(1+z^{2})}{(1-z^{2})^{3}} \\
    &=\dfrac{1}{1-z^{2}}+\dfrac{n(2z^{2}+z(1+z^{2}))}{(1-z^{2})^{3}} \\
    &=\dfrac{1+(n-2)z+z^2}{(1-z)^{3}(1+z)},
    \end{align*}
    which completes the proof. 
\end{proof}


\section{Complete multipartite graphs}\label{CompleteBipartiteG}

In this section we focus on complete multipartite graphs. We describe the symbolic Rees algebra $\mathcal{R}_s(I)$ of $I=I_c(G)$, then give a structural description of minimal $k$-covers (Lemma \ref{k-cover}), yielding an upper bound on $\mu(I^{(k)})$. We then determine $\lim_{k\to\infty}\depth S/I_c(G)^{(k)}$ together with a uniform lower bound for all $k\geq1$, and show that $I^{(k)}$ has linear quotients for every $k$ (Theorem \ref{LQ of complete multi}). This gives a complete depth formula when every part has size at least two (Proposition \ref{depth of complete multi}), which in particular shows that the depth function $k\mapsto\depth S/I_c(G)^{(k)}$ need not be non-increasing. Finally, we compute the Waldschmidt constant of $I$ in Proposition \ref{Waldschmit-constant}.

Throughout this section $G=K_{d_1,\dots,d_m}$ is a \textit{complete multipartite graph} with the vertex set $V(G)=\{x_{i,j}:i\in[m],j\in[d_i]\}$ and the edge set
$$
E(G)=\{\{x_{i,j},x_{p,q}\}:\ i\ne p,\ j\in[d_i],\ q\in[d_p]\}.
$$
Also $S=K[x_{i,j}:i\in[m],j\in[d_i]]$ and  $I=I_c(G)$. We put ${\bf x}_i=\prod_{j\in[d_i]}x_{i,j}$ for all $i\in[m]$ and ${\bf x}=\prod_{i\in[m]}{\bf x}_i$.
\begin{thm}\label{multipartitre-Rees}
    With the notation introduced, we have
        $$
        \mathcal{R}_s(I)=S[It,{\bf x}t^2,\left\{
\left(\frac{{\bf x}\cdot{\bf x}_i}{x_{i,j}^2}\right)t^2
\right\}_{\substack{i\in[m],j\in[d_i]}},\left\{
\frac{{\bf x}^2}{x_{1,j_1}\cdots x_{m,j_m}}t^3
\right\}_{\substack{j_s\in[d_s]
}}, \{u_{i_1,\ldots,i_r,j_1,\ldots,j_r}t^6\}_{r\geq 4}],
        $$
        where $u_{i_1,\ldots,i_r,j_1,\ldots,j_r}=0$, if $m\leq 4$ or $d_i\leq 2$ for all $i$, and otherwise $$u_{i_1,\ldots,i_r,j_1,\ldots,j_r}=\prod_{\ell=1}^r({\bf x}_{i_\ell}^4/x_{i_\ell,j_\ell}^2).\prod_{i\in[m]\setminus \{i_1,\ldots,i_r\}}{\bf x}_i^3,$$ for any pairwise distinct integers $i_1,\ldots,i_r\in [m]$ and $j_\ell\in [d_{i_\ell}]$ for all $1\leq\ell\leq r$.  
\end{thm}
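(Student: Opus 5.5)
The plan is to specialize Theorems~\ref{degree-six}, \ref{degree-four}, \ref{degree-three} and \ref{degree-two} to $G=K_{d_1,\dots,d_m}$, using Theorem~\ref{symbolic Rees}, which says that minimal generators of $\mathcal{R}_s(I)$ occur only in degrees $1,2,3,4,6$. In each degree we read off the graph-theoretic data for this specific $G$. The relevant data are as follows.
\begin{itemize}
\item Neighbourhoods: $N_G(x_{i,j})$ is the set of all vertices outside part $i$.
\item Cliques: a clique picks at most one vertex from each part, so the maximal cliques are exactly the transversals $\{x_{1,j_1},\dots,x_{m,j_m}\}$.
\item Cone vertices: a cone vertex is a part of size one.
\item Complement: $G^c$ is the disjoint union of the complete graphs $K_{d_i}$.
\end{itemize}

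In degree $2$, Theorem~\ref{degree-two}(i) with $i=x_{i,j}$ gives $A=V\setminus(\text{part } i)$ and $B=(\text{part } i)\setminus\{x_{i,j}\}$. Hence $u=\mathbf{x}_A\mathbf{x}_B^2=\mathbf{x}\cdot\mathbf{x}_i/x_{i,j}^2$, and it is a minimal generator whenever $\deg(x_{i,j})=n-d_i\ge 3$. Theorem~\ref{degree-two}(ii) gives $\mathbf{x}t^2$ when there is no cone vertex. The small exceptional cases (degree $\le 2$, cones, $n=4$ with matching number $\ge 2$) produce elements that are not minimal generators. Including them in the displayed generating set is harmless, because they still lie in $I^{(2)}t^2$. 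So the theorem should be read as an equality of algebras, with generators listed possibly non-minimally, and only the containment of each listed element in the correct symbolic power needs checking. In degree $3$, Theorem~\ref{degree-three} gives $u=\mathbf{x}_A\mathbf{x}_B^2$ with $A$ a transversal, that is, $u=\mathbf{x}^2/(x_{1,j_1}\cdots x_{m,j_m})$ as listed. Again the non-minimal ones ($m\le 2$, or $m=3$ with a cone) are harmless, since they lie in $I^{(3)}$ by~\eqref{sym-comp}.

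In degree $4$, Theorem~\ref{degree-four} needs an edge $\{r,s\}$ with $B=N(r)\cap N(s)$ not a clique. For $r$ in part $i$ and $s$ in part $p$, $B$ is the union of the parts other than $i,p$. This fails to be a clique exactly when some other part has size $\ge 2$. But then $\{r,s\}$ is a dominating edge, since every vertex lies outside part $i$ or outside part $p$. So there are no degree-$4$ minimal generators, which matches the absence of $t^4$ terms in the statement.

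In degree $6$, Theorem~\ref{degree-six} takes a clique $A=\{x_{i_1,j_1},\dots,x_{i_r,j_r}\}$ with $r\ge 3$. The set $B$ of common neighbours consists of all vertices outside parts $i_1,\dots,i_r$, and $(G_B)^c$ is the union of the $K_{d_i}$ with $i\notin\{i_1,\dots,i_r\}$. The requirement that it be non-bipartite without isolated vertices means every remaining part has size $\ge 2$ and some remaining part has size $\ge 3$; in particular $r<m$. The set $C$ consists of the other vertices in the parts $i_\ell$, so $\mathbf{x}_A^2\mathbf{x}_B^3\mathbf{x}_C^4=\prod_\ell(\mathbf{x}_{i_\ell}^4/x_{i_\ell,j_\ell}^2)\prod_{i\notin\{i_\ell\}}\mathbf{x}_i^3$, matching $u_{i_1,\dots,i_r,j_1,\dots,j_r}$.

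The main obstacle is the $r=3$ split-dominating condition. For a triangle $\{a,b,c\}$ in parts $i_1,i_2,i_3$, $N(a)\cup(N(b)\cap N(c))$ misses exactly the vertices of part $i_1$ other than $a$. Hence the triangle is split-dominating iff some $d_{i_\ell}=1$. This condition is what decides whether $r=3$ actually contributes a degree-$6$ generator, and the vanishing conditions stated for $u$ ($m\le4$, or all $d_i\le 2$) have to be reconciled with it. I would verify this carefully and accept the indexing $r\ge4$ in the statement as the paper's normalization. Any listed element that is not minimal still lies in $I^{(6)}$, so only membership needs checking there. Combining the degrees with Theorem~\ref{symbolic Rees} gives the stated presentation.
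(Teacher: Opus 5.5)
Your route is the paper's: cap the degree by Theorem~\ref{symbolic Rees}, then specialize Theorems~\ref{degree-two}, \ref{degree-three}, \ref{degree-four} and \ref{degree-six} to $K_{d_1,\dots,d_m}$. Your treatment of degrees $2$, $3$ and $4$ is fine, as is your identification of $A$, $B$, $C$ in degree $6$ and your remark that non-minimal listed elements are harmless.

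The gap is the case $|A|=3$ in degree $6$. You flag it but do not settle it, and your computation there is wrong. Let $a,b,c$ lie in parts $i_1,i_2,i_3$. A vertex of part $i_1$ other than $a$ lies in a part different from those of $b$ and $c$, so it is adjacent to both and belongs to $N_G(b)\cap N_G(c)$; so does $a$ itself. Since $N_G(a)$ is everything outside part $i_1$, we get $N_G(a)\cup(N_G(b)\cap N_G(c))=V(G)$. Hence \emph{every} triangle of a complete multipartite graph is split-dominating, whatever the part sizes. This is exactly the observation the paper uses.

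This is not a matter of ``normalization''. Under your criterion (split-dominating iff some $d_{i_\ell}=1$), a triangle meeting three parts of size at least $2$ would satisfy condition (ii) of Theorem~\ref{degree-six}. Take $K_{2,2,2,3}$ with $B$ the part of size $3$, so $(G_B)^c=K_3$ is non-bipartite without isolated vertices. You would then obtain a minimal generator ${\bf x}_A^2{\bf x}_B^3{\bf x}_C^4t^6$ with $|A|=3$ and $m=4$.

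That element does not appear in the displayed list. A minimal monomial generator of $\mathcal{R}_s(I)$ cannot be omitted from any monomial generating set, so your computation would make the statement false. You cannot simply ``accept the indexing $r\ge 4$''.

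With the correct fact, condition (ii) never holds, so $r=|A|\ge 4$. Then $B\neq\emptyset$ forces $m\ge r+1\ge 5$. Non-bipartiteness of $(G_B)^c$ forces some remaining part to have $d_i\ge 3$. These are precisely the indexing $r\ge 4$ and the vanishing conditions in the statement. In $K_{2,2,2,3}$ the candidate with $|A|=3$ indeed factors through $I^{(2)}I^{(4)}$, via the decomposition in the proof of Theorem~\ref{degree-six}.
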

\begin{proof}
    
    It is easily seen that all the monomials presented in the statement belong to $\mathcal{R}_s(I)$. Conversely, we will prove that any minimal generator $ut^k$ of $\mathcal{R}_s(I)$ has one of the presented forms. By Theorem~\ref{symbolic Rees} we have $k\leq 6$.
For $k=1$, there is nothing to prove because $I^{(1)}=I$. Let $k=2$. By Theorem~\ref{degree-two}, either $u={\bf x}$ or $u={\bf x}_A{\bf x}_B^2$, where $A=N_G(x_{i,j})={\bf x}/{\bf x}_i$ for some vertex $x_{i,j}$  and $B=V(G)\setminus N_G[x_{i,j}]={\bf x}_i/x_{i,j}$. Hence, $u=({\bf x}/{\bf x}_i)({\bf x}_i/x_{i,j})^2={\bf x}{\bf x}_i/x_{i,j}^2$, as desired. 
Now, let $k=3$. Since any maximal clique of $G$ has the vertex set of the form $x_{1,j_1},\ldots,x_{m,j_m}$,
the result follows from Theorem~\ref{degree-three}.

By Theorem~\ref{symbolic Rees} it remains to consider $k\in\{4,6\}$. Since any edge of $G$ is a dominating edge, by Theorem~\ref{degree-four} we conclude that $\mathcal{R}_s(I)$ has no minimal generator of degree $4$. So $k\neq 4$. Finally, assume that $k=6$. Notice that any triangle in $G$ is a split-dominating triangle. Thus by Theorem~\ref{degree-six}, $u={\bf x}_A^2{\bf x}_B^3{\bf x}_C^4$, where $G_{A}=K_r$ for some $r\geq 4$, $B=\{j\in[n]\setminus A:\, \{i,j\}\in E(G) \text{ for any } i\in A\}\neq\emptyset$,  $C=[n]\setminus (A\cup B)$ and $(G_{B})^c$ is not a bipartite graph. The description of $A$ implies that ${\bf x}_A=x_{i_1,j_1}\cdots x_{i_r,j_r}$ for pairwise distinct integers $i_1,\ldots,i_r\in[m]$ and $j_\ell\in [d_{i_\ell}]$. Also the description  of $B$ implies that ${\bf x}_B=\prod_{i\in[m]\setminus \{i_1,\ldots,i_r\}}{\bf x}_i$. Thus ${\bf x}_C=\prod_{\ell=1}^r({\bf x}_{i_\ell}/x_{i_\ell,j_\ell})$. Since  $B\neq\emptyset$ we have $m\geq 5$ and since $(G_{B})^c$ is not bipartite, we have $d_i\geq 3$ for some $i\in [m]\setminus \{i_1,\ldots,i_r\}$.  This shows that $u=u_{i_1,\ldots,i_r,j_1,\ldots,j_r}$, as desired.
\end{proof}

A {\em minimal $k$-cover} of $I$ is a vector $\textbf{a}=(a_1,\ldots,a_n)$ such that ${\bf x}^{\textbf{a}}$ is a minimal generator of $I^{(k)}$. 
It is easily seen that $\textbf{a}$  is a minimal $k$-cover of $I_c(G)$ if and only if
 
\begin{enumerate}
    \item[(\textup{i})]  
    $\begin{cases}
        a_{i,h}+a_{i,\ell}\ge k,&\textup{for all}\ i\in[m]\ \textup{and all}\ 1\le h<\ell\le d_i,\\
        a_{i_1,j_1}+a_{i_2,j_2}+a_{i_3,j_3}\ge k,&\textup{for all}\ 1\le i_1<i_2<i_3\le m\ \textup{and all}\ j_s\in[d_{i_s}].
    \end{cases}$
    \smallskip
    
    \item[(\textup{ii})]    For any $i$ and $j\in [d_i]$ with $a_{i,j}>0$, either there exists  some $\ell\in [d_i]$ with $\ell \neq j$ such that $a_{i,j}+a_{i,\ell}=k$ or there exist $i_2<i_3$ distinct from $i$ such that $a_{i,j}+a_{i_2,j_2}+a_{i_3,j_3}=k$,  for some $j_1\in[d_{i_1}]$ and $j_2\in[d_{i_2}]$.
    \end{enumerate}

\begin{lemma}\label{k-cover}
Let $G=K_{d_1,\dots,d_m}$ be a complete multipartite graph on $n\geq 3$ vertices, let $I=I_c(G)$, and let ${\bf a}$ be a minimal $k$-cover of $I$. Fix $i\in[m]$. Then the following statements hold.
\begin{enumerate}
   \item[\textup{(a)}] If $a_{i,j}\leq k/2$ for some $j$, then $a_{i,\ell}=k-a_{i,j}$ for any $\ell\in[d_i]\setminus j$.

   \item[\textup{(b)}] If $a_{i,j}>k/2$ for all $j$, then $a_{i,1}=a_{i,2}=\cdots=a_{i,d_i}$.
\end{enumerate}
Consequently, $$|\{(a_{i,1},\ldots, a_{i,d_{i}})\,\,: (a_{i,1},\ldots, a_{i,d_{i}})\mbox{ arises from a minimal }k\mbox{-cover}\}|\leq d_{i}(k+1),$$ 
and $\mu(I^{(k)})\leq (\prod_{i=1}^m d_i)(k+1)^m$.
\end{lemma}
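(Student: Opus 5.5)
The plan is to work directly with the characterization (i)--(ii) of minimal $k$-covers stated just before the lemma. Fix $i$ and set $a_{i,\min}=\min_j a_{i,j}$. Two facts drive everything: a vertex may lower its exponent whenever no constraint containing it is tight, and the constraints involving parts other than $i$ do not distinguish between the vertices inside part $i$.

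For (a), suppose $a_{i,j}\le k/2$ and take $\ell\ne j$ in $[d_i]$. The pair condition gives $a_{i,\ell}\ge k-a_{i,j}\ge k/2$. Assume, for contradiction, that $a_{i,\ell}>k-a_{i,j}$. Then $a_{i,\ell}>0$, so by (ii) some constraint through $(i,\ell)$ is tight.

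A tight pair constraint would read $a_{i,\ell}+a_{i,h}=k$ for some $h\ne\ell$. Since $a_{i,h}<a_{i,j}$, this forces $h\neq j$ and $a_{i,h}<k/2$. But then $a_{i,j}+a_{i,h}<k$, which is impossible.

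A tight triple constraint would read $a_{i,\ell}+a_{p,q}+a_{r,s}=k$ with $p,r\ne i$. Replacing $a_{i,\ell}$ by $a_{i,j}<a_{i,\ell}$ gives $a_{i,j}+a_{p,q}+a_{r,s}<k$, violating (i).

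Both cases give a contradiction, so $a_{i,\ell}=k-a_{i,j}$.

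For (b), suppose all $a_{i,j}>k/2$. Then any pair inside part $i$ sums to more than $k$, so no pair constraint there is tight. Hence every $a_{i,j}$ (all positive) lies on a tight triple constraint $a_{i,j}+a_{p,q}+a_{r,s}=k$ with $p,r\ne i$. If $a_{i,j}>a_{i,j'}$, then substituting $a_{i,j'}$ into this triple violates (i). So all $a_{i,j}$ are equal to $a_{i,\min}$.

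For the count, note that every entry satisfies $0\le a_{i,j}\le k$. This holds because a minimal generator has $a_{i,j}\le k$: lowering an exponent above $k$ to $k$ keeps every constraint satisfied, since exponents are nonnegative. There are two cases.

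\textbf{Case (b).} The block $(a_{i,1},\dots,a_{i,d_i})$ is a constant vector with value in $\{\lfloor k/2\rfloor+1,\dots,k\}$.

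\textbf{Case (a).} Let $j$ be an index with $a_{i,j}\le k/2$, and take the value $v=a_{i,j}\in\{0,\dots,\lfloor k/2\rfloor\}$. Then every other coordinate equals $k-v$. If $v<k/2$, the block is determined by the pair $(j,v)$, giving at most $d_i(\lceil k/2\rceil)$ vectors. If $v=k/2$, the block is the constant vector $k/2$.

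Altogether there are at most $d_i\lceil k/2\rceil+\lceil k/2\rceil+1\le d_i(k+1)$ blocks. When $d_i=1$ the count is trivially at most $k+1$.

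Since a minimal $k$-cover is determined by its blocks, multiplying over $i$ gives $\mu(I^{(k)})\le\prod_i d_i(k+1)^m$.

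The only delicate points are the tight-constraint bookkeeping in (a) and the upper bound $a_{i,j}\le k$. Both are handled as above.
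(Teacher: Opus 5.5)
Your proof is correct and follows essentially the same route as the paper. For (a) and (b) you use the characterization (i)--(ii) of minimal $k$-covers and compare each constraint through $(i,\ell)$ with the matching constraint through $(i,j)$. You assume a tight constraint and derive a contradiction, whereas the paper shows directly that every constraint containing $a_{i,\ell}$ is strict; this is the same argument in contrapositive form. Your explicit count of the possible blocks correctly fills in the ``consequently'' step that the paper leaves to the reader, including the bound $0\le a_{i,j}\le k$ and the separate treatment of $d_i=1$.
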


\begin{proof}
(a) Without loss of generality assume that $a_{i,1}\leq k/2$. We have $a_{i,1}+a_{i,\ell}\geq k$, that is, $a_{i,\ell}\geq k-a_{i,1}$ for any $\ell\neq1$. We claim that $a_{i,\ell}=k-a_{i,1}$ for any $\ell\neq1$. Suppose to the contrary, that $a_{i,\ell}>k-a_{i,1}$ for some $\ell\neq1$. We show that any inequality in (i) containing $a_{i,\ell}$ is strict. This will give a contradiction to condition (ii). 
Let $h\in [d_i]\setminus \{\ell\}$. If $h=1$, then by our assumption, $a_{i,\ell}+a_{i,1}>k$. Now, assume that $h\neq 1,\ell$. Then we have $a_{i,\ell}+a_{i,h}>(k-a_{i,1})+(k-a_{i,1})=2(k-a_{i,1})\geq k$. Next, consider an inequality given in (i) of the form $a_{i,\ell}+b+c\geq k$. Then we also have $a_{i,1}+b+c\geq k$.
Hence, we get $b+c\geq k-a_{i,1}$, which implies $a_{i,\ell}+b+c>(k-a_{i,1})+(k-a_{i,1})=2(k-a_{i,1})\geq k$. Consequently, every inequality containing $a_{i,\ell}$ is strict, which is a contradiction.
\medskip

(b) Set $a_{i,r}=\max\{a_{i,j}: j\in [d_i]\}$. We have $a_{i,r}+a_{i,\ell}>k$ for every $\ell\neq r$. Hence, by (2), there exist $b,c$ such that $a_{i,r}+b+c=k$. We also have $a_{i,\ell}+b+c\geq k$, which implies that $a_{i,\ell}\geq k-b-c=a_{i,r}\geq a_{i,\ell}$. Thus $a_{i,\ell}=a_{i,r}$ for all $\ell$.
\end{proof}

\begin{thm}\label{limDepth}
Let $G=K_{d_1,\dots,d_m}$ be a complete multipartite graph on $n\geq 3$ vertices. Let $s=|\{i\,\,: d_{i}=1\}|$ and $t=|\{i\,\,: d_{i}\geq2\}|$. Then 
$$\lim_{k\rightarrow\infty}\depth S/I_{c}(G)^{(k)}=\begin{cases}
n-m-1, & \text{if } d_i\geq 2 \text{ for all } i, \\
n-t-\min\{3,s\}, & \text{otherwise}. 
\end{cases}$$ 
Moreover, for any  $k\geq 1$ we have  $$\depth S/I_{c}(G)^{(k)}\geq \begin{cases}
n-m-1, & \text{if } d_i\geq 2 \text{ for all } i,\\
n-t-\min\{3,s\}, & \text{otherwise}. 
\end{cases}$$ 
\end{thm}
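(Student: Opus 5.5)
By Theorem~\ref{limit}, both assertions follow once we show that the symbolic analytic spread satisfies
$$\ell_s(I)=\begin{cases} m+1, & \text{if } d_i\geq 2 \text{ for all } i,\\ t+\min\{3,s\}, & \text{otherwise}.\end{cases}$$
Indeed, part (2) of that theorem gives the limit, and part (1) gives the lower bound for all $k\geq1$. So the whole task reduces to computing the Krull dimension of $\mathcal{F}_s(I)$.

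To compute $\dim\mathcal{F}_s(I)$ we pass to a Veronese subalgebra. Since $\mathcal{R}_s(I)$ is generated in degrees $\le 6$ (Theorem~\ref{symbolic Rees}), the subring $\bigoplus_k I^{(Nk)}t^{Nk}$ with $N=12$ is standard graded in the sense that $I^{(Nk)}=(I^{(N)})^k$. Hence $\mathcal{F}_s(I)^{(N)}$ is the fiber cone of $I^{(N)}$, and it has the same Krull dimension as $\mathcal{F}_s(I)$. The function $k\mapsto\mu(I^{(Nk)})$ is eventually a polynomial of degree $\ell_s(I)-1$. So it suffices to determine the growth order of $\mu(I^{(k)})$ as $k\to\infty$. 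Since $\mu(I^{(k)})$ agrees with a quasi-polynomial of degree $\ell_s-1$, two-sided bounds of order $k^{D}$ along multiples of $N$ give $\ell_s=D+1$.

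The counting uses Lemma~\ref{k-cover}. That lemma says each block $(a_{i,1},\dots,a_{i,d_i})$ of a minimal $k$-cover is either constant with value $>k/2$, or has one small entry $c\le k/2$ with all other entries equal to $k-c$. For $d_i\ge2$ this is a one-parameter family; for $d_i=1$ it is a single number $a_i$. I would parametrize a minimal $k$-cover by the number $c_i=\min_j a_{i,j}$ for each part, together with the position of the minimum. The pairwise conditions inside a part are then automatic. The triple conditions become $c_{i_1}+c_{i_2}+c_{i_3}\ge k$ for parts $i_1<i_2<i_3$, and minimality condition (ii) becomes a tightness condition on each $c_i$.

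For the upper bound, I would show that at most a bounded number of the $c_i$ can be \emph{free}, i.e.\ not forced by the others.
\begin{itemize}
\item When all $d_i\ge2$, each part contributes one free parameter $c_i\le k/2$, subject to one global tightness relation. I expect this to give about $k^{m}$ choices, hence $\ell_s=m+1$.
\item When singleton parts exist, their coordinates $a_i$ must satisfy tightness among themselves. Among the singletons only about three values can vary independently: the three smallest singleton values determine the triple constraints, and the rest are forced by minimality. This yields roughly $k^{t+\min\{3,s\}-1}$ generators.
\end{itemize}

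For the matching lower bound, I would exhibit explicit families of minimal $k$-covers with that many degrees of freedom and verify conditions (i) and (ii) for each.

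The main obstacle is making the upper bound on degrees of freedom rigorous in the mixed case. One must show that minimality forces all but $\min\{3,s\}$ singleton values, and all but the appropriate number of the $c_i$ in non-singleton parts, to be determined by the others through equalities of the form $a+b+c=k$. At the same time one must confirm that the count is exactly $t+\min\{3,s\}$ rather than one less. The first thing I would try is a case analysis on which triple attains the minimum sum.

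Alternatively, one could compute $\ell_s$ directly as $\dim$ of the linear span of the exponent vectors of the generators of $\mathcal{R}_s(I)$ listed in Theorem~\ref{multipartitre-Rees}, viewed in $\mathbb{Q}^{n+1}$ and with the $S$-part removed. This is the rank of the matrix of exponent vectors of those generators, using $\dim\mathcal{F}_s=\max$ rank over faces of the cone of the multiplicative structure. I would use this as a check on the counting.
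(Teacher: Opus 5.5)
Your reduction is the same as the paper's. Theorem~\ref{limit} turns both assertions into computing $\ell_s(I)$. A standard graded Veronese subalgebra of $\mathcal F_s(I)$ turns this into the growth rate of $\mu(I^{(k)})$, and Lemma~\ref{k-cover} is the tool for counting minimal $k$-covers. The gap is that everything after this reduction, which is the real content of the theorem, is only announced (``I expect'', ``roughly'', ``I would exhibit''). Moreover, the heuristics you give for your exponents actually point to wrong ones.

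\textbf{Upper bound in the mixed case $1\le s<m$.} The missing idea is that a singleton coordinate $a_{r,1}$ occurs in no pairwise inequality. So (i) and (ii) force it to be the \emph{least} value satisfying its triple inequalities: $a_{r,1}=\max\{0,k-\sigma_r\}$, where $\sigma_r$ is the minimum of $a_{i,j}+a_{\ell,h}$ over coordinates from two distinct parts $i\neq\ell$, both different from $r$. Now fix the non-singleton blocks $\mathbf a'$; Lemma~\ref{k-cover} allows at most $\prod_{d_i\ge2}d_i\,(k+1)^t$ choices. Order the singletons $a_{1,1}\le\cdots\le a_{s,1}$, at the cost of a factor $s!$. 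Then:
\begin{itemize}
\item for $r\ge3$, $\sigma_r$ depends only on $\mathbf a'$, $a_{1,1}$, $a_{2,1}$, and hence so does $a_{r,1}$;
\item when $s=2$, $a_{2,1}$ is a function of $\mathbf a'$ and $a_{1,1}$;
\item when $s=1$, $a_{1,1}$ is a function of $\mathbf a'$.
\end{itemize}
This gives $\mu(I^{(k)})\le C(k+1)^{t+\min\{s-1,2\}}$. Your remark that ``only about three [singleton] values can vary independently'' would give exponent $t+3$. Only two are free; the third is forced by them.

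\textbf{Lower bound.} You must exhibit families of minimal $k$-covers with the right number of independent parameters and verify (i) and (ii); the proposal does not.

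In the case $d_i\ge2$ for all $i$, your ``one free parameter per part, subject to one global tightness relation'' would cut the exponent to $m-1$ and give $\ell_s=m$. In fact no such relation exists. Take $6\mid k$ and arbitrary integers $k/3<r_i\le k/2$, and use the blocks $(r_i,k-r_i,\dots,k-r_i)$. Each block is tight internally via $r_i+(k-r_i)=k$, and every triple inequality holds because all entries exceed $k/3$. This gives $(k/6)^m$ minimal $k$-covers, matching the bound $\prod d_i\,(k+1)^m$ from Lemma~\ref{k-cover}.

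In the mixed case you need explicit constructions. For $s\ge3$, for example:
\begin{itemize}
\item choose $k/3<r_1<r_2<r_{s+1}<\cdots<r_m\le k/2$;
\item take the non-singleton blocks as above;
\item give the singletons the values $r_1,\,r_2,\,k-r_1-r_2,\,r_2,\dots,r_2$.
\end{itemize}
One checks these are minimal $k$-covers, which yields $\binom{k/6}{t+2}$ of them. Analogous families with one, respectively no, free singleton parameter treat $s=2$ and $s=1$. Without these two ingredients, $\ell_s(I)$, and hence the theorem, is not established.

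\textbf{Two smaller points.} First, the identity $I^{(12k)}=(I^{(12)})^k$ is asserted without proof, though any $p$ making the Veronese standard graded suffices. Second, your alternative via the rank of exponent vectors is not valid as stated. $\mathcal F_s(I)$ is only a quotient of the toric ring on those monomials, so that rank merely bounds $\ell_s(I)$ from above.
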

\begin{proof}
Set $I=I_c(G)$. By Theorem~\ref{limit}, we have
$\lim_{k\rightarrow\infty}\depth S/I^{(k)}=n-\ell_s(I)$. So  it is enough to show that 
\begin{equation}\label{symbAnSp}
\ell_s(I)=\begin{cases}
m+1, & \text{if } d_i\geq 2 \text{ for all } i, \\
t+\min\{3,s\}, & \text{otherwise}. 
\end{cases}  
\end{equation}

Since $\mathcal{F}_s(I)$ is a finitely generated $K$-algebra, by \cite[Theorem 2.1]{HHT} there exists a positive integer $p$ such that the $p$th Veronese subalgebra $\mathcal{F}_s(I)^{(p)}$ of $\mathcal{F}_s(I)$ is a standard graded $K$-algebra. We set $A=\mathcal{F}_s(I)^{(p)}$. Then $A$ has a Hilbert polynomial, say $h_A$ of degree $\dim A-1$ such that $h_A(i)=\dim_K(\mathcal{F}_s(I)_{pi})$ for $i\gg 0$. Note that $\dim A=\dim\,\mathcal{F}_s(I)=\ell_s(I)$, since $\mathcal{F}_s(I)$ is a finitely generated module over $A$.
On the other hand, $\dim_K(\mathcal{F}_s(I)_{k})=\mu(I^{(k)})$ for all $k$, where $\mu(I^{(k)})$ denotes the number of minimal monomial generators of $I^{(k)}$.
Therefore, $$h_A(i)=\mu(I^{(pi)})=|\{\textbf{a}:\ \textbf{a}  \text{ is a minimal $pi$-cover of } I\}| \text{\ \ \  for } i\gg 0,$$ 
and $h_A$ is of degree $\ell_s(I)-1$.
Thus, in order to prove (\ref{symbAnSp}), we need to show that $$\deg(h_A)=\begin{cases}
m, & \text{if } d_i\geq 2 \text{ for all } i, \\
t+\min\{2,s-1\}, & \text{otherwise}. 
\end{cases}$$  

\textbf{Case 1}. Let $d_i\geq 2$ for all $i$. First we show that $\deg(h_A)\geq m$. 
Let $k$ be a positive integer such that $6|k$. For any $1\leq i\leq m$, choose an arbitrary integer $k/3<r_i\leq k/2$. We show that the vector $\textbf{a}$ defined as $$a_{i,j}=\begin{cases}
r_i, & \text{if } j=1, \\
k-r_i, & \text{if } 1<j\leq d_i. 
\end{cases}$$  
is a minimal $k$-cover of $I$. 
We have $a_{i,j}>k/3$ for all $i$ and $j$. Moreover, for distinct integers $j,\ell\in [d_i]$ we have either $a_{i,j}+a_{i,\ell}=k$ or $a_{i,j}+a_{i,\ell}=2k-2r_i\geq k$. So the vector $\textbf{a}$ satisfies (i). For arbitrary $i$ and $j$, we have $a_{i,j}+a_{i,1}=k$ for any $j\neq 1$, and hence $\textbf{a}$ satisfies (ii), as well. Thus, for any $k$ with $6|k$, each sequence of integers $r_1,\ldots,r_m$ with  $k/3<r_i\leq k/2$ gives a minimal $k$-cover of $I$.   
Thus $\mu(I^{(k)})\geq (k/6)^m$. This implies that $\deg(h_A)\geq m$.
Conversely, by Lemma~\ref{k-cover}, we have $\mu(I^{(k)})\leq (\prod_{i=1}^m d_i)(k+1)^m$. Consequently, we obtain the reverse inequality $\deg(h_A)\leq m$.  

\medskip

\textbf{Case 2}. Let $d_i=1$ for some $i$. First assume that $s=m$. Then $d_i=1$ for all $i$, and $G$ is the complete graph $K_n$. So $I=I_c(K_n)$ is the Stanley--Reisner ideal of a matroid. Thus using \cite[Theorem 2.1]{v2011} and Theorem \ref{min prime} we obtain $\depth S/I^{(k)}=\dim S/I^{(k)}=n-3$ for all $k\geq 1$.

Now, let $1\leq s<m$. We may assume that $d_{1}=\cdots=d_{s}=1$ and $d_{s+1},\ldots, d_{m}\geq2$.
First we show that $\deg(h_A)\geq t+\min\{2,s-1\}$. 
Let $k$ be a positive integer such that $6|k$. We distinguish the following cases.

\textbf{Subcase 2.1}.
Suppose that $s=1$.  
For any $2\leq i\leq m$, choose an arbitrary integer $\dfrac{k}{3}<r_i\leq \dfrac{k}{2}$. It is straightforward to see that the vector $\textbf{a}$ defined as $$a_{i,j}=\begin{cases}
r_i, & \text{if } 2\leq i\leq m, j=1, \\
k-r_i, & \text{if } 2\leq i\leq m, 1<j\leq d_i,\\
k-\min\{r_i+r_j: 2\leq i<j\leq m\}, & \text{if } i=1, j=1.
\end{cases}$$  
is a minimal $k$-cover of $I$. This shows that $\mu(I^{(k)})\geq (\dfrac{k}{6})^{m-1}$, and hence $\deg(h_A)\geq m-1=t=t+\min\{2,s-1\}$.   

\textbf{Subcase 2.2}. Suppose that $s=2$. 
Corresponding to every sequence of integers $\dfrac{k}{3}<r_2<r_3<\cdots<r_m\leq \dfrac{k}{2}$, we have the minimal $k$-cover $\textbf{a}$ defined as  
$$a_{i,j}=\begin{cases}
r_i, & \text{if } 2\leq i\leq m, j=1, \\
k-r_i, & \text{if } 3\leq i\leq m, 1<j\leq d_i,\\
k-(r_2+r_3): 2<j\leq m\}, & \text{if } i=1, j=1.
\end{cases}$$ 
Hence, $\mu(I^{(k)})\geq {k/6\choose m-1}$, which is a function in $k$ of degree $m-1$.
So we conclude that $\deg(h_A)\geq m-1=t+1=t+\min\{2,s-1\}$, as required.

\textbf{Subcase 2.3}. Suppose that $s\geq 3$. Corresponding to every sequence of integers $\dfrac{k}{3}<r_1<r_2<r_{s+1}\cdots<r_m\leq \dfrac{k}{2}$, there exists a minimal $k$-cover $\textbf{a}$ of $I$, defined as 
$$a_{i,j}=\begin{cases}
r_i, & \text{if } s+1\leq i\leq m, j=1, \\
k-r_i, & \text{if } s+1\leq i\leq m, 1<j\leq d_i,\\
r_i, & \text{if } 1\leq i\leq 2, j=1,\\
k-(r_1+r_2), & \text{if } i=3, j=1,\\
r_2, & \text{if } 3<i\leq s, j=1.
\end{cases}$$   
So we conclude that $\mu(I^{(k)})\geq {k/6\choose t+2}$, which is a function in $k$ of degree $t+2$.
Therefore, $\deg(h_A)\geq t+2=t+\min\{2,s-1\}$.

We now prove the converse inequality. It suffices to show that $\mu(I^{(k)})\leq c(k+1)^{t+\min\{s-1,2\}}$ for some $c>0$. By Lemma~\ref{k-cover}, for any $i\geq s+1$ we have
$$|\{(a_{i,1},\ldots, a_{i,d_{i}})\,\,: (a_{i,1},\ldots, a_{i,d_{i}})\mbox{ arises from a minimal }k\mbox{-cover}\}|\leq d_{i}(k+1).$$ Now, fix a vector $\textbf{a}'=(a_{s+1,1},\ldots, a_{s+1,d_{s+1}},\ldots, a_{m,1},\ldots, a_{m,d_{m}})$ arising from a minimal $k$-cover. An inequality containing exactly one of $a_{1,1},\ldots, a_{s,1}$ has the form $a_{r,1}+a_{i,j}+a_{\ell,h}\geq k$ with $i,\ell\geq s+1$. Since $a_{i,j}$ and $a_{\ell,h}$ are fixed, there exists $\alpha$ such that $a_{r,1}\geq\alpha$ for all $r$. Similarly, we see that there exists $\beta$ such that $a_{r,1}+a_{u,1}\geq\beta$ and $a_{r,1}+a_{u,1}+a_{v,1}\geq k$ for all distinct $r,u,v$. We first suppose that $s=1$. Then there exists $1<i<\ell$ such that $a_{1,1}+a_{i,j}+a_{\ell,h}=k$ for some $j$ and $h$. Thus $a_{1,1}$ is determined by the vector $\textbf{a}'$. So we obtain $$\mu(I^{(k)})\leq (\prod_{i=2}^md_{i})(k+1)^{t}=(\prod_{i=2}^md_{i})(k+1)^{t+\min\{s-1,2\}}.$$    
Next, suppose that $s=2$. We may assume that $a_{1,1}\leq a_{2,1}$. Fix $a_{1,1}$. Then by the minimality of the $k$-cover, $a_{2,1}$ is determined by ${\bf a}^{\prime}$ and $a_{1,1}$. Since $0\leq a_{1,1}\leq k$, there are at most $k+1$ choices after ordering, and hence, we obtain $\mu(I^{(k)})\leq (2\prod_{i=3}^md_{i})(k+1)^{t+1}=(2\prod_{i=3}^m d_{i})(k+1)^{t+\min\{s-1,2\}}$. Finally, suppose that $s\geq3$. We may assume that $a_{1,1}\leq a_{2,1}\leq\cdots\leq a_{s,1}$. Fix $a_{1,1}$ and $a_{2,1}$. Then by the minimality of $k$-cover, $a_{3,1},\ldots, a_{s,1}$ are determined by ${\bf a}^{\prime}$, $a_{1,1}$ and $a_{2,1}$. Since $0\leq a_{1,1}, a_{2,1}\leq k$, there are at most $(k+1)^{2}$ choices after ordering, and hence at most $s!(k+1)^{2}$ choices without ordering. Therefore, we obtain 
$\mu(I^{(k)})\leq (s!\prod_{i=s+1}^m d_{i})(k+1)^{t}(k+1)^{2}=(s!\prod_{i=s+1}^m d_{i})(k+1)^{t+\min\{s-1,2\}}$, which completes the proof. 
\end{proof}

\begin{thm}\label{LQ of complete multi}
Let $G=K_{d_1,\dots,d_m}$ be a complete multipartite graph on $n$ vertices, and let $I=I_c(G)$. Then $I^{(k)}$ has linear quotients. In particular, $I^{(k)}$ is componentwise linear. 
\end{thm}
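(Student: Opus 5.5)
We plan to prove that $I^{(k)}$ has linear quotients by describing $\mathcal{G}(I^{(k)})$ explicitly through minimal $k$-covers, exploiting the rigid block structure of Lemma~\ref{k-cover}, and then choosing a suitable order on these generators. By Lemma~\ref{k-cover}, for each part $i$ with $d_i\ge2$ the restriction $(a_{i,1},\dots,a_{i,d_i})$ of a minimal $k$-cover has one of two shapes: either one coordinate equals some $r\le k/2$ and all the others equal $k-r$, or all coordinates are equal to some value $>k/2$. For parts with $d_i=1$ there is a single value $a_{i,1}$. So a minimal generator is encoded by the data: for each part, a ``level'' $c_i=\min_j a_{i,j}$ together with (when $c_i\le k/2$ and $d_i\ge 2$) a distinguished index $j_i$; and the conditions become the triple inequalities $c_{i_1}+c_{i_2}+c_{i_3}\ge k$ on the three smallest levels plus the minimality condition (ii).

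The order we propose is the following. Generators with larger total degree come later, and within a fixed degree we use the reverse lexicographic order on the sorted vector of levels. Ties with equal levels are broken by the lexicographic order on the tuple of distinguished indices $(j_i)$. The intuition is the same as for cycles in Theorem~\ref{main-cycle}(c): moving a unit of exponent between two coordinates, or shifting the distinguished position inside a block, should produce the earlier generators. So for $w\in\mathcal{G}(I^{(k)})$ we want to show that every earlier $v$ admits an earlier $v'$ with $v':w=x_{p,q}$ a single variable dividing $v:w$.

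The verification goes through the possible exchange moves. For two generators agreeing except in part $i$, where $w$ has distinguished index $j$ and $v$ has index $j'<j$, the colon is $x_{i,j}$. This is because $v/x_{i,j'}^{\,k-2r}\cdot x_{i,j}^{\,k-2r}$ changes only those coordinates. When levels differ, we pick a coordinate where $v$ exceeds $w$ and decrease a coordinate of $w$ where $w$ exceeds $v$; by the triple-inequality description this still gives a $k$-cover. One then passes to a minimal generator dividing it and checks that it precedes $w$.

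The main obstacle is this last step. Decreasing one exponent of $w$ and increasing another may break condition (ii) or the block shape of Lemma~\ref{k-cover}, since a block with $r\le k/2$ forces the others to be exactly $k-r$. So one must adjust a whole block simultaneously, raising one entry and lowering $d_i-1$ entries at once. We would handle this first by working with the quotient description $I^{(k)}=\sum_t I^{(t)}I^{(k-t)}$ from Theorem~\ref{multipartitre-Rees} restricted to degrees $1,2,3$ and $6$, as was done with (\ref{eq:decompI_c(C_n)}) for cycles. If that becomes unwieldy, we would instead use the criterion that a monomial ideal has linear quotients with respect to an order if for every $v>w$ there exist a variable $x\mid v:w$ and some $v'>w$ with $v':w=x$. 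We would prove this case by case: according to whether the part where $v,w$ first differ has $d_i=1$, has level $\le k/2$, or has level $>k/2$.
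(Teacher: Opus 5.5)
Your proposal is a plan rather than a proof. The whole content of the theorem sits in the ``case by case'' verification you defer, and the concrete steps you do state are not correct as written.

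\emph{The index swap.} Suppose $v,w$ agree outside part $i$, both have level $r\le k/2$ there, and their distinguished indices are $j'<j$. Then $v:w=x_{i,j}^{\,k-2r}$, which is a variable only when $k-2r=1$. Otherwise you need a \emph{different} earlier generator, e.g.\ the one obtained by replacing $w_{i,j,r}=x_{i,j}^{r}(\mathbf{x}_i/x_{i,j})^{k-r}$ by $w_{i,j,r+1}$, as the paper does. When $d_i=2$ this has the same degree as $w$, so whether it comes earlier depends on a secondary order you have not fixed; you do not say which direction ``reverse lexicographic'' goes.

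\emph{The unit exchange.} A unit exchange $x\,w/y$ need not lie in $I^{(k)}$. In a part of level $r\le k/2$, lowering a coordinate equal to $k-r$ violates $a_{i,j}+a_{i,\ell}\ge k$ unless the raised variable is the distinguished one of that same part; your reduction to triple inequalities on levels is valid only for block-shaped vectors. Lowering a singleton coordinate violates every tight triple through it that avoids the raised coordinate.

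\emph{The order.} It is not total, since sorting the levels forgets which part carries which level. For $m=3$, $d_1=d_2=d_3=2$, $k=5$, the generators with levels $(2,2,1)$, $(2,1,2)$, $(1,2,2)$ and all $j_i=1$ share degree $15$, sorted levels and indices.

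What is missing is the structural input that makes a good choice of exchange possible, which the paper proves before ordering anything. Write a minimal generator as $\prod_{i\in A}w_{i,j_i,r_i}\prod_{i\notin A}\mathbf{x}_i^{\alpha_i}$ with $r_i\le k/2<\alpha_i$. Then either $A=[m]$, or $|A|=2$, $r_1+r_2<k/2$ and $\alpha_i=k-(r_1+r_2)$ for all $i\notin A$. Moreover, for generators with $A=[m]$ and $s\ge3$ singleton parts with sorted levels $r_{q_1}\le\cdots\le r_{q_s}$, either $r_{q_2}=\cdots=r_{q_s}$ or $r_{q_3}=\cdots=r_{q_s}=k-(r_{q_1}+r_{q_2})$.

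These facts identify the tight triples. The paper's Case 2 uses them to choose the exchange $x_{b,1}w/x_{b',1}$ so that every tight triple of $w$ through $b'$ contains $b$. It then shows that a minimal divisor of this monomial still precedes $w$ in the paper's order: type (1) before type (2); within a type, lexicographically larger level vector first (for type (1), unsorted block levels followed by sorted singleton levels); then pure lex.

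Your fallback through $I^{(k)}=\sum_tI^{(t)}I^{(k-t)}$ does not reduce the problem. For cycles only $\mathbf{x}t^2$ is added and $I^{(k)}=\sum_p\mathbf{x}^pI^{k-2p}$. Here Theorem~\ref{multipartitre-Rees} has many generators in degrees $2,3,6$, and linear quotients do not pass to such sums of products.

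One thing your degree-increasing order does buy: if $z=x\,w/y\in I^{(k)}$, then any minimal generator $z'\mid z$ with $z'\neq z$ has smaller degree and still satisfies $z':w=x$. So the ``pass to a minimal divisor'' step becomes automatic. But showing that a suitable $z$ exists in every configuration is exactly what you have not done.
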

\begin{proof}
If $k=1$, then by \cite[Corollary 3.2]{fmStanley-Reisner2025}, the assertion holds. So we may assume that $k\geq 2$. Moreover, if $m=2$, then $I^{(k)}=J(K_{d_1})^{(k)}\cap J(K_{d_2})^{(k)}=J(K_{d_1})^{(k)}J(K_{d_2})^{(k)}$, where $K_{d_i}$ denotes the complete graph on the vertex set $X_i=\{x_{i,\ell}: \ell\in [d_i]\}$. Since $J(K_{d_i})$ is the squarefree Veronese ideal $I_{d_i,d_i-1}$, by \cite[Corollary 3.3]{fmpoly2025} the ideal $J(K_{d_i})^{(k)}$ has linear quotients for $i=1,2$. Now, by \cite[Lemma 4.7(b)]{fmpoly2025}, $J(K_{d_1})^{(k)}J(K_{d_2})^{(k)}$ has linear quotients. So from now on we assume that $m\geq 3$ and $k\geq 2$.
Without loss of generality, let $d_1=\cdots=d_s=1$ and $d_i\geq 2$ for any $s+1\leq i\leq m$.		
For each
$i\in[m]$, let $\mathbf{x}_i=\prod_{\ell\in[d_i]}x_{i,\ell}$, and for any $j\in[d_i]$, and $0\le r\le k/2$, let
$w_{i,j,r}=
x_{i,j}^{\,r}(\mathbf{x}_i/x_{i,j})^{k-r}$. Notice that if $d_i=1$, then
$w_{i,1,r}=x_{i,1}^{\,r}$.
By Lemma~\ref{k-cover}, every minimal monomial generator of $I^{(k)}$
is of the form
\begin{equation}\label{generator}
	u=\prod_{i\in A}w_{i,j_i,r_i}
	\prod_{i\in[m]\setminus A}\mathbf{x}_i^{\,\alpha_i},
\end{equation}
where $A\subseteq[m]$, $0\le r_i\le k/2$, $\alpha_i>k/2$ for every
$i\in [m]\setminus A$, and $r_i+r_j+r_h\ge k$
whenever $i,j,h$ are distinct elements of $A$.
We first show that either $A=[m]$ or $|A|=2$.
Suppose that $A\neq[m]$ and choose $p\in[m]\setminus A$.
Assume, by contradiction, that $|A|\ge3$.
For any distinct $i,j,h\in A$, we have
$r_i+r_j+r_h\ge k$
and $r_h\le k/2$, which imply
$r_i+r_j\ge k/2.$
Consequently,
$\alpha_p+r_i+r_j>k$
for every pair of distinct indices $i,j\in A$.
Likewise,
$\alpha_p+\alpha_q+r_i\ge
\alpha_p+\alpha_q>
\frac{k}{2}+\frac{k}{2}=k$
for every $q\neq p$. Since $k-r_i\geq r_i$ for all $i$, replacing some $r_\ell$'s by $k-r_\ell$ in the inequalities $\alpha_p+r_i+r_j>k$ and $\alpha_p+\alpha_q+r_i>k$ still yield strict inequalities.
Thus for any $x_{p,\ell}$ with $p\in [m]\setminus A$, the monomial $u/x_{p,\ell}$ still belongs to $I^{(k)}$,
contradicting the minimality of $u$ given in \eqref{generator}. Hence $|A|\le2$.
On the other hand, if $|A|\le1$, the same argument shows that one may
again decrease one of the exponents of $u$ (corresponding to a variable $x_{p,\ell}$ with $p\in [m]\setminus A$),  without leaving
$I^{(k)}$, contradicting minimality. Thus $|A|=2$ whenever
$A\neq[m]$, proving the claim.

It follows that a monomial $u$ is a minimal generator of $I^{(k)}$ if
and only if one of the following holds.

\begin{enumerate}
	\item
	$u=\prod_{i=1}^{m}w_{i,j_i,r_i}$,
	where $j_i\in[d_i]$, $0\le r_i\le k/2$ for all $i$, $r_i+r_j+r_h\ge k$
	whenever $i,j,h$ are distinct, and for every $i\in [s]$ with $r_i>0$, there exist distinct $j,k\in [m]\setminus \{i\}$ such that $r_i+r_j+r_h=k$.
	
	\item
	$u=u_{i_1,j_1,r_1,i_2,j_2,r_2}
	=
	w_{i_1,j_1,r_1}
	w_{i_2,j_2,r_2}
	\prod_{i\neq i_1,i_2}
	\mathbf{x}_i^{\,k-(r_1+r_2)}$,
	where $i_1\neq i_2$, $j_t\in[d_{i_t}]$, and
	$r_1+r_2<k/2$.
\end{enumerate}

To every generator $u$ of type \textup{(1)} we associate the vector $f_u=(r_{s+1},\ldots,r_m,r_{q_1},\ldots,r_{q_s})$, where $\{q_1,\ldots,q_s\}=[s]$ and $r_{q_1}\leq \cdots \leq r_{q_s}$. Moreover, for a generator $v=u_{i_1,j_1,r_1,i_2,j_2,r_2}$ of type \textup{(2)} we define
$f_v=(\min\{r_1,r_2\},\max\{r_1,r_2\})$. 

If $s\geq 3$, then for any monomial $u=\prod_{i=1}^{m}w_{i,j_i,r_i}$ of type (1) with  $f_u=(r_{s+1},\ldots,r_m,r_{q_1},\ldots,r_{q_s})$, we have either $r_{q_2}=\cdots=r_{q_s}$, or $r_{q_2}<r_{q_{3}}=\cdots=r_{q_s}$, with $r_{q_{3}}=\cdots=r_{q_s}=k-(r_{q_1}+r_{q_2})$. 
To see this, suppose $r_{q_h}<r_{q_{h+1}}$, for some $2\leq h<s$. Since $u$ is minimal, $r_{q_{h+1}}+r_i+r_j=k$ for some $i$ and $j$ with $i,j$ and $q_{h+1}$ distinct. If  $q_2\notin\{i,j\}$, then  $r_{q_2}+r_i+r_j\geq k=r_{q_{h+1}}+r_i+r_j$, which yields $r_{q_2}\geq r_{q_{h+1}}$, a contradiction. So we have $q_2\in\{i,j\}$. Similarly, since $r_{q_1},\ldots,r_{q_h}<r_{q_{h+1}}$,  we have $q_1,\ldots,q_h\in\{i,j\}$. Hence, $h=2$ and
$\{i,j\}=\{q_1,q_2\}$. Moreover, $r_{q_{h+1}}=k-(r_{q_1}+r_{q_2})$. Replacing $h+1$ by any integer $h+1\leq \ell\leq s$, the same argument shows that $r_{q_{\ell}}=k-(r_{q_1}+r_{q_2})$.

We introduce a total order $>$ on $\mathcal G(I^{(k)})$ as follows.

\begin{enumerate}
	\item[(a)]
	Every generator of type \textup{(1)} is larger than every generator of
	type \textup{(2)}.
	
	\item[(b)]
	If two generators $u$ and $v$ are of the same type, we set $u>v$,  if $f_u>_{\lex} f_v$ or $f_u=f_v$ and $u>_{\lex}v$, where $>_{\lex}$ denotes the pure 
	lexicographic order
	induced by $x_{s+1,1}>\cdots> x_{s+1,d_{s+1}}>\cdots >x_{m,1}>\cdots >x_{m,d_m}>x_{1,1}>\cdots> x_{1,d_1}>\cdots >x_{s,1}>\cdots >x_{s,d_s}$.
\end{enumerate}

We claim that this order is admissible.	
Consider two minimal monomial generators $u$ and $v$ with $u>v$.

\textbf{Case 1}. 
Let $u=\prod_{i=1}^{m}w_{i,j_i,r_i}$
be of type \textup{(1)} and let $v=u_{i_1,h_1,s_1,i_2,h_2,s_2}$
be of type \textup{(2)}. We need to find  a variable $x_{a,b}|u:v$ and a monomial $z\in \mathcal G(I^{(k)})$ with $z>v$ such that $z:v=x_{a,b}$.
Since
$s_1+s_2<k/2$ and $r_{i_1}+r_{i_2}\ge k/2$,
at least one of the inequalities
$r_{i_1}>s_1$ and $r_{i_2}>s_2$
must hold. Without loss of generality, assume that
$r_{i_1}>s_1$.
The exponent of $x_{i_1,h_1}$ in $u$ is either
$r_{i_1}$ or $k-r_{i_1}$. Since
$k-r_{i_1}\ge r_{i_1}>s_1,$
it follows that
$x_{i_1,h_1}\mid u:v.$
If $s_1+s_2+1<k/2$, then $z=u_{i_1,h_1,s_1+1,i_2,h_2,s_2}\in \mathcal G(I^{(k)})$ with $f_z>_{\lex}f_v$ and $z:v=x_{i_1,h_1}$. Now, let
$s_1+s_2+1=k/2$. Then $k$ is even, and we take $z=w_{i_1,h_1,s_1+1}w_{i_2,h_2,s_2}\prod_{i\neq i_1,i_2}w_{i,1, k/2}$. Clearly, $z\in \mathcal G(I^{(k)})$, with $z>v$ and 
$z:v=x_{i_1,h_1}$. Finally, let $s_1+s_2=\lfloor k/2\rfloor$.
Then $k$ is necessarily odd. If $s_1\leq \lfloor k/2\rfloor-1$, then we choose $z=w_{i_1,h_1,s_1+1}w_{i_2,h_2,s_2}\prod_{i\neq i_1,i_2}w_{i,1, \lfloor k/2\rfloor}$. Since $s_1+s_2+1+\lfloor k/2\rfloor=k$, we conclude that  $z\in \mathcal G(I^{(k)})$. Moreover,  $z>v$ and 
$z:v=x_{i_1,h_1}$, so we are done. It remains to consider the case $s_1+s_2=\lfloor k/2\rfloor$ and $s_1=\lfloor k/2\rfloor$. Then clearly $s_2=0$. On the other hand, having $k$ odd, the inequalities $r_{i_1}+r_{i_2}\ge \lfloor k/2\rfloor+1$ and $r_{i_1}\le \lfloor k/2\rfloor$ imply that $r_{i_2}\geq 1$. Hence, 	$x_{i_2,h_2}\mid u:v$. So it is enough to consider $z=w_{i_1,h_1,s_1}w_{i_2,h_2,s_2+1}\prod_{i\neq i_1,i_2}w_{i,1, \lfloor k/2\rfloor}$. Then  $z\in \mathcal G(I^{(k)})$, $z>v$ and 
$z:v=x_{i_2,h_2}$.

\smallskip

\textbf{Case 2}. Suppose that $u$ and $v$ are of type \textup{(1)}. Let $u=\prod_{i=1}^{m}w_{i,j_i,r_i}$ and $v=\prod_{i=1}^{m}w_{i,h_i,t_i},$
and assume that $u>v$.
Suppose first that $f_u>_{\lex}f_v$, and let $s+1\leq p\leq m$ be the index for which
$r_p>t_p$ and $r_\ell=t_\ell$ for $\ell<p$. 
Then $x_{p,h_p}\mid u:v$ and 
$z=(v/w_{p,h_p,t_p})
w_{p,h_p,t_p+1}\in I^{(k)}$. Choose $z'\in \mathcal{G}(I^{(k)})$ such that $z'|z$. Notice that for any  $s+1\leq i\leq m$ and $j\in [d_i]$ we have $\deg_{z'}(x_{i,j})=\deg_{z}(x_{i,j})$. Thus $f_{z'}>_{\lex} f_v$. Moreover, $z'$
is a minimal generator of type \textup{(1)} satisfying
$z':v=z:v=x_{p,h_p}$.
Now suppose that $r_{\ell}=t_\ell$ for any $s+1\leq \ell\leq m$. Let $f_u=(r_{s+1},\ldots,r_m,r_{a_1},\ldots,r_{a_s})$ and $f_v=(t_{s+1},\ldots,t_m,t_{b_1},\ldots,t_{b_s})$. Since $f_u>_{\lex} f_v$, there exists $p$ such that  $r_{a_{\ell}}=t_{b_{\ell}}$ for any $\ell<p$ and $r_{a_p}>t_{b_p}$. First assume that $p=1$ and $r_{a_1}>t_{b_1}$. Then $r_{b_1}\geq r_{a_1}>t_{b_1}$, which implies that $x_{b_1,1}| u:v$. Since $u$ is minimal, there exist  $j_1,j_2\in [m]$ such that $j_1$, $j_2$ and $b_1$ are distinct and $r_{b_1}+r_{j_1}+r_{j_2}=k$. We have $t_{b_1}+t_{j_1}+t_{j_2}\geq k=r_{b_1}+r_{j_1}+r_{j_2}$. Since $r_{b_1}>t_{b_1}$, we have $t_{j_1}>r_{j_1}$ or $t_{j_2}>r_{j_2}$. We may assume that $t_{j_1}>r_{j_1}$. Then  $j_1\in [s]$ and hence $j_1=b_d$ for some $d\in [s]$. So $t_{b_d}>r_{b_d}\geq r_{a_1}>t_{b_1}$ and hence, $t_{b_d}\geq t_{b_1}+2$.
We show that if $t_{b_d}+t_{\alpha}+t_{\beta}=k$, where $\alpha$, $\beta$ and $b_d$ are distinct, then $b_1\in \{\alpha,\beta\}$. Suppose on the contrary that  $b_1\notin \{\alpha,\beta\}$. Then $t_{b_1}+t_{\alpha}+t_{\beta}\geq k=t_{b_d}+t_{\alpha}+t_{\beta}$, which yields $t_{b_1}\geq t_{b_d}$, a contradiction. Hence,  $b_1\in \{\alpha,\beta\}$. Similarly, $\{b_\ell\in [s]: t_{b_\ell}<t_{b_d}\}\subseteq \{\alpha,\beta\}$.  This shows that $z=(x_{b_1,1}v)/x_{b_d,1}\in  I^{(k)}$. Let $z'\in\mathcal{G}(I^{(k)})$
such that $z'|z$. Then it follows from the discussion above that $\deg_{z'}(x_{b_1,1})=t_{b_1}+1$, $\deg_{z'}(x_{b_d,1})=t_{b_d}-1$, and for any $\ell\in[s]\setminus\{1\}$ with  $t_{b_\ell}<t_{b_d}$, $\deg_{z'}(x_{b_\ell,1})=\deg_{v}(x_{b_\ell,1})=t_{b_\ell}$. Moreover, for any 
$\ell\in[s]$ with  $t_{b_\ell}\geq t_{b_d}$, we have $\deg_{z'}(x_{b_\ell,1})\geq \deg_{v}(x_{b_\ell,1})-1=t_{b_\ell}-1\geq  t_{b_1}+1$. 
Thus $f_{z'}>_{\lex} f_v$ and
$z':v=z:v=x_{b_1,1}$. 

Now, suppose $r_{a_1}=t_{b_1}$ and $r_{a_2}>t_{b_2}$. If $a_1\neq b_1$, then $r_{b_1}\geq r_{a_2}>t_{b_2}\geq t_{b_1}$. Hence, as before $x_{b_1,1}| u:v$, and
there exists $d\in[s]$ such that $t_{b_d}>r_{b_d}\geq r_{a_1}=t_{b_1}$. If $t_{b_d}\geq t_{b_1}+2$ for some $d\in[s]$, then we take $z=(x_{b_1,1}v)/x_{b_d,1}$ and $z'\in\mathcal{G}(I^{(k)})$
such that $z'|z$. Then the same argument as above yields the desired result.
Now, let $t_{b_\ell}\leq t_{b_1}+1$
for all $\ell\in[s]$. We show that this case cannot occur by deriving a contradiction.
We have $r_{a_2}>t_{b_1}=r_{a_1}$. Hence, $r_{a_1}+r_{a_2}+r_h=k$ for some $h\neq a_1,a_2$. 
If $s+1\leq h\leq m$, then  $t_h=r_h$ and hence
$t_{b_1}+t_{b_2}+t_h<r_{a_1}+r_{a_2}+r_h=k$, a contradiction. Hence, $h\in[s]$ and $h=a_j$ for some $3\leq j\leq s$. Thus $r_{a_1}+r_{a_2}+r_{a_j}=k$ and $r_{a_j}\geq r_{a_2}\geq t_{b_1}+1\geq t_{b_\ell}$ for any $\ell\in [s]$.
So  $t_{b_1}+t_{b_2}+t_{b_3}<r_{a_1}+r_{a_2}+r_{a_j}=k$, a contradiction. 

So we assume  $r_{a_1}=t_{b_1}$,  $r_{a_2}>t_{b_2}$  and $a_1=b_1$. Notice that $b_2\neq a_1$ and so $r_{b_2}\geq r_{a_2}>0$. 
Since $u$ is minimal, there exist  $j_1$ and $j_2$ such that $j_1$, $j_2$ and $b_2$ are distinct and $r_{b_2}+r_{j_1}+r_{j_2}=k$.  We have $t_{b_2}+t_{j_1}+t_{j_2}\geq k=r_{b_2}+r_{j_1}+r_{j_2}$. Since $a_1=b_1$, we have $r_{b_2}\geq r_{a_2}>t_{b_2}$. So we conclude that $t_{j_1}>r_{j_1}$ or $t_{j_2}>r_{j_2}$. We may assume that $t_{j_1}>r_{j_1}$. Then $j_1=b_d$ for some $d\in [s]\setminus\{1\}$.  
Notice that $t_{b_d}>r_{b_d}\geq r_{a_2}>t_{b_2}\geq t_{b_1}$ and so $t_{b_d}\geq t_{b_2}+2$. Moreover, it follows from $t_{b_d}>t_{b_2}\geq t_{b_1}$ that $t_{b_\ell}=t_{b_d}=k-(t_{b_1}+t_{b_2})$ for any $3\leq\ell\leq s$.
This shows that $z=(x_{b_2,1}v)/x_{b_d,1}\in  I^{(k)}$, and if  $z'$
is a minimal generator satisfying $z'|z$, then
$\deg_{z'}(x_{b_1,1})=\deg_{v}(x_{b_1,1})=t_{b_1}$,
$\deg_{z'}(x_{b_2,1})=\deg_{v}(x_{b_2,1})+1=t_{b_2}+1$,
and $\deg_{z'}(x_{b_\ell,1})\geq \deg_{v}(x_{b_\ell,1})-1=t_{b_\ell}-1=t_{b_d}-1\geq t_{b_2}+1$ for any $3\leq\ell\leq s$. Thus
$f_{z'}>_{\lex} f_v$ and
$z':v=z:v=x_{b_2,1}$. Since $r_{b_2}>t_{b_2}$, we have $x_{b_2,1}| u:v$, and we are done.
  
Next, suppose that     
$r_{a_1}=t_{b_1}$ and $r_{a_2}=t_{b_2}$. We show that $r_{a_\ell}=t_{b_\ell}$ for any $1\leq \ell\leq s$. Suppose on the contrary that $r_{a_\ell}>t_{b_\ell}$ for some $3\leq \ell\leq s$. Then $r_{a_\ell}>t_{b_2}=r_{a_2}$, and so $r_{a_\ell}=k-(r_{a_1}+r_{a_2})=k-(t_{b_1}+t_{b_2})\leq t_{b_\ell}$, a contradiction.
Therefore, $r_{a_\ell}=t_{b_\ell}$ for all $\ell$. So $f_u=f_v$, which implies that
$u>_{\lex}v$.
Let $x_{p,\ell}$ denote the largest variable dividing $u:v$. First assume that $s+1\leq p\leq m$. 
Since
$r_p=s_p$ and $u>_{\lex}v$,
necessarily
$\ell=h_p$ and $h_p<j_p$.
Assume first that
$s_p\le \lfloor k/2\rfloor-1.$
Then
$z=(v/{w_{p,h_p,s_p}})w_{p,h_p,s_p+1}\in  I^{(k)}$.  Choose $z'\in \mathcal{G}(I^{(k)})$ such that $z'|z$. Notice that for any  $s+1\leq i\leq m$ and $j\in [d_i]$ we have $\deg_{z'}(x_{i,j})=\deg_{z}(x_{i,j})$. Thus $f_{z'}>_{\lex}f_v$. Moreover,  
$z':v=z:v=x_{p,h_p}$.
It remains to consider the case
$s_p=\lfloor k/2\rfloor.$
Then $k$ must be odd, otherwise
$\deg_u(x_{p,h_p})=\deg_v(x_{p,h_p}),$
contrary to the choice of $x_{p,h_p}$.
So $k$ is odd and $k-s_p=k-\lfloor k/2\rfloor=\lfloor k/2\rfloor+1$. Take $z=(x_{p,h_p}v)/x_{p,j_p}=(v/w_{p,h_p,s_{p}})w_{p,j_p,s_{p}}$. Notice that $z\in \mathcal{G}(I^{(k)})$. Moreover, from $h_p<j_p$ we obtain $z>_{\lex}v$. Since $f_z=f_v$, we conclude that $z>v$ with $z:v=x_{p,h_p}$.   
Now, suppose that $1\leq p\leq s$, which means that $x_{p,1}|u:v$, $r_p>t_p$, and $r_{\ell}=t_{\ell}$ for $\ell<p$.  Since $\{r_1,\ldots,r_s\}=\{t_1,\ldots,t_s\}$, there exists $q>p$ such that $r_q<t_q$. We show that $t_p<t_q$.  We know that $|\{t_1,\ldots,t_s\}|\leq 3$. 
Since $r_p,r_q\in \{t_1,\ldots,t_s\}$, we have $r_p=t_i$ and $r_q=t_j$ for some $i$ and $j$. By contradiction, suppose that $t_p\geq t_q$. Then $t_i>t_p\geq t_q>t_j$. Since $p$, $q$, $j$ and $i$ are pairwise distinct, we must have $t_i=t_p$, a contradiction. Therefore, $t_p<t_q$. So for any triple $(t_q,t_h,t_\ell)$ satisfying $t_q+t_h+t_\ell=k$ with $q$, $h$ and $\ell$ distinct, we have $p\in\{h,\ell\}$, otherwise, $t_p+t_h+t_\ell<k$ with $p$, $h$ and $\ell$ distinct, which is a contradiction.
This yields that $t_q+t_p+t_d=k$ for some $d\neq p, q$, and that $z=(x_{p,1}v)/x_{q,1}\in I^{(k)}$. Let $z'\in \mathcal{G}(I^{(k)})$ such that $z'|z$, and let $f_{z'}=(t'_{s+1},\ldots,t'_m,t'_{c_1},\ldots,t'_{c_s})$.
We show that either $f_{z'}>_{\lex} f_v$ or $f_{z'}=f_v$ and $z'>_{\lex} v$. 
For any $s+1\leq i\leq m$, there exist $j,h\in [d_i]$ such that $\deg_z(x_{i,j})=t_i$ and $\deg_z(x_{i,h})=k-t_i$. Hence, $t'_i=t_i$. Moreover,  the equality $(t_q-1)+(t_p+1)+t_d=k$ implies that $t'_q=t_q-1$, $t'_p=t_p+1$ and $t'_d=t_d$. If for any  $i\in [s]\setminus \{q,p\}$, $t'_i=t_i$, then $z'=z$ and we have either $f_{z'}>_{\lex} f_v$ or $f_{z'}=f_v$ and $z'>_{\lex} v$. Now,
assume that there exists $i\in [s]\setminus \{q,p\}$ such that $t'_i\neq t_i$. Then $t'_i<t_i$. We show that $t_i=t_q$ and $t_i>t_p+1$. 
There are distinct integers $h,\ell\neq i$ such that $t_i+t_h+t_\ell=k$. If $p\notin\{h,\ell\}$, then $t'_h\leq t_h$, and $t'_\ell\leq t_\ell$, which yields $t'_i+t'_h+t'_\ell<k$, a contradiction. So
we have $p\in\{h,\ell\}$, and we may write $t_i+t_p+t_\ell=k$. If $\ell=q$, then $t'_i+t'_p+t'_q<t_i+t_p+t_q=k$, a contradiction. Thus $\ell\neq q$, and we obtain $t_q+t_p+t_\ell\geq k=t_i+t_p+t_\ell$. This gives $t_q\geq t_i$. Moreover,  
$t_q+t_p+t_d=k\leq t_i+t_p+t_d$, as $d\neq i$. So $t_q\leq t_i$. Therefore, $t_i=t_q$.
Moreover,  since $t_i+t_p+t_\ell=k$, $t'_p=t_p+1$ and $t'_\ell\leq t_\ell$, we must have $t'_i=t_i-1$, otherwise, $t'_i+t'_p+t'_\ell<k$.
Hence, $t'_i=t_i-1$. 
Now, we show that  $t_i>t_p+1$. Suppose, on the contrary, that $t_i\leq t_p+1$. Then $t_q=t_i\leq t_p+1$ and since $t_q>t_p$ we obtain $t_q=t_i=t_p+1$. Then 
$t'_i+t'_q+t'_d=t_p+(t_q-1)+t_d=k-1<k$, a contradiction. We have shown that $t_i>t_p+1$, for any $i\in [s]\setminus \{q,p\}$ with $t'_i<t_i$.  
It follows that $t'_j=t_j$, for any $j\in [s]\setminus \{q,p\}$ with $t_j\leq t_p+1$.
This together with $t'_p=t_p+1$ implies $f_{z'}>_{\lex} f_v$. Note that $z':v=z:v=x_{p,1}$.

\smallskip

\textbf{Case 3}. Suppose
$u=u_{i_1,j_1,r_1,i_2,j_2,r_2}$ and $v=u_{t_1,h_1,s_1,t_2,h_2,s_2}$ are of type \textup{(2)}. 
Without loss of generality, assume
$r_1\le r_2$ and
$s_1\le s_2.$
Then either
$(r_1,r_2)>_{\lex}(s_1,s_2),$
or
$(r_1,r_2)=(s_1,s_2)$ and $u>_{\lex}v.$
Suppose first that
$r_1>s_1.$
Since
$k-r_1\ge k-r_2\ge r_2\ge r_1>s_1$
and
$k-(r_1+r_2)>k/2>s_1,$
it follows that
$x_{t_1,h_1}\mid u:v.$
If $s_1+s_2+1<k/2$, then $z=u_{t_1,h_1,s_1+1,t_2,h_2,s_2}\in \mathcal G(I^{(k)})$ with $f_z>_{\lex}f_v$ and $z:v=x_{t_1,h_1}$. If  $s_1+s_2+1=k/2=\lfloor k/2\rfloor$, then $k$ is even and we set $z=w_{t_1,h_1,s_1+1}w_{t_2,h_2,s_2}\prod_{i\neq t_1,t_2}w_{i,1, k/2}$. Clearly, $z\in \mathcal G(I^{(k)})$, with $z>v$ and $z:v=x_{t_1,h_1}$.
Now, assume that $s_1+s_2=\lfloor k/2\rfloor$. Since $s_1+s_2<k/2$, we conclude that $k$ is odd. 
Moreover, from $s_1\leq s_2$, we obtain $s_1\leq \lfloor k/2\rfloor-1$. Then $z=w_{t_1,h_1,s_1+1}w_{t_2,h_2,s_2}\prod_{i\neq t_1,t_2}w_{i,1, \lfloor k/2\rfloor}\in \mathcal G(I^{(k)})$ is  the desired monomial. 
Next, suppose that
$r_1=s_1$ and $r_2>s_2.$ Then $s_1+s_2<r_1+r_2$, which implies that $s_1+s_2\leq \lfloor k/2\rfloor-1$.  
If
$x_{i_1,j_1}\neq x_{t_1,h_1},$
then
$x_{t_1,h_1}\mid u:v$, and we take	\[
z=
\begin{cases}
	u_{t_1,h_1,s_1+1,t_2,h_2,s_2}, & \text{if $s_1+s_2+1<k/2$}, \\
	w_{t_1,h_1,s_1+1}w_{t_2,h_2,s_2}\prod_{i\neq t_1,t_2}w_{i,1, k/2}, & \text{if $s_1+s_2+1=\lfloor k/2\rfloor=k/2$}.
\end{cases}
\]
Then $z\in \mathcal G(I^{(k)})$ is  the desired monomial. 
Now, let
$x_{i_1,j_1}=x_{t_1,h_1}.$ Since $k-r_2\geq r_2>s_2$, we have $x_{t_2,h_2}\mid u:v$. Moreover, $s_2\leq \lfloor k/2\rfloor-1$. We set
\[
z=
\begin{cases}
	u_{t_1,h_1,s_1,t_2,h_2,s_2+1}, & \text{if $s_1+s_2+1<k/2$}, \\
	w_{t_1,h_1,s_1}w_{t_2,h_2,s_2+1}\prod_{i\neq t_1,t_2}w_{i,1, k/2}, & \text{if $s_1+s_2+1=\lfloor k/2\rfloor=k/2$}.
\end{cases}
\]
As before, $z\in \mathcal G(I^{(k)})$ is  the desired monomial.

It remains to consider the case $(r_1,r_2)=(s_1,s_2)$, so that $u>_{\mathrm{lex}}v$.
We first show that either $x_{t_1,h_1}\mid u:v$ or $x_{t_2,h_2}\mid u:v$. If $s_1=s_2$, then
$\{x_{i_1,j_1},x_{i_2,j_2}\}\neq\{x_{t_1,h_1},x_{t_2,h_2}\}$, otherwise $u=v$. Hence, $\deg_u(x_{t_1,h_1})>s_1$ or $\deg_u(x_{t_2,h_2})>s_2$, which proves the claim. 
Assume now that $s_1<s_2$. If $x_{i_1,j_1}\neq x_{t_1,h_1}$, then $\deg_u(x_{t_1,h_1})>s_1$, and therefore $x_{t_1,h_1}\mid u:v$. Otherwise, $x_{i_1,j_1}=x_{t_1,h_1}$, forcing $x_{i_2,j_2}\neq x_{t_2,h_2}$, whence $x_{t_2,h_2}\mid u:v$. This establishes the claim.
Choose $\ell\in\{1,2\}$ to be the smallest integer such that $x_{t_\ell,h_\ell}\mid u:v$, and let $\{1,2\}\setminus \{\ell\}=\{q\}$. 
Suppose first that $s_1+s_2\le\lfloor k/2\rfloor-1$.   
We set 	\[
z=
\begin{cases}
	u_{t_\ell,h_\ell,s_\ell+1,t_q,h_q,s_q}, & \text{if $s_1+s_2+1<k/2$}, \\
	w_{t_\ell,h_\ell,s_\ell+1}w_{t_q,h_q,s_q}\prod_{i\neq t_1,t_2}w_{i,1, k/2}, & \text{if $s_1+s_2+1=k/2$}.
\end{cases}
\]

Finally, assume that $s_1+s_2=\lfloor k/2\rfloor$. Since $s_1+s_2<k/2$,  $k$ is odd. Moreover, from $s_1\leq s_2$, we obtain $s_1\leq \lfloor k/2\rfloor-1$ and $s_2>0$.
Set 
\[
z=
\begin{cases}
	w_{t_1,h_1,s_1+1}w_{t_2,h_2,s_2}\prod_{i\neq t_1,t_2}w_{i,1, \lfloor k/2\rfloor}, & \text{if $\ell=1$},\\
	w_{t_1,h_1,s_1}w_{t_2,h_2,s_2+1}\prod_{i\neq t_1,t_2}w_{i,1, \lfloor k/2\rfloor}, & \text{if $\ell=2$ and  $s_2\leq \lfloor k/2\rfloor-1$},\\
	(x_{t_2,h_2}v)/x_{i_2,j_2}, & \text{if $\ell=2$ and  $s_2=\lfloor k/2\rfloor$}. 
\end{cases}
\]

Then $z\in \mathcal G(I^{(k)})$ is  the desired monomial. Indeed, if  $\ell=2$ and  $s_2=\lfloor k/2\rfloor$, then $s_1=0$, and by the choice of $\ell$ we obtain $x_{i_1,j_1}=x_{t_1,h_1}$.
In this case, $\deg_u(x_{a,b})=\deg_v(x_{a,b})$ for every variable $x_{a,b}\notin\{x_{i_2,j_2},x_{t_2,h_2}\}$, whereas
$\deg_v(x_{i_2,j_2})=k-(s_1+s_2)=k-s_2=\lfloor k/2\rfloor+1>\lfloor k/2\rfloor=\deg_u(x_{i_2,j_2})$ and $\deg_u(x_{t_2,h_2})=\lfloor k/2\rfloor+1>\lfloor k/2\rfloor=\deg_v(x_{t_2,h_2})$. Since $u>_{\mathrm{lex}}v$, we necessarily have $x_{t_2,h_2}>x_{i_2,j_2}$.
So defining $z=(x_{t_2,h_2}v)/x_{i_2,j_2}$, we have
\[
z=
\begin{cases}
	u_{t_1,h_1,s_1,t_2,j_2,s_2}, & \text{if $t_2=i_2$},\\
	
	u_{t_1,h_1,s_1,i_2,j_2,s_2}, & \text{if $t_2\neq i_2$}. 
\end{cases}
\]

Hence, $z\in \mathcal G(I^{(k)})$  with  $z:v=x_{t_2,h_2}$, while $f_z=f_v$ and $z>_{\mathrm{lex}}v$. Hence $z>v$, completing the proof.
\end{proof}

Thanks to Theorem \ref{LQ of complete multi}, we determine the sequences of depth of symbolic powers of $I_{c}(K_{d_1,\dots,d_m})$ under the assumption that $d_{i}\geq2$ for all $i$. In particular, these sequences show that the depths of symbolic powers of complementary edge ideals are not necessarily non-increasing.  

\begin{prop}\label{depth of complete multi}
Let $G=K_{d_1,\dots,d_m}$ be a complete multipartite graph on $n\geq 3$ vertices. If $d_{i}\geq2$ for all $i$, then we have 
\[
\depth S/I_{c}(G)^{(k)}=
\begin{cases}
n-3, & \text{if } k=1,2,4, \\
n-m-1, & \text{otherwise}. 
\end{cases}
\]
In particular, if $m\geq 3$, then we have $$\depth S/I_{c}(G)^{(3)}=n-m-1<n-3=\depth S/I_{c}(G)^{(4)}.$$
\end{prop}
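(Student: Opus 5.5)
The natural route is through linear quotients. By Theorem~\ref{LQ of complete multi}, $I^{(k)}$ has linear quotients for every $k$ with respect to the explicit order built there. By \cite[Corollary 8.2.2]{HHBook} this gives $\pd S/I^{(k)}=1+\max_{u}|\set(u)|$, where the maximum runs over $u\in\mathcal{G}(I^{(k)})$. So $\depth S/I^{(k)}=n-1-\max_u|\set(u)|$, and we must show that $\max_u|\set(u)|$ equals $2$ for $k\in\{1,2,4\}$ and equals $m$ otherwise.

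The lower bound $\depth\ge n-m-1$ for all $k$ is already Theorem~\ref{limDepth}, since $s=0$. Hence $|\set(u)|\le m$ always, and for $k\notin\{1,2,4\}$ it suffices to exhibit a single generator $v$ with $|\set(v)|\ge m$. Under $d_i\ge2$ for all $i$, every generator is either of type (1), $v=\prod_i w_{i,h_i,t_i}$ with $t_i\le k/2$ and $t_i+t_j+t_h\ge k$, or of type (2). For $k=3$ or $k\ge5$, I would take a type (1) generator $v$ with every $t_i\ge1$ and every $t_i<\lfloor k/2\rfloor$ or so, chosen so that for each part $i$ the monomial $z_i=(v/w_{i,h_i,t_i})w_{i,h_i,t_i+1}$ (after passing to a minimal divisor) is larger than $v$ and $z_i:v=x_{i,h_i}$. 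This gives $m$ distinct variables in $\set(v)$.

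\begin{itemize}
\item For $k=3$ one can instead use $v=\prod_i w_{i,h_i,1}$ with $h_i$ chosen lex-last. Each $x_{i,h_i}$ then lies in $\set(v)$ via swaps $h_i\to h_i'$ of equal $f$-vector and larger lex, so $|\set(v)|\ge m$. Alternatively one can check this directly with the degree-three generators of Theorem~\ref{multipartitre-Rees}.
\item For $k\ge5$ the room $\lfloor k/2\rfloor\ge2$ lets one take $t_i$ just below $k/2$ with a consistent choice. I would handle even and odd $k$ separately, using the membership conditions (i),(ii) before Lemma~\ref{k-cover}.
\end{itemize}

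For $k\in\{1,2,4\}$ we need the upper bound $|\set(u)|\le2$ for every generator, and this is the main obstacle.

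\begin{itemize}
\item For $k=1$, $\depth S/I=n-3$ is known: $I^\vee=I(G^c)$ is the edge ideal of a disjoint union of cliques, hence of regularity $2$, so Terai's formula gives $\pd S/I=3$ when $G$ has a triangle. This needs $m\ge3$; the case $m=2$ would be treated separately via Theorem~\ref{min prime}.
\item For $k=2$ and $k=4$ the generators are very restricted. The conditions $t_i\le k/2$ and $\sum t\ge k$ over triples force $t_i\in\{1\}$ for type (1) when $k=2$, and $t_i=2$ (with at most one exception $\ge1$) when $k=4$. Type (2) generators have $r_1+r_2<k/2$.
\end{itemize}

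For these small $k$ I would go case by case through the colon computations in the proof of Theorem~\ref{LQ of complete multi}, showing that each $J_u:u$ is generated by at most two variables, namely $x_{i_1,j_1},x_{i_2,j_2}$ for type (2) and a swap variable within one part for type (1). Care is needed because larger generators can differ from $u$ in many parts, so the bound $2$ must come from the rigidity of the exponents. If this bookkeeping becomes too heavy, a fallback is to exploit the absence of degree-four generators in $\mathcal{R}_s(I)$ (Theorem~\ref{multipartitre-Rees}). That gives $I^{(4)}=(I^{(2)})^2+I\,I^{(3)}+I^2I^{(2)}+\cdots$, and one could try to compare depths through polarization, or through the Alexander dual regularity as in the case $k=1$.

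The final inequality for $m\ge3$ is then immediate, since $n-m-1<n-3$ when $m>2$.
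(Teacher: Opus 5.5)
The case $k\notin\{1,2,4\}$ is essentially the paper's argument. The case $k\in\{1,2,4\}$, which produces the non-monotonicity, is not proved, and several facts you cite to support it are wrong.

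For $k=2$, the constraints $t_i\le 1$ and $t_i+t_j+t_h\ge 2$ allow one $t_i=0$. For $k=4$, the vectors $(1,1,2,\dots,2)$ and $(0,2,\dots,2)$ are admissible. For $t=(1,1,2,\dots,2)$, both $x_{1,h_1}$ and $x_{2,h_2}$ lie in $\set(v)$ (raise $t_1$ or $t_2$ to $2$), so $\set(v)$ is not \emph{a swap variable within one part}.

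The observation you are missing is already contained in Case 2 of the proof of Theorem~\ref{LQ of complete multi}. Let $v=\prod_i w_{i,h_i,t_i}$ be a type (1) generator. Every $u>v$ is again of type (1), and $u:v$ is divisible by some $x_{p,h_p}$ with $t_p<k/2$ that itself lies in $J_v:v$. Hence $\set(v)\subseteq\{x_{p,h_p}:\ t_p<k/2\}$. Likewise, Cases 1 and 3 give $\set(v)\subseteq\{x_{i_1,h_1},x_{i_2,h_2}\}$ for type (2). The bound $|\set(v)|\le 2$ for $k=2,4$ is then just a count: at most one index has $t_p<1$ when $k=2$, and at most two have $t_p<2$ when $k=4$.

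You also never address the reverse inequality $\depth S/I^{(k)}\le n-3$ for $k=2,4$. The paper gets it from the triangle prime $P=(x_{1,1},x_{2,1},x_{3,1})\in\Min(I^{(k)})$, which gives $\depth S/I^{(k)}\le\dim S/P=n-3$.

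Your $k=1$ argument is also incorrect. By Theorem~\ref{min prime}, for $m\ge 3$ the Alexander dual is $I^\vee=I(G^c)+(\mathbf{x}_T:\ T\in\mathcal{T}(G))$, not $I(G^c)$. Moreover, $I(G^c)$, the edge ideal of $m$ disjoint cliques of size at least $2$, has regularity $m+1$, not $2$. So Terai's formula as you invoke it yields nothing. The easy fix is that $I$ is squarefree and generated in degree $n-2$, so $\set(v)\subseteq\{x_{i,a},x_{j,b}\}$ for $v=\mathbf{x}/(x_{i,a}x_{j,b})$.

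Two smaller points remain.
\begin{itemize}
\item The type (1)/(2) description and the explicit order exist only for $m\ge 3$. The case $m=2$, where both formulas read $n-3$, needs the separate argument $I^{(k)}=J(K_{d_1})^{(k)}J(K_{d_2})^{(k)}$. This is a product, in disjoint sets of variables, of two ideals of projective dimension $1$.
\item For odd $k$, your default choice $t_i<\lfloor k/2\rfloor$ is impossible when $k=5,7$, because triples must sum to at least $k$. There you need $t_i=\lfloor k/2\rfloor$ together with the lex swap $w_{i,1,t_i}\mapsto w_{i,2,t_i}$, with the small exponent placed on $x_{i,1}$ so that the swapped generators are larger. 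This is exactly the paper's uniform choice $r_i=\lceil k/2\rceil-1$.
\end{itemize}
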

\begin{proof}
Set $I=I_{c}(G)$. First, we suppose that $m=2$. As shown in the proof of Theorem \ref{LQ of complete multi}, we have $I^{(k)}=J(K_{d_{1}})^{(k)}J(K_{d_{2}})^{(k)}$, where $K_{d_{i}}$ denotes the complete graph on the vertex set $X_{i}=\{x_{i,\ell}\,\,:\ell\in[d_{i}]\}$ for each $i$. Notice that the ideal $S/J(K_{d_{i}})^{(k)}$ is Cohen--Macaulay of dimension $n-2$ (see \cite[Theorem 2.1]{v2011}). Hence, 
for each $i=1,2$, $\pd J(K_{d_{i}})^{(k)}=1$, and thus, $\pd I^{(k)}=2$, which implies that $\depth S/I^{(k)}=n-3=n-m-1$. From now on, we assume that $m\geq3$. We keep the notation as in the proof of Theorem~\ref{LQ of complete multi}, and let $>$ be the admissible order given there. 
Let $v=\prod_{1\leq i\leq m}w_{i,h_{i},t_{i}}$ be a generator of type (1). The proof of Case 2 in Theorem \ref{LQ of complete multi} shows that only the indices of the variables $x_{i,h_{i}}$ with $t_{i}<k/2$ can belong to $\set_{I^{(k)}}(v)$. Therefore, we obtain $$|\set_{I^{(k)}}(v)|\leq|\{i\,\,: t_{i}<k/2\}|\leq m.$$ Next, let $v=u_{i_{1},h_{1},t_{1},i_{2},h_{2},t_{2}}$ be generator of type (2). Cases 1 and 3 show that only the indices of $x_{i_{1},h_{1}}$ and $x_{i_{2},h_{2}}$ can belong to $\set_{I^{(k)}}(v)$. Hence, we obtain $|\set_{I^{(k)}}(v)|\leq2$. Thus by \cite[Corollary 8.2.2]{HHBook} and the Auslander--Buchsbaum formula, $\depth S/I^{(k)}\geq n-m-1$.  Now, suppose that $k\neq 1,2,4$. We need to show that $\depth S/I^{(k)}\leq n-m-1$ or equivalently, $\pd I^{(k)}\geq m$.  Let $u=\prod_{i=1}^m w_{i,1,r_i}$ be a minimal generator of type (1) defined by $r_i=\lceil k/2\rceil-1$ for any $i\in[m]$. Since $k\neq 1,2,4$, it is seen that $u$ is indeed a minimal generator of $I^{(k)}$. Fix an integer $\ell\in[m]$. If $k$ is odd, then we set $u_\ell=w_{\ell,2,r_\ell}\prod_{i=1,i\neq \ell}^m w_{i,1,r_i}$ and if $k$ is even we set $u_\ell=w_{\ell,1,k/2}\prod_{i=1,i\neq \ell}^m w_{i,1,r_i}$. It is easily seen that $u_\ell>u$. Moreover, $u_\ell:u=x_{\ell,1}$ for all $\ell\in[m]$. Therefore, $\{x_{1,1},\ldots,x_{m,1}\}\subseteq \set_{I^{(k)}}(u)$, and so $\pd I^{(k)}\geq m$, which completes the proof for the case $k\neq 1,2,4$. 

We now consider $k=1,2,4$. 
For $k=1$, every minimal generator of $I$ is of the form $v={\bf x}/(x_{i,a}x_{j,b})$ with $i\neq j$, where ${\bf x}={\bf x}_{1}\cdots{\bf x}_{m}$. Since $I$ is squarefree, it is easily seen that $\set_{I}(v)\subseteq \{x_{i,a},x_{j,b}\}$ and so $|\set_{I}(v)|\leq 2$. Moreover, since $d_i\geq 2$ for all $i$,  the monomials $v={\bf x}/(x_{1,1}x_{2,1})$,  $u=x_{1,1}(v/x_{1,2})$ and $w=x_{2,1}(v/x_{2,2})$ are minimal generators of $I$ with $u:v=x_{1,1}$ and $w:v=x_{2,1}$ and $u,w>v$. Hence, $|\set_{I}(v)|=2$. This proves the claim for $k=1$.
Let $k=2$. For a generator of type (1), each $t_{i}$ is either 0 or 1, and $t_{i}+t_{j}+t_{h}\geq2$ for any pairwise distinct indices $i,j,h$. Thus, $t_{i}=0$ for at most one index $i$. By the fact that $|\set_{I^{(k)}(v)}|\leq|\{i\,\,: t_{i}<k/2\}|$, we have $|\set_{I^{(2)}}(v)|\leq1$ for a generator $v$ of type (1). Also, since $|\set_{I^{(k)}}(v)|\leq2$, we see that $\max_{v\in\mathcal{G}(I^{(2)})}|\set_{I^{(2)}}(v)|\leq2$. Let $k=4$. For a generator of type (1), each $t_{i}\in\{0,1,2\}$. We claim that there are at most two indices $i$ such that $t_{i}<2$. Indeed, suppose that there exist pairwise distinct indices $i,j,h$ such that $t_{i}, t_{j}, t_{h}<2$. Since $t_{i}, t_{j}, t_{h}\in\{0,1\}$, we have $t_{i}+t_{j}+t_{h}\leq3$, a contradiction. Hence, we obtain $\max_{v\in\mathcal{G}(I^{(4)})}|\set_{I^{(4)}}(v)|\leq2$. Consider the prime ideal $P=(x_{1,1},x_{2,1},x_{3,1})$. Then we see that $P\in\Min(I)=\Min(I^{(k)})$, which implies that $\depth S/I^{(k)}\leq n-3$. From $\max_{v\in\mathcal{G}(I^{(k)})}|\set_{I^{(k)}}(v)|=2$ for $k=1,2,4$, we get $\depth S/I^{(k)}=n-3$ for $k=1,2,4$, which completes the proof. 
\end{proof}

We conclude the paper with the following description of the Waldschmidt constant.

\begin{prop}
\label{Waldschmit-constant}
Let $G=K_{d_1,\dots,d_m}$ be a complete multipartite graph on $n\geq 3$ vertices, and let $I=I_c(G)$. Set $s=|\{i\,\,: d_{i}=1\}|$.  Then 
 $$
        \widehat{\alpha}(I)=\begin{cases}
            \phantom{(} n/2,&\textit{if}\ s=0,\\
            \phantom{(}  n/3,&\textit{if}\ s=m,\\
            (n-1)/2,&\textit{if}\  s<m,\, 1\leq s\leq 3,\\
            (3n-s)/6,&\textit{if}\ 4\leq s<m.
        \end{cases}
        $$
\end{prop}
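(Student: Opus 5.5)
The plan is to treat $\widehat{\alpha}(I)$ as the value of a linear program, and to compute that value by exhibiting explicit feasible points for the upper bound and proving matching lower bounds. By \eqref{sym-comp} and Theorem~\ref{min prime}, a monomial ${\bf x}^{\bf a}$ lies in $I^{(k)}$ exactly when ${\bf a}/k$ satisfies the following constraints:
\begin{enumerate}
\item[\textup{(1)}] $y_u+y_v\ge 1$ for any two distinct vertices $u,v$ in the same part;
\item[\textup{(2)}] $y_u+y_v+y_w\ge 1$ for any three pairwise distinct vertices.
\end{enumerate}
Let $\lambda$ denote the minimum of $\sum y_u$ over all nonnegative rational $y$ satisfying (1) and (2). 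Two facts then give $\widehat{\alpha}(I)=\lambda$:
\begin{enumerate}
\item[\textup{(a)}] If ${\bf x}^{\bf a}\in I^{(k)}$, then ${\bf a}/k$ is feasible, so $\alpha(I^{(k)})/k\ge\lambda$ for every $k$.
\item[\textup{(b)}] If $y$ is an optimal rational solution, then $ky$ is integral for $k$ divisible by a suitable denominator (here $6$ suffices), and ${\bf x}^{ky}\in I^{(k)}$. Hence $\alpha(I^{(k)})\le k\lambda$ along that subsequence, and the limit equals $\lambda$.
\end{enumerate}
So it remains to compute $\lambda$ in each of the four cases.

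\textbf{Upper bounds.} These come from explicit feasible points. Put $y=1/2$ on every vertex of a part with $d_i\ge2$. For the singleton parts:
\begin{enumerate}
\item[\textup{(i)}] If $s=m$, put $y=1/3$ everywhere; the total is $n/3$.
\item[\textup{(ii)}] If $s=1$, put $0$ on the singleton.
\item[\textup{(iii)}] If $s=2$, put $1/4$ on each singleton.
\item[\textup{(iv)}] If $s=3$, put $1/3$ on each singleton.
\item[\textup{(v)}] If $4\le s<m$, put $1/3$ on each singleton.
\end{enumerate}
Checking (1) and (2) is immediate. In (ii)–(iv), every triple either contains a $1/2$-vertex together with two other vertices of total weight $\ge1/2$, or consists of three singletons. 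In (ii)–(iv) the total is $(n-s)/2+(s-1)/2=(n-1)/2$, and in (v) it is $(n-s)/2+s/3=(3n-s)/6$. If $s=0$, the total is $n/2$.

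\textbf{Lower bounds.} The first basic estimate is that for a part with $d_i\ge2$, summing (1) over all pairs in the part gives $(d_i-1)\sum_{u\in\text{part}}y_u\ge\binom{d_i}{2}$. Hence each such part contributes at least $d_i/2$, and the non-singleton vertices contribute at least $(n-s)/2$. The second basic estimate is that averaging (2) over all triples of singletons shows the singletons contribute at least $s/3$ when $s\ge3$. These two estimates settle $s=0$, $s=m$ (average (2) over all triples of $[n]$), $s=1$, and $4\le s<m$.

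The main obstacle is $s\in\{2,3\}$, where the needed singleton contribution $(s-1)/2$ exceeds what the crude estimates give. Here I would let $v$ be a non-singleton vertex with minimum value $y_v$, and write $N=\sum_{u \text{ non-singleton}}y_u$. The plan is to show $N\ge (n-s)/2+\max\{0,(n-s)(y_v-1/2)\}$ and to pair this with a singleton estimate that involves $y_v$:
\begin{enumerate}
\item[\textup{(i)}] For $s=2$, constraint (2) on $\{p,q,v\}$ gives $y_p+y_q\ge 1-y_v$.
\item[\textup{(ii)}] For $s=3$, constraint (2) on the three singletons gives their sum $\ge 1$, and the three constraints $\{p,q,v\}$ give $2\sum y_{\rm sing}\ge 3(1-y_v)$.
\end{enumerate}
Combining these, and using $n-s\ge2$, yields a total of at least $(n-s)/2+(s-1)/2=(n-1)/2$ whether $y_v\le1/2$ or $y_v>1/2$. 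I expect the delicate point to be the regime $y_v>1/2$ when $s=3$, which needs the bound $N\ge (n-s)y_v$. If that bookkeeping fails, I would fall back on writing down an explicit dual LP certificate built from the pair constraints within parts and the triples containing two singletons.
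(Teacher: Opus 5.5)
Your argument is correct, but it takes a different route from the paper. The paper never passes to a linear program. It reuses the classification of minimal generators of $I^{(k)}$ into the two types found in the proof of Theorem~\ref{LQ of complete multi} (via Lemma~\ref{k-cover}). It then minimizes the degree over each type to sandwich $\alpha(I^{(k)})$ between floor/ceiling expressions such as $n\lfloor k/2\rfloor$ and $n\lceil k/2\rceil$, and lets $k\to\infty$.

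You instead identify $\widehat{\alpha}(I)$ with the fractional covering value of the minimal primes; your facts (a) and (b) amount to a self-contained proof of this standard identity. You then compute that value from explicit feasible points together with averaging lower-bound certificates. Your route is shorter, does not depend on the delicate generator classification, and avoids the floor/ceiling bookkeeping in the paper's Cases 3 and 4. The paper's route, in exchange, gives two-sided bounds on $\alpha(I^{(k)})$ for each individual $k$ from machinery it has already built.

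Three small points to tidy:
\begin{enumerate}
\item With your choice $y=1/4$ on the singletons when $s=2$, the common denominator is $4$, so ``$6$ suffices'' should read ``$12$ suffices''. Alternatively, use the equally optimal point $y_p=0$, $y_q=1/2$.
\item Fact (b) only controls a subsequence, so you should justify that the limit exists. For example, rounding $ky$ up coordinatewise preserves all constraints, which gives $k\lambda\le\alpha(I^{(k)})\le k\lambda+n$ for every $k$.
\item The case $s=3$ is not actually an obstacle. Your averaging estimate already gives singleton sum $\ge s/3=1=(s-1)/2$, so the analysis via the minimal non-singleton value $y_v$ is needed only for $s=2$. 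There it works as you describe, since $n\ge 4$ makes $(n-3)y_v+1\ge (n-1)/2$ whenever $y_v\ge 1/2$.
\end{enumerate}
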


\begin{proof}
As was shown in the proof of Theorem~\ref{LQ of complete multi}, any minimal monomial generator of $I^{(k)}$ has one of the following forms:
\begin{enumerate}
	\item
	$u=\prod_{i=1}^{m}w_{i,j_i,r_i}$,
	where $j_i\in[d_i]$, $0\le r_i\le k/2$ for all $i$, $r_i+r_j+r_h\ge k$
	whenever $i,j,h$ are distinct, and for every $i\in [s]$ with $r_i>0$, there exist distinct $j,k\in [m]\setminus \{i\}$ such that $r_i+r_j+r_h=k$.
	
	\item
	$u=u_{i_1,j_1,r_1,i_2,j_2,r_2}
	=
	w_{i_1,j_1,r_1}
	w_{i_2,j_2,r_2}
	\prod_{i\neq i_1,i_2}
	\mathbf{x}_i^{\,k-(r_1+r_2)}$,
	where $i_1\neq i_2$, $j_t\in[d_{i_t}]$, and
	$r_1+r_2<k/2$.
\end{enumerate}

For any $0\le r_i\le k/2$, we set $p_i=r_i+(d_i-1)(k-r_i)=\deg(w_{i,j_i,r_i})$.  

\smallskip

\textbf{Case 1}. Suppose $s=0$. Then $d_i\geq 2$ for all $i$, and $p_i=k+(d_i-2)(k-r_i)$ is minimal precisely when $r_i=\lfloor k/2\rfloor$. So
for every generator $u$ of the form (1), the value $\deg(u)=\sum_{i=1}^m p_i$ is minimal precisely when  $r_i=\lfloor k/2\rfloor$ for all $i$. In this case $\deg(u)=m\lfloor k/2\rfloor+(n-m)\lceil k/2\rceil$. Hence,  $n\lfloor k/2\rfloor\leq \deg(u)\leq n\lceil k/2\rceil$. Now, let $u$ be of type (2). Then we have
\begin{equation*}
\begin{aligned}
\deg(u) &\geq p_1+p_2+(k-(r_1+r_2))+\lceil k/2\rceil\Bigl(\sum_{i=3}^{m} d_i-1\Bigr) \\
&\geq (d_1+d_2-4)\lceil k/2\rceil+3k-(r_1+r_2)+\Bigl(\sum_{i=3}^{m} d_i-1\Bigr)\lceil k/2\rceil \\
&\geq (n-5)\lceil k/2\rceil+2k+\lceil k/2\rceil \geq n\lfloor k/2\rfloor.
\end{aligned}
\end{equation*}
This shows that
$n\lfloor k/2\rfloor\leq \alpha(I^{(k)})\leq n\lceil k/2\rceil$, and so 
$\widehat{\alpha}(I)=n/2$.

\smallskip

\textbf{Case 2}. Suppose that $s=m$ that is $d_i=1$ for all $i$. Then $G$ is the complete graph on $[n]$. Consider a minimal $k$-cover ${\bf a}=(a_1,\ldots,a_n)$ of $I$. Writing the exponents of ${\bf a}$ in an increasing order we may assume that $a_{i_1}\leq \cdots\leq a_{i_n}$. The minimality of ${\bf a}$ implies that $a_{i_1}+a_{i_2}+a_{i_3}=k$ and $a_{i_j}=a_{i_3}$ for any $4\leq j\leq n$. Hence, $a_{i_j}=a_{i_3}\geq k/3$, and $u={\bf x}^{\bf a}$ has the minimal degree, precisely when $a_{i_j}=\lceil k/3\rceil$ for any $j\geq 3$. In this case $\deg(u)=k+(n-3)\lceil k/3\rceil$.  Hence, $\alpha(I^{(k)})=k+(n-3)\lceil k/3\rceil$ and $\widehat{\alpha}(I)=n/3$. 
\smallskip

\textbf{Case 3}. Suppose that  $s<m$ and $1\leq s\leq 3$.
We show that $$(n-1)\lfloor k/2\rfloor\leq \alpha(I^{(k)})\leq (n-1)\lceil k/2\rceil,$$ which proves the claim.
Let $d_1=\cdots=d_s=1$ and $d_{s+1},\ldots,d_m\geq 2$. For any generator $u$ of type (2), since $k-(r_1+r_2), k-r_1,k-r_2\geq \lceil k/2\rceil$, we have $\deg(u)\geq (n-3)\lceil k/2\rceil+k\geq (n-1)\lfloor k/2\rfloor$. Now, let $u$ be of type (1). First assume that $s=1$. Then $\deg(u)=r_1+\sum_{i=2}^m p_i\geq (m-1)\lfloor k/2\rfloor+(n-m)\lceil k/2\rceil\geq (n-1)\lfloor k/2\rfloor$. So $\alpha(I^{(k)})\geq (n-1)\lfloor k/2\rfloor$. On the other hand, setting $r_i=\lfloor k/2\rfloor$ for any $2\leq i\leq m$ and $r_1=k-2\lfloor k/2\rfloor$, we obtain a minimal generator $u$ of type (1) of degree $(m-3)\lfloor k/2\rfloor+(n-m)\lceil k/2\rceil+k$, which shows that $\alpha(I^{(k)})\leq (n-1)\lceil k/2\rceil$, as desired. Next, assume that $s=2$ and $u$ is of type (1). We have $r_1+r_2\geq k-r_3\geq \lceil k/2\rceil$. Hence, $\deg(u)= (r_1+r_2)+\sum_{i=3}^m p_i\geq \lceil k/2\rceil+(m-2)\lfloor k/2\rfloor+(n-m)\lceil k/2\rceil\geq (n-1)\lfloor k/2\rfloor$. So $\alpha(I^{(k)})\geq (n-1)\lfloor k/2\rfloor$. Now, choosing $r_1$ and $r_2$ such that $r_1+r_2=\lceil k/2\rceil$ and $r_1,r_2\geq k-2\lfloor k/2\rfloor$, and setting $r_i=\lfloor k/2\rfloor$ for any $3\leq i\leq m$, we obtain a minimal generator $u$ with $\deg(u)=\lceil k/2\rceil+(m-2)\lfloor k/2\rfloor+(n-m)\lceil k/2\rceil$. Therefore, $\alpha(I^{(k)})\leq (n-1)\lceil k/2\rceil$, and we are done.
Finally, assume that $s=3$ and $u$ is of type (1). We have $r_1+r_2+r_3\geq k$. Hence, $\deg(u)=(r_1+r_2+r_3)+\sum_{i=4}^m p_i\geq k+(m-3)\lfloor k/2\rfloor+(n-m)\lceil k/2\rceil\geq (n-1)\lfloor k/2\rfloor$. Thus, $\alpha(I^{(k)})\geq (n-1)\lfloor k/2\rfloor$. Now, choose $r_1,r_2$ and $r_3$ so that $r_1+r_2+r_3=k$ and $r_1,r_2,r_3\geq k-2\lfloor k/2\rfloor$, and set $r_i=\lfloor k/2\rfloor$ for any $4\leq i\leq m$. Then $u$ is a minimal generator with $\deg(u)=k+(m-3)\lfloor k/2\rfloor+(n-m)\leq (n-1) \lceil k/2\rceil$, which yields $\alpha(I^{(k)})\leq (n-1) \lceil k/2\rceil$.  

\smallskip

\textbf{Case 4}. Suppose $4\leq s<m$ with  $d_1=\cdots=d_s=1$ and $d_{s+1},\ldots,d_m\geq 2$.
We show that $$(n-s/3)\lfloor k/2\rfloor\leq \alpha(I^{(k)})\leq (3n-s)\lceil k/6\rceil.$$ This then yields the desired equality.
For any generator $u$ of type (2), we have $\deg(u)\geq (n-3)\lceil k/2\rceil+k\geq (n-1)\lfloor k/2\rfloor\geq  (n-s/3)\lfloor k/2\rfloor$. Now, let $u$ be of type (1). We may assume that $r_1\leq\cdots\leq r_m$. As was shown in the proof of Theorem~\ref{LQ of complete multi}, we have $r_i=r_3$ for and $4\leq i\leq s$. Since $r_1+r_2+r_3\geq k$, and $r_1,2_2\leq r_3$, we have $r_i=r_3\geq k/3$ for any $4\leq i\leq s$. This shows that 
$\deg(u)\geq k+(s-3)(k/3)+(m-s)\lfloor k/2\rfloor+(n-m)\lceil k/2\rceil\geq (s/3+n-s)\geq (2s/3)\lfloor k/2\rfloor+(n-s)\lfloor k/2\rfloor=(n-s/3)\lfloor k/2\rfloor$. This yields that $\alpha(I^{(k)})\geq (n-s/3)\lfloor k/2\rfloor$. Now, consider a generator $u$ of type (1) defined by $r_i=\lfloor k/2\rfloor$ for any $s+1\leq i\leq m$, $r_i=\lceil k/3\rceil$ for any $3\leq i\leq s$, and $r_1+r_2=k-r_3$ with $r_j\geq k-2\lfloor k/2\rfloor$ for $j\in[2]$. 
Then $u$ is a minimal generator with $\deg(u)=k+(s-3)\lceil k/3\rceil+(m-s)\lfloor k/2\rfloor+(n-m)\lceil k/2\rceil\leq (3n-s)\lceil k/6\rceil$.
Thus we obtain $\alpha(I^{(k)})\leq (3n-s)\lceil k/6\rceil$, which completes the proof.
\end{proof}


\section*{Acknowledgement}
The research of A. Ficarra was supported by the Grant JDC2023-051705-
I funded by MICIU/AEI/10.13039/501100011033 and by the FSE+ and also by
INDAM (Istituto Nazionale Di Alta Matematica).
The research of Y. Muta was partially supported by ohmoto-ikueikai and JST SPRING Japan Grant Number JPMJSP2126. 



\end{document}